\newtheorem{thm}{Theorem}[section]
\newtheorem{lem}[thm]{Lemma}
\newtheorem{cor}[thm]{Corollary}
\newtheorem{prop}[thm]{Proposition}
\theoremstyle{definition}
\theoremstyle{remark}
\newtheorem*{remark}{Remark}
\newtheorem*{acknowledgements}{Acknowledgements}
\DeclareMathOperator{\id}{id}
\DeclareMathOperator{\Mod}{mod}
\DeclareMathOperator{\re}{Re}
\DeclareMathOperator{\im}{Im}
\DeclareMathOperator{\Aut}{Aut}
\DeclareMathOperator{\length}{length}
\DeclareMathOperator{\teich}{Teich}
\DeclareMathOperator{\axis}{axis}
\newcommand{\R}{\mathbb{R}}
\newcommand{\D}{\mathbb{D}}
\newcommand{\Co}{\mathbb{C}}
\newcommand{\CHAT}{\widehat{\mathbb{C}}}
\newcommand{\Z}{\mathbb{Z}}
\newcommand{\N}{\mathbb{N}}
\newcommand{\Sp}{\mathbb{S}}
\begin{document}

\title[Convergence of twist coordinates]{Conformal grafting and convergence of Fenchel-Nielsen twist coordinates}

\author{Maxime Fortier Bourque}
\thanks{Research partially supported by the Natural Sciences and Engineering Research Council of Canada.}
\address{Department of Mathematics, The Graduate Center, City University of New York, New York, NY, USA}
\email{maxforbou@gmail.com}

\keywords{Teichm\"uller space, Fenchel-Nielsen coordinates, hyperbolic metric, geometric convergence, distortion theorem}
\subjclass[2010]{30F60 (Primary) 30F45 (Secondary)}

\begin{abstract}
We cut a hyperbolic surface of finite area along some analytic simple closed curves, and glue in cylinders of varying moduli. We prove that as the moduli of the glued cylinders go to infinity, the Fenchel-Nielsen twist coordinates for the resulting surface around those cylinders converge.
\end{abstract}

\maketitle

\tableofcontents

\section{Introduction}

Let $S$ be a hyperbolic Riemann surface with finite area. We want to perform a surgery on $S$ which we call \textit{conformal grafting}. Let $E$ be an analytic multicurve in $S$, i.e. a set of disjoint, non-parallel, non-peripheral, essential, simple, closed, parametrized analytic curves. For every vector $t\in (\R_{\geq 0})^E$, we construct a new surface $S_t$ as follows. We first cut $S$ along $E$ and then, for each curve $\alpha\in E$, we glue a cylinder of modulus $t_\alpha$ to $S\setminus E$ using the parametrization $\alpha$ on each boundary circle.

This construction recovers some well-studied paths in Teichm\"uller space. If each component of $E$ is geodesic for the hyperbolic metric on $S$, then for every $t$ the set $\{\, S_{\lambda t} \mid \lambda \geq 0 \, \}$ is called a grafting ray \cite{Dumas}. By taking the multicurve $E$ to be the set of core curves for a Jenkins-Strebel quadratic differential, we obtain a Strebel ray, which is a special kind of geodesic for the the Teichm\"uller metric \cite{Masur}. 

From the point of view of hyperbolic geometry, the effect of conformal grafting is to pinch the surface $S$ along the multicurve $E$. Let $S_\infty$ be the surface constructed by gluing a pair of half-infinite cylinders to $S\setminus E$ for each $\alpha\in E$. In the hyperbolic metric, these half-infinite cylinders are cusp neighborhoods. Thus we've replaced each curve in $E$ by a pair of cusps. For grafting rays \cite{Hensel} and Strebel rays \cite{Masur} it is known that $S_t$ looks more and more like $S_\infty$ in the hyperbolic metric at $t$ increases. We prove this in general.

\begin{prop} \label{geomconv}
The surface $S_t$ converges geometrically to $S_\infty$ as $t \to \infty^E$.
\end{prop}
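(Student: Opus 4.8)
The plan is to prove pointed geometric convergence directly, by showing that the hyperbolic metrics of $S_t$ and $S_\infty$ become arbitrarily close on any fixed compact piece. Fix a basepoint $o\in S\setminus E$; as $S\setminus E$ is a common subsurface of every $S_t$ and of $S_\infty$, this is a point of all of them. It is enough to show that for every $R>0$ and $\epsilon>0$, once all the $t_\alpha$ are large the identity on $S\setminus E$ extends to a $(1+\epsilon)$-bilipschitz embedding of the metric ball $B_{S_\infty}(o,R)$ into $S_t$ whose image contains $B_{S_t}(o,R/(1+\epsilon))$. Thus the whole statement reduces to the comparison $(1-\epsilon)\rho_{S_\infty}\le\rho_{S_t}\le(1+\epsilon)\rho_{S_\infty}$, uniformly on a prescribed compact subset of $S_\infty$ once the moduli are large (by elliptic regularity for the curvature equation $\Delta\log\rho=\rho^2$ this even upgrades to $C^\infty$ closeness, but only the bilipschitz form is needed).

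For the upper bound I would exhaust $S_\infty$ by the compact subsurfaces $Y_M=(S\setminus E)\cup\bigcup_{\alpha\in E}(\text{collars of modulus }M)$, where the collar along a boundary curve of $S\setminus E$ is the initial modulus-$M$ sub-cylinder of the half-infinite cylinder glued there. When $t_\alpha>2M$ for every $\alpha$, the modulus-$t_\alpha$ cylinder used to build $S_t$ contains two disjoint modulus-$M$ sub-cylinders, one on each side, so $Y_M$ embeds conformally into $S_t$, identically on $S\setminus E$ and compatibly with the inclusion $Y_M\hookrightarrow S_\infty$. By Schwarz--Pick, $\rho_{S_t}\le\rho_{Y_M}$ on $Y_M$; and since $S_\infty$ is hyperbolic of finite type with $\bigcup_M Y_M=S_\infty$, the metrics $\rho_{Y_M}$ decrease to $\rho_{S_\infty}$ locally uniformly. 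Choosing $M$, and then the $t_\alpha$, large gives the upper bound on any compact set.

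The lower bound is the heart of the matter, since $S_t$ and $S_\infty$ are not homeomorphic and neither embeds conformally in the other. I would deduce it from the generalized (Yau) Schwarz lemma: a \emph{complete} conformal metric $\sigma$ on $S_t$ with Gauss curvature $\ge-1$ satisfies $\rho_{S_t}\ge\sigma$. Given a target compact set inside some $Y_{M_0}$, I would build such a $\sigma_t$ equal to $\rho_{S_\infty}$ on $Y_{M_0}$ by the following surgery: fix $L>M_0$ large enough that $\rho_{S_\infty}$ is within $\epsilon$ of the standard cusp metric $\tfrac1s|dz|$ at depth $L$ in each cusp (possible by the cusp asymptotics), set $\sigma_t=\rho_{S_\infty}$ out to depth $L$ in each cylinder, set $\sigma_t=\tfrac{(\pi/t_\alpha)}{\sin(\pi s/t_\alpha)}|dz|$ — the complete hyperbolic metric of the open finite cylinder, a short geodesic core capped by two cusps — on the middle portion $s\in[L+1,\,t_\alpha-L-1]$ (note that $\tfrac{(\pi/t_\alpha)}{\sin(\pi s/t_\alpha)}\to\tfrac1s$ at depth $L$ as $t_\alpha\to\infty$), and interpolate logarithmically on the two transition annuli at depth $[L,L+1]$, where the two model metrics are $O(\epsilon)$-close. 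A conformal correction supported in those annuli, of $C^2$-size $O(\epsilon)$, makes the curvature $\ge -1-O(\epsilon)$ everywhere, and applying the lemma to $(1+O(\epsilon))\sigma_t$, whose curvature is $\ge-1$, gives $\rho_{S_t}\ge(1+O(\epsilon))\sigma_t\ge\rho_{S_\infty}$ on $Y_{M_0}$; here $\sigma_t$ is complete because it coincides with $\rho_{S_\infty}$ near any cusp inherited from $S$ and $S_t$ has no other ends. The delicate point is carrying out this gluing while keeping the curvature on the correct side of $-1$; an alternative that avoids it is to check that $S_t\to S_\infty$ in the augmented Teichm\"uller space via the usual plumbing coordinates and invoke the continuity of the hyperbolic metric there.

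With $(1-\epsilon)\rho_{S_\infty}\le\rho_{S_t}\le(1+\epsilon)\rho_{S_\infty}$ established on every $Y_M$, the conclusion is routine. A ball $B_{S_\infty}(o,R)$ lies in some $Y_{M(R)}$, because the cusps of $S_\infty$ have infinite collars, so a path of length $R$ from $o$ penetrates only bounded conformal depth; and for $\epsilon$ small relative to $R$ and $t$ large, a $\rho_{S_t}$-path of length $<R/(1+\epsilon)$ from $o$ has $\rho_{S_\infty}$-length $<R$ as long as it remains in $Y_{M(R)+1}$, so by a first-exit argument it never leaves $Y_{M(R)}$ and hence $B_{S_t}(o,R/(1+\epsilon))\subseteq Y_{M(R)}$. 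On $Y_{M(R)}$ the identity is then the required $(1+\epsilon)$-bilipschitz embedding, and the proposition follows.
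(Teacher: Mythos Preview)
Your overall plan---compare the hyperbolic densities of $S_t$ and $S_\infty$ directly on a common exhaustion---is sound and genuinely different from the paper's route. The paper instead removes the middle circles $\alpha_t$ from $S_t$ to obtain $X_t$, uses the canonical conformal embedding $g_t:X_t\to S_\infty$, lifts it to universal covers, and shows by a Schwarz/normal-families argument that the lift tends to the identity; bilipschitz convergence is then read off from local uniform convergence of derivatives. That approach needs no metric surgery and no curvature computation, at the cost of the modulus estimates in Section~\ref{collarsingraft} (to show the lifts exhaust $\D$). Your upper bound via $Y_M\hookrightarrow S_t$ and $\rho_{Y_M}\downarrow\rho_{S_\infty}$ is correct.

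The lower bound, however, rests on a false lemma. The assertion ``a complete conformal metric $\sigma$ on $S_t$ with Gauss curvature $\ge -1$ satisfies $\rho_{S_t}\ge\sigma$'' fails already on any closed hyperbolic surface: take $\sigma=2\rho_{\mathrm{hyp}}$, which is complete with curvature $-1/4\ge -1$ yet $\sigma>\rho_{\mathrm{hyp}}$. (Yau's Schwarz lemma applied to $\id:(S_t,\sigma_t)\to(S_t,\rho_{S_t})$ actually yields the \emph{reverse} inequality $\rho_{S_t}\le\sigma_t$.) Fortunately your construction can be salvaged with the \emph{correct} Ahlfors lemma: any conformal metric $\sigma$ with curvature $\le -1$ satisfies $\sigma\le\rho_{S_t}$, no completeness needed. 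Since your glued $\sigma_t$ is assembled from two curvature-$(-1)$ models, its curvature after the interpolation is $-1+O(\epsilon)$; scaling \emph{down} to $(1-C\epsilon)\sigma_t$ (not up, as you wrote) forces curvature $\le -1$, and Ahlfors then gives $(1-C\epsilon)\rho_{S_\infty}=(1-C\epsilon)\sigma_t\le\rho_{S_t}$ on $Y_{M_0}$, which is exactly the lower bound you need. You should also justify the $C^2$ (not merely $C^0$) closeness of the two cusp models at depth $L$, since the curvature of the interpolant depends on second derivatives; this follows from the expansions $\rho_{S_\infty}=s^{-1}(1+O(s^{-1}))$ and $(\pi/t_\alpha)/\sin(\pi s/t_\alpha)=s^{-1}(1+O(s^2/t_\alpha^2))$ in grafted-cylinder coordinates, but it deserves a line.
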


The notation $t \to \infty^E$ means that $t_\alpha \to \infty$ for each $\alpha \in E$. There are several equivalent ways to describe geometric convergence. The first one is to say that for every choice of basepoint $x_\infty \in S_\infty$, there is a basepoint $x_t \in S_t$ such that $(S_t, x_t) \to (S_\infty, x_\infty)$ in Gromov's bilipschitz metric. This means that someone with blurred vision and limited eyesight cannot distinguish $S_t$ from $S_\infty$ when standing at $x_t$ and $x_\infty$ whenever $t$ is large enough. The second way is to say that the deck group for the universal cover of $S_t$ converges to the deck group for the universal cover of $S_\infty$ in the Chabauty topology. Lastly, we can say that for any choice of Fenchel-Nielsen coordinates for $S$ compatible with the multicurve $E$, the coordinates for $S_t$ about curves not in $E$ converge to the corresponding coordinates for $S_\infty$, and the length of every curve in $E$ converges to zero, corresponding to the fact that the curve has become a pair of cusps in $S_\infty$. We prove these three versions of Proposition \ref{geomconv} in section \ref{sectiongc}.

Our main result, however, is that the remaining Fenchel-Nielsen coordinates --the twist coordinates about the curves which are getting pinched-- converge as well.

\begin{thm}\label{twists converge}
For every $\alpha\in E$, the Fenchel-Nielsen twist coordinate for $S_t$ around $\alpha$ converges to some finite value as $t \to \infty^E$.
\end{thm}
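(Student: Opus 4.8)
The plan is to control the hyperbolic geometry of $S_t$ in a collar about the geodesic $\gamma_\alpha^t$ representing $\alpha$, finely enough to read off the twist. Recall that $\tau_\alpha(S_t)$ depends continuously (in fact real-analytically) on $t$, and that, once the pants decomposition extending $E$ and the transverse reference data of the coordinate system are fixed, $\tau_\alpha(S_t)$ is determined by the hyperbolic metric near $\gamma_\alpha^t$ together with the marking: if $\delta$ is a fixed curve or arc transverse to $\alpha$ used to normalize the twist, then $\tau_\alpha(S_t)$ equals $\ell_\alpha(S_t)$ times the signed number of times the geodesic representative $\delta_t$ winds around $\gamma_\alpha^t$ relative to the reference, up to a term controlled by the collar geometry; equivalently, it is read off from the feet along $\gamma_\alpha^t$ of the seams of the two pairs of pants adjacent to $\alpha$, lifted to $\R$ via the marking. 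As stressed in the introduction, Proposition~\ref{geomconv} says nothing about this: a full Dehn twist about a curve of length $\ell_\alpha(S_t)$ perturbs the surface by $O(\ell_\alpha(S_t))$ in the bilipschitz metric, so the winding could a priori run off to infinity without destroying geometric convergence. The content of the theorem is that it does not.

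First I would record the local model. By the collar lemma and the estimates underlying Proposition~\ref{geomconv} --- the inserted flat cylinder $C_\alpha^t$ has modulus $t_\alpha \to \infty$, forcing $\ell_\alpha(S_t) \asymp \pi/t_\alpha \to 0$ --- the maximal embedded collar $\mathcal{C}_t$ about $\gamma_\alpha^t$ is isometric to $\{\,|\rho| < w_t\,\} \times (\R/\Z)$ with metric $d\rho^2 + \ell_\alpha(S_t)^2 \cosh^2\!\rho \, d\theta^2$, where $w_t \to \infty$; moreover $\mathcal{C}_t$ and $C_\alpha^t$ are isotopic embedded annuli of comparable modulus, so they share a subannulus of modulus tending to infinity. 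The angular coordinate $\theta$ on $\mathcal{C}_t$ is canonical up to an additive constant. The key claim is that $\theta$ agrees, up to an additive constant $c_t \in \R/\Z$ that \emph{converges}, with the angular coordinate coming from the flat product structure of $C_\alpha^t$ --- equivalently, with the analytic parametrization $\alpha$, which by construction is the \emph{same} on the two boundary circles of $C_\alpha^t$ (the gluing is ``straight'') and hence extends on each side to the standard angular coordinate of the corresponding cusp of $S_\infty$. In plumbing-coordinate terms this says that the argument of the plumbing parameter about $\alpha$ converges as $t \to \infty^E$ rather than spinning. Proving it requires a distortion theorem: one estimates the uniformizing map of $S_t$ near the degenerating neck --- via a Teichm\"uller-type or Gr\"otzsch-type inequality for moduli of ring domains, or a Koebe-type distortion bound --- to conclude that the conformal automorphism of the annulus relating the two angular coordinates is convergent, not merely bounded.

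Granting this, the theorem follows. Away from $\gamma_\alpha^t$, Proposition~\ref{geomconv} gives that $\delta_t$ converges to a complete geodesic $\delta_\infty$ of $S_\infty$ running out into the two cusps that replace $\alpha$; since geodesics into a cusp are vertical rays in the standard cusp coordinate, $\delta_\infty$ exits those cusps at definite angular coordinates $a_\infty$ and $b_\infty$. Hence, in the coordinate above, $\delta_t$ meets the two ends of $C_\alpha^t$ near $\theta = a_\infty$ and near $\theta = b_\infty$, and homotopically it threads across $C_\alpha^t$ a number of times $n_0 \in \Z$ that is \emph{independent of $t$}, because the markings relating the $S_t$ can be taken to preserve the flat product structure of the inserted cylinders. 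So the total angular displacement of $\delta_t$ across $\mathcal{C}_t$ converges to $b_\infty - a_\infty + n_0$ (up to $c_\infty$), the winding of $\delta_t$ about $\gamma_\alpha^t$ stays bounded, and --- using $\ell_\alpha(S_t) \to 0$, convergence of the collar-geometry term, and comparison with the reference --- $\tau_\alpha(S_t)$ converges to a finite value. (An equivalent packaging: the straightness of the grafting forces the derivative of $\tau_\alpha(S_t)$ along the family to decay fast enough to be integrable.)

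The main obstacle is the middle step: upgrading the coarse geometric convergence of Proposition~\ref{geomconv} to the quantitative assertion that the angular coordinate of the collar about $\gamma_\alpha^t$ is, with error tending to zero, the conformal $\alpha$-parametrization transported straight across the inserted cylinder. Coarse convergence is entirely compatible with that angular coordinate drifting with $t$ --- which is exactly the picture of a divergent twist --- so a genuine distortion estimate for the uniformization near the thin neck is unavoidable. The remaining ingredients --- constancy of the threading number $n_0$ in $t$, and the bookkeeping that turns the reduced twist in $\R/\ell_\alpha(S_t)\Z$ into a well-defined real number --- are comparatively routine.
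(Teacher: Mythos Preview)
Your outline is essentially the paper's own argument: geometric convergence pins down the seams outside the collar, geodesics into a cusp have limiting angular coordinates (your ``vertical rays'' remark is Lemma~\ref{lem:asymangle}), and the crux---which you correctly isolate as the main obstacle---is a distortion theorem showing that the conformal embedding of the grafted cylinder into the covering annulus corresponding to $\alpha$ is $C^0$-close to an isometry away from a buffer of fixed height. The paper supplies exactly this estimate (Theorem~\ref{long cylinders cant twist}: any normalized essential conformal embedding $B(b)/\Z \hookrightarrow \Co/\Z$ is within $\varepsilon$ of the identity on $B(b-c)/\Z$), proved via a Cauchy-integral trick with kernel $1/\sin^2(\pi(w-z))$ that exploits the $\Z$-periodicity; since you name this step but do not prove it, your proposal is a faithful roadmap rather than a complete proof. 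One small correction: the map relating the two angular coordinates is a conformal \emph{embedding} of the grafted cylinder into the (larger-modulus) covering annulus, not an automorphism.
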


This was first proved by Chris Judge (unpublished) in the case of Strebel rays. Our proof is both more general and elementary. One may interpret this result as saying that $S_t$ does not spiral in moduli space as it converges to $S_\infty$. Scott Wolpert asked in \cite{Wolpert} whether Fenchel-Nielsen twist coordinates about curves that get pinched stay bounded along Weil-Petersson geodesics of finite length. This would follow if one could prove that any such geodesic is contained in a compactly generated family of grafting hyperoctants of the form $t \mapsto S_t$. 

In the next section we briefly review Fenchel-Nielsen coordinates. Then we state a more precise version of Theorem \ref{twists converge} and proceed with the proof.

\section{Fenchel-Nielsen coordinates}

Recall that $E$ is an analytic multicurve in the hyperbolic Riemann surface $S$. Let $F$ be a maximal multicurve in $S$ containing $E$, and let $X$ be an element of $\teich(S)$, the Teichm\"uller space of $S$. Every $\alpha\in F$ is freely homotopic to a unique simple closed geodesic $\alpha^*$ in the hyperbolic metric on $X$. The length of $\alpha^*$ is denoted by $\ell^\alpha(X)$. If we cut $X$ along the set of closed geodesics $\bigcup_{\alpha \in F} \alpha^*$, we obtain a disjoint union of hyperbolic pairs of pants. Each such pair of pants has three geodesic seams, one between each pair of ends (either cusp neighborhoods or one-sided neighborhoods of geodesics). For each $\alpha \in F$, the half-twist coordinate $\theta^\alpha(X)$ is defined as the signed distance between the seams of the pants on both sides of $\alpha^*$, counted in number of full rotations. Since there are two diametrically opposed seams to choose from on each side of $\alpha^*$, this half-twist is only defined up to half integers, hence the name. We thus consider $\theta^\alpha(X)$ as an element of the circle $\R / \frac{1}{2} \Z$. One can lift $\theta^\alpha$ to a continuous map $\widetilde \theta^\alpha: \teich(S) \to \R$. The resulting map
$$
\begin{array}{ccc}
\teich(S) & \to & (\R_+ \times \R)^F\\
X  &  \mapsto   & (\ell^\alpha(X),\widetilde \theta^\alpha(X))_{\alpha \in F}
\end{array}
$$
is a homeomorphism, known as \emph{Fenchel-Nielsen coordinates}.

We will show that for every $\alpha \in E$, the half-twist $\theta^\alpha(S_t)$ converges as $t \to \infty^E$. Since the map $t \mapsto S_t$ is continuous and the projection $\R \to \R / \frac{1}{2} \Z$ is a covering map, the Fenchel-Nielsen twist coordinate $\widetilde \theta^\alpha(S_t)$ also converges as $t \to \infty^E$.

\section{Limiting angles} \label{limiting angles}

In this section, we explain how the limit of each half-twist $\theta^\alpha(S_t)$ for $\alpha \in E$ can be seen on the limit surface $S_\infty$.

If we add two copies of $E$ to the cut-up surface $S \setminus E$, we get a bordered Riemann surface $S_E$. Let $\Sp^1=\R / \Z$ have the usual orientation. For each $\alpha \in E$, there are two parametrized boundary curves $$\alpha^+,\alpha^- : \Sp^1 \to S_E,$$ labeled in such a way that $S_E$ is to the left of $\alpha^+$ and to the right of $\alpha^-$. The surface $S_\infty$ is obtained by gluing half-infinite cylinders
\begin{eqnarray*}
\Sp^1 \times (-\infty,0] &\mbox{and}& \Sp^1 \times [0,+\infty)
\end{eqnarray*}
to $S_E$ along $\Sp^1 \times \{0\}$ via the maps $\alpha^+$ and $\alpha^-$ respectively, for each $\alpha \in E$. Note that a half-infinite cylinder is conformally equivalent to a punctured disk. It is easy to see that every component of $S_\infty$ is a hyperbolic surface of finite area.

\begin{figure}[htbp]
\centering
\includegraphics[scale=.8]{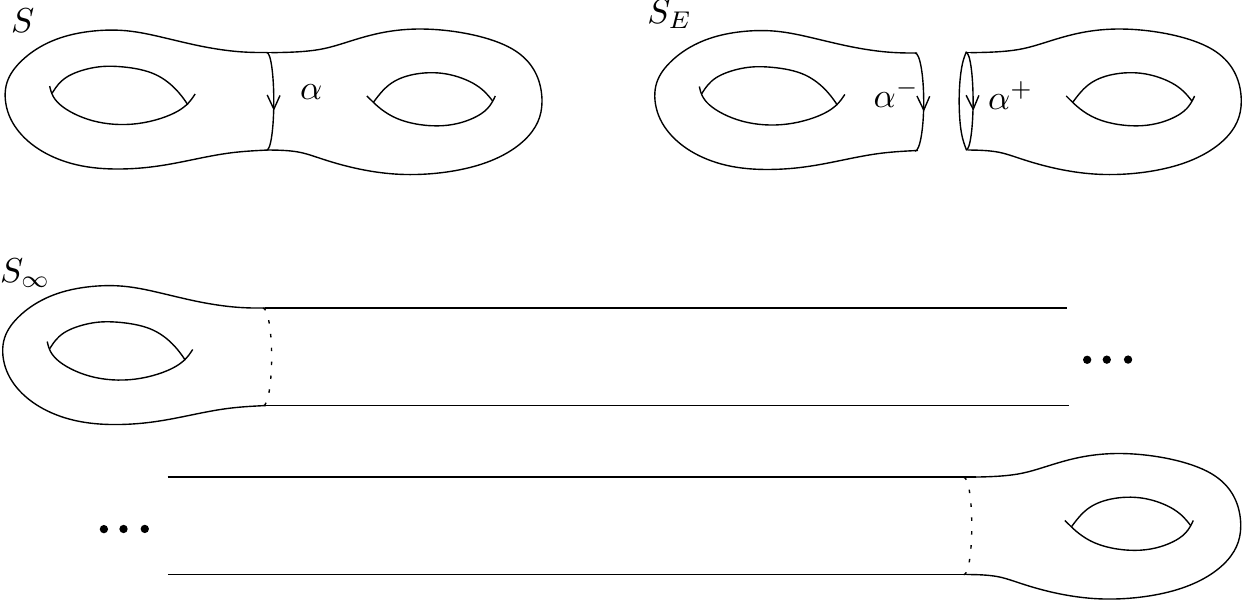}
\caption{The infinitely grafted surface $S_\infty$.}
\end{figure}

As before, let $F$ be any maximal multicurve in $S$ containing $E$. The inclusion of $F \setminus E$ in $S_\infty$ is then a maximal multicurve. For each $\beta \in F \setminus E$, there is a unique closed geodesic $\beta^*$ freely homotopic to $\beta$ in $S_\infty$. The geodesic multicurve
$$
(F \setminus E)^* = \bigcup_{\beta \in F \setminus E} \beta^*
$$
divides $S_\infty$ into hyperbolic pairs of pants, each having three geodesic seams.

For the rest of this section, fix some $\alpha \in E$. Denote by $C^+=\Sp^1 \times (-\infty,0]$ and $C^-=\Sp^1 \times [0,+\infty)$ the half-infinite cylinders in $S_\infty$ glued to $S_E$ via $\alpha^+$ and $\alpha^-$ respectively. For each of the two cusp neighborhoods $C^+$ and $C^-$, there are two seams for the pants decomposition $(F \setminus E)^*$ that go out towards this cusp. Pick one seam $\eta^+$ going to infinity in $C^+$ and one seam $\eta^-$ going to infinity in $C^-$. Let $(\theta^\pm(s),y^\pm(s))$ be the coordinates of $\eta^\pm(s)$ in $C^\pm$.

\begin{lem} \label{lem:asymangle}
There exist angles $\theta^\pm_\infty \in \Sp^1$ such that $\theta^\pm(s) \to \theta_\infty^\pm$ as $s\to\infty$.
\end{lem}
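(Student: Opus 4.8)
The plan is to read off the behavior of a geodesic ray entering a cusp in the conformal coordinate supplied by the grafted cylinder. Fix $\alpha\in E$ and work on the $+$ side; the argument for the $-$ side is identical. The cylinder $C^+=\Sp^1\times(-\infty,0]$ is conformally a punctured disk whose puncture is the cusp of $S_\infty$ to which $C^+$ belongs: the map $z=\exp\!\big(2\pi(y+i\theta)\big)$ carries $C^+$ onto $\{\,0<|z|\le 1\,\}$, and under it $\theta=\tfrac{1}{2\pi}\arg z$. On the other hand this cusp has a standard horocyclic neighborhood isometric to $\{\,\im w>1\,\}/\langle w\mapsto w+1\rangle$, and in the coordinate $\zeta=\exp(2\pi i w)$ on the corresponding punctured disk $\{\,0<|\zeta|<e^{-2\pi}\,\}$ the hyperbolic metric of $S_\infty$ equals $\dfrac{|d\zeta|}{|\zeta|\log(1/|\zeta|)}$. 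Both $z$ and $\zeta$ are conformal coordinates on a punctured-disk neighborhood of this same cusp, so on a small enough such neighborhood $z=\phi(\zeta)$ with $\phi$ holomorphic and injective on a punctured disk about $0$ and bounded there; by the removable-singularity and open mapping theorems $\phi$ extends holomorphically across $0$ with $\phi(0)=0$ and $\phi'(0)\neq 0$.

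Next I would identify the shape of the seam near the cusp. For $s$ large the arc $\eta^+$ lies inside the standard cusp neighborhood (a geodesic asymptotic to a cusp eventually enters, and stays inside, any embedded horocyclic neighborhood), with $\eta^+(s)$ tending to the cusp as $s\to\infty$. Lifting to $\hyp$ with the cusp subgroup normalized to $\langle w\mapsto w+1\rangle$, the tail of $\eta^+$ lifts to a geodesic ray limiting to the parabolic fixed point $\infty$; but the only geodesics of $\hyp$ with an endpoint at $\infty$ are the vertical lines, so the tail is a ray $\{\,w=x_0+it:\ t\ge t_0\,\}$ for some fixed $x_0\in\R$. Projecting, the tail of $\eta^+$ is, in the coordinate $\zeta$, a radial segment $\zeta=r\,e^{2\pi i x_0}$ with $0<r<\varepsilon$: along it $\arg\zeta$ is constant. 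Hence, writing $z(s)$ and $\zeta(s)$ for the two coordinates of $\eta^+(s)$, we have $z(s)=\phi(\zeta(s))=\phi'(0)\,\zeta(s)+O(|\zeta(s)|^2)$ for $s$ large, and since $\zeta(s)\to 0$ this gives $\arg z(s)=\arg\phi'(0)+2\pi x_0+o(1)$ as $s\to\infty$.

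Finally I would translate back: since $\theta^+(s)=\tfrac{1}{2\pi}\arg z(s)$, the above yields $\theta^+(s)\to\tfrac{1}{2\pi}\big(\arg\phi'(0)+2\pi x_0\big)=:\theta^+_\infty$ in $\Sp^1$, and the same computation on the other side produces $\theta^-_\infty$. The argument is short, and its only substantive point is the observation that a geodesic ray into a cusp is \emph{radial}, hence non-spiralling, in the cusp's natural punctured-disk coordinate, and that changing to any other conformal coordinate only rotates its asymptotic direction by $\arg\phi'(0)$ without introducing any winding. The step that needs the most care is the comparison of the two coordinates — verifying that the transition function $\phi$ extends conformally over the puncture — but this is routine; everything else is bookkeeping.
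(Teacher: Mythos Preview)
Your proof is correct and follows essentially the same approach as the paper: both arguments observe that the seam is a radial ray in the punctured-disk coordinate coming from the cusp cover, invoke Riemann's removable singularity theorem to show that the transition function between this coordinate and the grafted-cylinder coordinate extends conformally over the puncture, and conclude that the argument of the seam in cylinder coordinates has a limit. You are a bit more explicit in writing out the Taylor expansion and naming the limit as $\tfrac{1}{2\pi}(\arg\phi'(0)+2\pi x_0)$, whereas the paper simply says the path has a one-sided tangent at the origin, but the content is the same.
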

\begin{proof}
Consider the conformal isomorphism $\varphi : \overline \D \setminus \{ 0 \} \to C^+$ as a map into $S_\infty$. There is a covering map
$
\psi : \D \setminus \{ 0 \} \to S_\infty 
$
corresponding to the cusp $C^+$, and $\varphi$ lifts under $\psi$ to a conformal embedding $\widetilde \varphi : \overline \D \setminus \{ 0 \} \to \D \setminus \{ 0 \}$. We can also lift the seam $\eta^+$ under the covering map $\psi$. The resulting geodesic $\widetilde{\eta^+} : [0,\infty) \to \D \setminus \{ 0 \}$ goes towards the puncture as $s \to \infty$.  Therefore, it is a radial ray.

By Riemann's removable singularity theorem,  $\widetilde \varphi$ extends holomorphically at the origin with $\widetilde \varphi(0)=0$. In particular, the path
$$
\varphi^{-1} \circ \eta^+ =  \widetilde \varphi^{-1} \circ \widetilde{\eta^+}
$$
has a one-sided tangent at the origin. Equivalently, the argument of $\varphi^{-1} \circ \eta^+(s)$ has a limit as $s \to \infty$.
\end{proof}

The precise version of Theorem \ref{twists converge} which we will prove is the following.

\begin{thm} \label{precise twists}
The Fenchel-Nielsen half-twist $\theta^\alpha(S_t)$ converges to $\theta_\infty^+ - \theta_\infty^-$ in $\R/\frac{1}{2} \Z$ as $t\to\infty^E$.
\end{thm}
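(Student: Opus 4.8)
The plan is to express $\theta^\alpha(S_t)$ in terms of the positions of two seam feet on the short geodesic $\alpha_t^*\subset S_t$ homotopic to $\alpha$, to measure those positions with the coordinate coming from the grafted cylinder, and to read off their limits from the geometric convergence $S_t\to S_\infty$ of Proposition \ref{geomconv}. Fix a maximal multicurve $F\supseteq E$; in $S_t$ let $\alpha_t^*$ be the geodesic homotopic to $\alpha$ and $\ell_t=\ell^\alpha(S_t)$ its length. By Proposition \ref{geomconv}, $\ell_t\to 0$, and for $t$ large $\alpha_t^*$ lies inside the grafted cylinder $\mathcal C_\alpha^t=\Sp^1\times[0,t_\alpha]$, isotopic there to a round circle $\Sp^1\times\{r_0\}$ with $r_0$ and $t_\alpha-r_0$ both tending to infinity; radial projection then parametrizes $\alpha_t^*$ by $\Sp^1$, with a coordinate I call $\theta$. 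Near the ends $r=0$ and $r=t_\alpha$ of $\mathcal C_\alpha^t$ the coordinate $\theta$ is, by construction of $S_t$, the grafting parametrization $\alpha^+$, respectively $\alpha^-$, which is also the circle coordinate on the cusp $C^+$, respectively $C^-$, of $S_\infty$. The two pairs of pants of the $F$-decomposition adjacent to $\alpha_t^*$ each contribute two seams meeting $\alpha_t^*$ orthogonally at antipodal points; let $\eta_t^{\pm}$ be the ones whose limits in $S_\infty$ are the distinguished seams $\eta^{\pm}$ of Section \ref{limiting angles}, and let $\vartheta_t^{\pm}\in\Sp^1$ be the $\theta$-coordinate of the foot of $\eta_t^{\pm}$ on $\alpha_t^*$.

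The first claim is that $\theta^\alpha(S_t)=\bigl(\vartheta_t^+-\vartheta_t^-\bigr)+o(1)$ in $\R/\tfrac{1}{2}\Z$ as $t\to\infty^E$. Indeed, by definition $\theta^\alpha(S_t)$ is $1/\ell_t$ times the signed hyperbolic arc length along $\alpha_t^*$ from a foot on one side to a foot on the other, read in $\R/\tfrac{1}{2}\Z$. For $t$ large the hyperbolic metric of $S_t$ near $\alpha_t^*$ is uniformly close to a flat cylinder metric --- because a conformal cylinder of large modulus forces the hyperbolic metric to resemble the standard collar near its core --- so the hyperbolic arc-length coordinate on $\alpha_t^*$ equals $\ell_t\,\theta$ plus a constant, and that constant cancels in the difference of the two feet. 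This gives the claim.

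The second claim is that $\vartheta_t^+\to\theta_\infty^+$ and $\vartheta_t^-\to\theta_\infty^-$. Consider the ``$+$'' case. As $t\to\infty^E$, the part of $S_t$ within any fixed hyperbolic distance of $\alpha_t^*$ on the $r=0$ side of the cylinder converges geometrically, with its conformal structure, to a neighborhood of the cusp point of $C^+$ inside $C^+$; under this convergence the coordinate $\theta$ limits to the coordinate $\alpha^+$ on $C^+$, and $\eta_t^+$ limits to $\eta^+$. Since the hyperbolic distance from $\alpha_t^*$ to the $r=0$ end of $\mathcal C_\alpha^t$ tends to infinity, the foot of $\eta_t^+$ on $\alpha_t^*$ corresponds in the limit to a point of $\eta^+$ arbitrarily deep in the cusp $C^+$; combining this with Lemma \ref{lem:asymangle}, which gives $\theta^+(s)\to\theta_\infty^+$, through a diagonal argument yields $\vartheta_t^+\to\theta_\infty^+$. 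The ``$-$'' case is identical, with $r=t_\alpha$ and $C^-$. Together with the first claim this gives $\theta^\alpha(S_t)\to\theta_\infty^+-\theta_\infty^-$ in $\R/\tfrac{1}{2}\Z$, proving Theorem \ref{precise twists}.

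The main obstacle is the geometric input to the two claims: uniform control, as $t\to\infty^E$, on the hyperbolic metric of $S_t$ along the grafted cylinder --- that it resembles the standard collar near $\alpha_t^*$, and that fixed-diameter neighborhoods of either end converge geometrically, with conformal structure, to the corresponding cusp of $S_\infty$ --- so that $\theta$ and the positions of the seam feet are controlled. This is a quantitative refinement of Proposition \ref{geomconv}. One must also be careful with orientation: the conventions that $S_E$ lies to the left of $\alpha^+$ but to the right of $\alpha^-$, together with the fact that the two ends of an oriented cylinder carry opposite boundary orientations, are what make the two feet enter with the signs producing the difference $\theta_\infty^+-\theta_\infty^-$ rather than a sum. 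Everything else is routine bookkeeping with the definition of the twist.
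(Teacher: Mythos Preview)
Your outline is the same as the paper's, and the two claims you isolate are exactly the right ones. The gap is that both claims rest on a fact you flag as ``the main obstacle'' but do not prove, and this fact is not a routine refinement of Proposition~\ref{geomconv}: it is the distortion theorem for long cylinders (Theorem~\ref{long cylinders cant twist}), together with Lemma~\ref{outside collar}, and it occupies the bulk of the paper.

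Concretely, your first claim asserts that the hyperbolic arc-length parameter on $\alpha_t^*$ equals $\ell_t\theta$ up to an additive constant and an $o(1)$ error, where $\theta$ is the grafted-cylinder angular coordinate. Your justification is that the hyperbolic metric near $\alpha_t^*$ ``resembles the standard collar''. That is true, but the standard collar lives in the \emph{annulus-cover} coordinate, not in the grafted-cylinder coordinate. The passage between the two is the conformal embedding $s_t:C_t^\alpha\to A_t$, and nothing in geometric convergence or in the collar lemma prevents $s_t$ from twisting: a univalent map of a long cylinder can in principle send vertical lines to spirals. What rules this out is precisely Theorem~\ref{long cylinders cant twist}, which shows that away from a buffer near $\partial C_t^\alpha$ the map $s_t$ is $\varepsilon$-close to a translation. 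The paper uses this to transport the angular position of the seam from the foot $z_t^\pm$ (where it is read off in $A_t$, because seams are longitudinal there) to the buffer boundary $b_t^\pm$ (where it can be compared to $S_\infty$ via Lemma~\ref{orthocompact}), and then Lemma~\ref{lem:asymangle} finishes. Your second claim has the same issue: the statement that ``the coordinate $\theta$ limits to the coordinate $\alpha^+$ on $C^+$'' under geometric convergence near $\alpha_t^*$ is again the assertion that $s_t$ does not twist. So the proposal is a correct skeleton, but the step you label an obstacle is the theorem's real content; you should either prove a version of Theorem~\ref{long cylinders cant twist} or invoke it explicitly.
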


\begin{figure}[htbp]
\centering
\includegraphics[scale=.8]{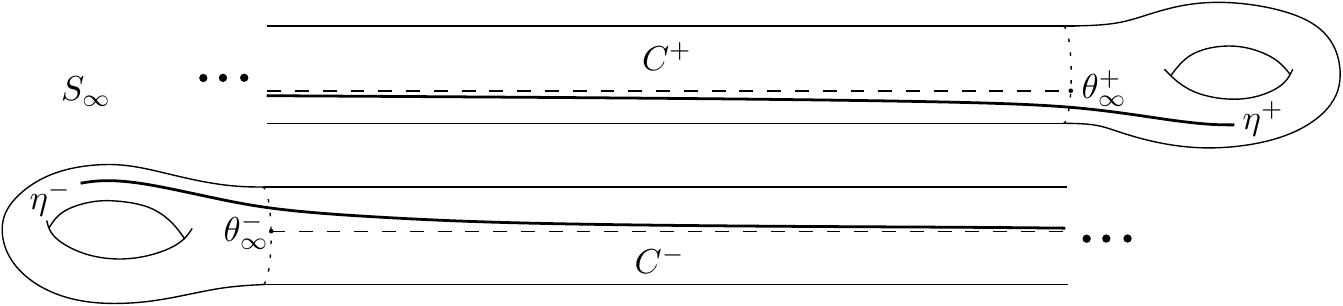}
\caption{The asymptotic angles on either side of the pair of cusps corresponding to $\alpha$ in $S_\infty$.}
\end{figure}

By definition, the half-twist $\theta^\alpha(S_t)$ is the angle difference between two seams $\eta_t^+$ and $\eta_t^-$ on either side of the closed geodesic $\alpha^*$ in $S_t$. There are two main ideas involved in proving that $\theta^\alpha(S_t) \to (\theta_\infty^+ - \theta_\infty^-)$. The first one is of geometric convergence. In the hyperbolic metric, $S_t$ looks more and more like $S_\infty$ away from the curves which are getting pinched. In particular, $\eta_t^+$ and $\eta_t^-$ converge on compact sets to $\eta^+$ and $\eta^-$. The second idea is to control what happens deep inside the grafted cylinder, and how the change from grafted cylinder coordinates to hyperbolic coordinates distorts distances.

\begin{figure}[htbp]
\centering
\includegraphics[scale=.8]{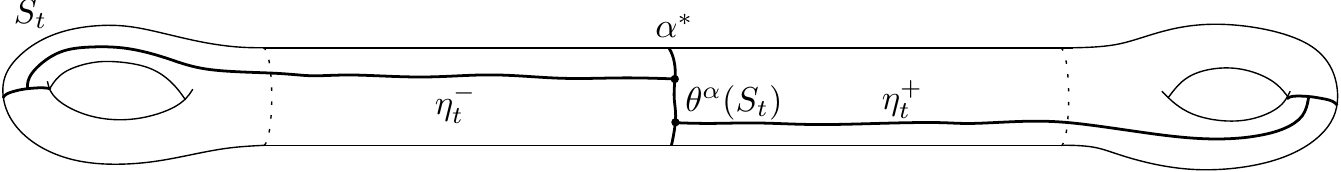}
\caption{The closed geodesic $\alpha^*$ in $S_t$ and one geodesic seam on each side.}
\end{figure}

\section{Grafted cylinders are long in the hyperbolic metric} \label{collarsingraft}

In this section, we relate the conformal geometry of $S_t$ to its hyperbolic geometry. Let us set up some notations. For every $t\in (\R_{\geq 0})^E$, the surface $S_t$ is constructed by gluing, for each $\alpha \in E$, the cylinder $\Sp^1 \times [0,t_\alpha]$ to the bordered surface $S_E$ with the maps $\alpha^+$ and $\alpha^-$ on the top and bottom circles respectively. Let $C^\alpha_t$ be the grafted cylinder $\Sp^1 \times (0,t_\alpha)$ in $S_t$, and let $\alpha_t$ be the central circle $\Sp^1 \times \{t_\alpha /2 \}$. Our goal is to prove that the hyperbolic distance across either half of $C^\alpha_t$ goes to infinity.

\begin{lem} \label{distanceacross}
For every $\alpha \in E$, the distance across either component of $C^\alpha_t \setminus \alpha_t$ in the hyperbolic metric on $S_t$ goes to $\infty$ as $t \to \infty^E$.
\end{lem}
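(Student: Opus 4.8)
The plan is to compute the hyperbolic metric of $S_t$ on the grafted cylinder $C^\alpha_t$ \emph{exactly}, by lifting to the cyclic cover associated to $\alpha$. Let $\alpha^*$ be the closed geodesic of $S_t$ freely homotopic to $\alpha_t$, write $\ell_t := \ell^\alpha(S_t)$ for its length, and let $Q_t$ be the annular cover of $S_t$ corresponding to the infinite cyclic subgroup of $\pi_1(S_t)$ generated by $\alpha$. Then $Q_t$ is a hyperbolic surface which is conformally the round annulus $\Sp^1 \times (0, \pi/\ell_t)$, the covering $q_t : Q_t \to S_t$ is a local isometry, and in the coordinate $\Sp^1 \times (0, \pi/\ell_t)$ the hyperbolic metric of $Q_t$ is $\tfrac{\ell_t}{\sin(\ell_t \tilde y)}\,|d\tilde z|$. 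Since $C^\alpha_t$ is an embedded annulus whose core is freely homotopic to $\alpha$, the inclusion $C^\alpha_t \hookrightarrow S_t$ lifts to a conformal embedding $\iota_t : C^\alpha_t \hookrightarrow Q_t$, which is a concentric inclusion $\Sp^1 \times (0, t_\alpha) \hookrightarrow \Sp^1 \times (0, \pi/\ell_t)$, $(x,y) \mapsto (x, y + c_t)$, for some $c_t \ge 0$ with $c_t + t_\alpha < \pi/\ell_t$. Consequently the hyperbolic metric of $S_t$, restricted to $C^\alpha_t$, equals $\tfrac{\ell_t}{\sin(\ell_t(y + c_t))}\,|dz|$.

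Next I would control $\ell_t$ and $c_t$. Let $m_t$ be the maximal modulus of an embedded annulus in $S_t$ homotopic to $\alpha$. Since $C^\alpha_t$ is such an annulus of modulus $t_\alpha$, we get $m_t \ge t_\alpha$, hence $\ell_t \le \pi/t_\alpha \to 0$. On the other hand --- and this is the crux --- $m_t \le t_\alpha + M_E$ for a constant $M_E$ depending only on $S$ and $E$: cutting any embedded annulus homotopic to $\alpha$ along the curves $\alpha^\pm$ writes it as a modulus-additive union of a sub-annulus of $C^\alpha_t$ (modulus at most $t_\alpha$) and sub-annuli of the fixed bordered surface $S_E$ (total modulus at most $M_E$, because the fixed non-peripheral curves $\alpha^\pm$ have finite extremal length in $S_E$). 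Since moreover the standard collar of $\alpha^*$ is an embedded annulus homotopic to $\alpha$ of modulus at least $\pi/\ell_t - 2$ once $\ell_t$ is small, we obtain $0 \le \pi/\ell_t - t_\alpha \le M_E + 2$, so $\ell_t\, t_\alpha \to \pi$. Finally $c_t$ is at most the total modulus $\pi/\ell_t - t_\alpha$ of the two annuli complementary to $\iota_t(C^\alpha_t)$ in $Q_t$, so $c_t$ is bounded and $\ell_t\, c_t \to 0$.

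The distance estimate is then immediate. Let $\sigma$ be any path in $S_t$ from $\alpha^+$ to $\alpha_t$; its last subarc contained in $\overline{C^\alpha_t}$ runs from $\alpha^+$ or from $\alpha^-$ to $\alpha_t = \Sp^1 \times \{t_\alpha/2\}$, so along that subarc the coordinate $y$ attains every value of $[0, t_\alpha/2]$ or every value of $[t_\alpha/2, t_\alpha]$. As $\tfrac{\ell_t}{\sin(\ell_t(y+c_t))}\,|dz| \ge \tfrac{\ell_t}{\sin(\ell_t(y+c_t))}\,|dy|$ on $C^\alpha_t$, this yields
\[
\length_{S_t}(\sigma) \ \ge\ \min\!\left( \int_0^{t_\alpha/2} \frac{\ell_t\, dy}{\sin(\ell_t(y+c_t))},\ \ \int_{t_\alpha/2}^{t_\alpha} \frac{\ell_t\, dy}{\sin(\ell_t(y+c_t))} \right).
\]
Using the primitive $\log\tan$ together with $\ell_t c_t \to 0$ and $\ell_t t_\alpha \to \pi$, the first integral behaves like $\log\tan(\pi/4) - \log\tan(\ell_t c_t/2) \to +\infty$ and the second like $\log\tan(\ell_t(t_\alpha+c_t)/2) - \log\tan(\pi/4) \to +\infty$, so the right-hand side tends to $+\infty$; the same bound holds for paths from $\alpha^-$ to $\alpha_t$, proving the lemma. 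The main obstacle is the crux estimate $m_t \le t_\alpha + M_E$ of the second paragraph: this is precisely where one must use that $S_E$ and $\alpha$ are fixed, rather than a degenerating family, so that the long thin part forming around $\alpha^*$ cannot absorb an unbounded amount of modulus from outside the grafted cylinder. One should also verify that $\iota_t$ really is a concentric inclusion and not merely a conformal embedding with a wiggly image boundary, although the bound on the complementary moduli makes any such wiggle harmless.
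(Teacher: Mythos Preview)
Your outline --- lift to the annular cover $Q_t$, locate the grafted cylinder inside it, and integrate the hyperbolic density --- is exactly the skeleton of the paper's proof. But two of your steps are not proofs, and they correspond precisely to the two lemmas the paper has to work for.

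\medskip

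\textbf{The concentric inclusion.} You assert that $\iota_t : \Sp^1\times(0,t_\alpha)\hookrightarrow \Sp^1\times(0,\pi/\ell_t)$ is a translation $(x,y)\mapsto(x,y+c_t)$. This is false: an essential conformal embedding of one round annulus into another need not send round circles to round circles, so the pulled-back hyperbolic metric on $C_t^\alpha$ is \emph{not} $\ell_t/\sin(\ell_t(y+c_t))\,|dz|$. You flag this yourself at the end, but ``the bound on the complementary moduli makes any such wiggle harmless'' is not an argument. What is needed is a quantitative statement that a long essential sub-annulus of $\Co/\Z$ must contain a long \emph{straight} sub-cylinder; the paper isolates this as a separate ``non-squeezing'' lemma (any essential cylinder of modulus $m>1$ in $\Co/\Z$ contains a straight cylinder of modulus $m-1$) and applies it to each half $C_t^\pm$ of the grafted cylinder. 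Without this, your integral lower bound is unjustified.

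\medskip

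\textbf{The crux estimate.} Your bound $m_t\le t_\alpha+M_E$ is argued by cutting an embedded annulus $A$ along $\alpha^\pm$ and claiming the pieces form a ``modulus-additive union''. But the Gr\"otzsch inequality goes the other way: if $A$ is cut by disjoint essential curves into $A_1,\dots,A_k$, then $\sum\Mod A_i\le\Mod A$, with equality only when the cuts are straight in the flat model of $A$. So your decomposition gives a \emph{lower} bound on $\Mod A$, not the upper bound you need. The conclusion $\pi/\ell_t - t_\alpha = O(1)$ is correct, but the paper obtains it differently: it shows directly that the hyperbolic collar of modulus $\pi/\ell_t - c$ about $\alpha_t^*$ sits inside $C_t^\alpha$, by bounding the modulus of the family of curves homotopic to $\alpha_t$ that exit $C_t^\alpha$, using the hyperbolic metric on $S_\infty$ (pulled back via $g_t$) as a test metric of bounded area. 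That argument genuinely uses that $S_E$ is fixed, as you anticipated, but through extremal length rather than additivity.

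\medskip

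Once these two ingredients are in place, the final integration step is essentially yours: one endpoint of any crossing arc lies within height $O(\ell_t)$ of the boundary of the covering annulus, the arc must traverse a straight cylinder of height $\sim\ell_t t_\alpha/2$, and $\int \sec y\,dy$ over that range diverges.
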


This will be used in proving that $S_t$ converges geometrically to $S_\infty$ as $t \to \infty^E$.

\subsection*{Grafting pinches}

As we mentioned in the introduction, grafting $S$ along $E$ pinches the corresponding geodesics. Recall that $\alpha_t^*$ denotes the closed geodesic homotopic to $\alpha_t$ in $S_t$, and that $\ell^\alpha(S_t)$ denotes its length.

\begin{lem} \label{short geodesics}
For every $\alpha \in E$ and every $t \in (\R_{\geq 0})^E$, we have $\pi / \ell^{\alpha}(S_t)\geq t_\alpha.$
\end{lem}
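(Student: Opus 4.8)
The plan is to exhibit $C^\alpha_t$ as a conformally embedded annulus of modulus exactly $t_\alpha$ inside $S_t$, and then to bound the hyperbolic length of its core geodesic using the Schwarz--Pick lemma together with the standard formula for the hyperbolic metric of a round annulus. If $t_\alpha = 0$ there is no glued cylinder and the asserted inequality $\pi/\ell^\alpha(S_t)\ge 0$ is trivial, so I assume $t_\alpha>0$ throughout.

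First I would check the embedding. By construction of $S_t$, the cylinder $\Sp^1\times[0,t_\alpha]$ is glued to $S_E$ along its two boundary circles via the two \emph{distinct} boundary curves $\alpha^+$ and $\alpha^-$ of $S_E$; hence its interior $C^\alpha_t=\Sp^1\times(0,t_\alpha)$ is disjoint from $S_E$ in $S_t$, and the inclusion $C^\alpha_t\hookrightarrow S_t$ is a conformal embedding. In the coordinate $z=x+iy$ with $x\in\R/\Z$, $y\in(0,t_\alpha)$, the map $w=e^{2\pi i z}$ identifies $C^\alpha_t$ conformally with the round annulus $A=\{\,w: e^{-2\pi t_\alpha}<|w|<1\,\}$, whose modulus is $\tfrac{1}{2\pi}\log\bigl(e^{2\pi t_\alpha}\bigr)=t_\alpha$.

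Next I would record the hyperbolic geometry of $A$. The map $\tau\mapsto e^{2\pi i\tau}$ presents $A$ as the quotient of the strip $\{\,\tau:0<\im\tau<t_\alpha\,\}$ by the translation $\tau\mapsto\tau+1$, and rescaling the standard strip of height $\pi$ shows the hyperbolic metric of $A$ pulls back to $\frac{\pi}{t_\alpha}\,\frac{|d\tau|}{\sin(\pi\im\tau/t_\alpha)}$. The curve $\{\,\im\tau=t_\alpha/2\,\}$ is the axis of that translation, hence descends to the core geodesic $\gamma_0$ of $C^\alpha_t$ (in the hyperbolic metric of $C^\alpha_t$ alone), of length $\int_0^1\frac{\pi}{t_\alpha}\,dx=\pi/t_\alpha$; and $\gamma_0$ is freely homotopic in $C^\alpha_t$, hence in $S_t$, to the central circle $\alpha_t$ and therefore to $\alpha_t^*$. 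Since $S_t$ is homeomorphic to $S$ and thus hyperbolic, the Schwarz--Pick lemma applied to the conformal embedding $C^\alpha_t\hookrightarrow S_t$ gives that the hyperbolic metric of $S_t$ restricted to $C^\alpha_t$ is pointwise dominated by the hyperbolic metric of $C^\alpha_t$. As $\alpha_t^*$ is the shortest curve in its free homotopy class in $S_t$,
$$
\ell^\alpha(S_t)\;\le\;\length_{S_t}(\gamma_0)\;\le\;\length_{C^\alpha_t}(\gamma_0)\;=\;\frac{\pi}{t_\alpha},
$$
which is exactly the claimed bound $\pi/\ell^\alpha(S_t)\ge t_\alpha$.

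The argument is soft and I do not expect a genuine obstacle; the only points requiring a moment's care are the injectivity of $C^\alpha_t\hookrightarrow S_t$ (which uses $\alpha^+\ne\alpha^-$ as boundary components of $S_E$), the verification that the glued cylinder has modulus precisely $t_\alpha$ rather than some multiple of it, and the fact that $S_t$ is a hyperbolic Riemann surface so that Schwarz--Pick applies. One could repackage the estimate through extremal length instead --- an embedded annulus of modulus $t_\alpha$ forces $\mathrm{Ext}_{S_t}(\alpha_t^*)\le 1/t_\alpha$, which combined with Maskit's comparison $\ell^\alpha(S_t)\le\pi\,\mathrm{Ext}_{S_t}(\alpha_t^*)$ gives the same conclusion --- but the direct Schwarz--Pick route keeps everything elementary and self-contained.
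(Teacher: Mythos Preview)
Your proof is correct and follows essentially the same route as the paper's: both compute the hyperbolic length of the core curve of the grafted cylinder $C_t^\alpha$ to be $\pi/t_\alpha$, apply the Schwarz--Pick lemma to the inclusion $C_t^\alpha \hookrightarrow S_t$, and then use that $\alpha_t^*$ minimizes length in its free homotopy class. The paper's version is terser (it uses the model $\ell\Sp^1 \times (-\tfrac{\pi}{2},\tfrac{\pi}{2})$ with metric $|dz|/\cos y$ and notes that $\alpha_t$ itself is the core geodesic), but the argument is the same.
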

\begin{proof}
For $\ell>0$, let $\ell \Sp^1 = \R / \ell \Z$. The hyperbolic metric on $\ell\Sp^1 \times ( -\frac{\pi}{2} , \frac{\pi}{2})$ is given by $ds = \sqrt{dx^2 + dy^2}/\cos y.$ There is a unique simple closed geodesic in this metric, namely the core curve $\ell\Sp^1 \times \{ 0 \}$, which has length $\ell$.

Assume that $t_\alpha > 0$. By the Schwarz lemma, the inclusion $C_t^\alpha \hookrightarrow S_t$ is a contraction with respect to the hyperbolic metrics. Since $C_t^\alpha$ is conformally equivalent to the cylinder $\frac{\pi}{t_\alpha} \Sp^1 \times (-\frac{\pi}{2} , \frac{\pi}{2})$, we have
\begin{equation*}
\pi/t_\alpha = \length(\alpha_t, C_t^\alpha) \geq \length(\alpha_t, S_t) \geq \length(\alpha_t^*, S_t)= \ell^\alpha(S_t). \qedhere
\end{equation*}
\end{proof}

\subsection*{The modulus of a cylinder}

A \textit{conformal metric} on a Riemann surface $Z$ is a Borel measurable function
$$
\rho : TZ \to \R_{\geq 0}
$$
such that $\rho(\lambda v) = |\lambda| \rho(v)$ for every $\lambda \in \Co$ and every $v \in TZ$. Let $\Gamma$ be any family of curves in $Z$. We say that a conformal metric $\rho$ is \textit{admissible for $\Gamma$} if the length
$$
\length(\gamma,\rho) := \int \rho(\gamma'(t)) \,dt
$$
is at least $1$ for every locally rectifiable $\gamma \in \Gamma$. The \textit{modulus} of $\Gamma$ is defined as
$$
\Mod \Gamma := \inf \left\{\, \int_Z \rho^2 : \text{$\rho$ is admissible for $\Gamma$} \,\right\}.
$$
This is the same as the reciprocal of the extremal length of $\Gamma$ and is a conformal invariant.

Any Riemann surface which is homotopy equivalent to a circle will be called a \textit{cylinder} or an \textit{annulus}. The modulus of a cylinder is by definition the modulus of the family of its essential loops. The cylinders $\Sp^1 \times \R$ and $\Sp^1 \times (0,\infty)$ have infinite modulus, and $\Sp^1 \times (0,m)$ has modulus $m$. Every cylinder is conformally equivalent to exactly one of these model cylinders.

\subsection*{Collars in grafted cylinders}

The collar lemma states that if $\gamma$ is a simple closed geodesic of length $\ell$ in a hyperbolic surface $Z$, the neighborhood $N$ of width 
$$
w=\sinh^{-1}(1 / \sinh(\ell /2) )
$$
about $\gamma$ in $Z$ is an embedded annulus. Assume that $\ell \in (0, \pi)$ and let $m$ be the modulus of $N$. Since the covering space of $Z$ corresponding to $\gamma$ is a cylinder of modulus $\pi / \ell$, we have $m \leq \pi / \ell$. On the other hand, we have
$$
\csc(\pi/2 - m\ell /2)=\sec(m \ell / 2) = \cosh w = \coth(\ell /2)\geq \csc(\ell /2)
$$
so that $m \geq \pi / \ell - 1.$ The details are left to the reader since a different proof of this lower bound can be found in \cite{DouadyHubbard}.

The next lemma shows that for $t$ large enough, there is a hyperbolic collar of large modulus about $\alpha_t^*$ which is not only embedded in $S_t$ but also contained in the grafted cylinder $C_t^\alpha$.

We need some more definitions first. For $s, t \in (\R_{\geq 0})^E$, let us say that $s \geq t$ if $s_\alpha \geq t_\alpha$ for every $\alpha \in E$. For every $t \in (\R_{\geq 0})^E$, let $X_t=S_t \setminus \bigcup_{\alpha \in E} \alpha_t$. There is a unique conformal embedding $g_t : X_t \to S_\infty$  such that the diagram
$$
\begin{tikzcd}[column sep=tiny]
X_t \arrow{rr}{g_t} & & S_\infty  \\
& S\setminus E \arrow{ul} \arrow{ur} &
\end{tikzcd}
$$
commutes, where the inclusions $S \setminus E \hookrightarrow X_t$ and $S \setminus E \hookrightarrow S_\infty$ come from the construction of $S_t$ and $S_\infty$. Observe that $g_t(X_t)$ exhausts $S_\infty$ as $t \to \infty^E$.


\begin{lem} \label{outside collar}
There exists a constant $c>1$ and a vector $T \in (\R_{\geq 0})^E$ such that for all $t\geq T$ and all $\alpha \in E$, the hyperbolic collar of modulus $\pi/\ell^\alpha(S_t)- c$ about $\alpha_t^*$ is contained in the grafted cylinder $C_t^\alpha$.
\end{lem}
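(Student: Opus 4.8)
The plan is to work in the annular cover of $S_t$ determined by $\alpha_t$ and to show that there the grafted cylinder takes up everything but a bounded amount.

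Fix $\alpha\in E$, let $Y_t\to S_t$ be the cover with deck group $\langle\alpha_t\rangle$, and identify $Y_t$ with $\Sp^1\times(0,M_t)$, where $M_t:=\pi/\ell^\alpha(S_t)$, carrying its flat metric; thus $\alpha_t^*$ lifts to the middle circle $\Sp^1\times\{M_t/2\}$, and for $0<m<M_t$ the hyperbolic collar of modulus $m$ about $\alpha_t^*$ is the symmetric sub-cylinder $\Sp^1\times(\tfrac{M_t-m}{2},\tfrac{M_t+m}{2})$. Since $C_t^\alpha\hookrightarrow S_t$ is $\pi_1$-injective and $\Mod C_t^\alpha=t_\alpha$, the grafted cylinder lifts isomorphically to a sub-cylinder $C_t'\subset Y_t$ of modulus $t_\alpha$, bounded by two embedded essential circles $\widehat\alpha^\pm_t$ (the lifts of the interior curves along which the surgery is performed). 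The two components of $Y_t\setminus\overline{C_t'}$, say $V_t^+$ adjacent to the top end and $V_t^-$ adjacent to the bottom end, are the annular covers at $\alpha^\pm$ of the subsurface(s) of $S_t$ lying outside $C_t^\alpha$. Gr\"otzsch's inequality for the three disjoint essential sub-cylinders $V_t^-,C_t',V_t^+$ already gives $M_t\ge t_\alpha+\Mod V_t^++\Mod V_t^-$, reproving Lemma~\ref{short geodesics}.

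The crux is the reverse, quantitative statement: there is a constant $C=C(S,E,\alpha)$ and a vector $T$ such that $\widehat\alpha^+_t\subset\Sp^1\times(M_t-C,M_t)$ and $\widehat\alpha^-_t\subset\Sp^1\times(0,C)$ for $t\ge T$. This is where the conformal embedding $g_t\colon X_t\to S_\infty$ enters. Lifting under the covering $\psi\colon\D\setminus\{0\}\to S_\infty$ of Lemma~\ref{lem:asymangle} associated with $C^+$, the map $g_t$ identifies $V_t^+$ together with the adjacent top half of the grafted cylinder with a region of $\D\setminus\{0\}$ exhausting a punctured neighbourhood of the puncture as $t\to\infty^E$, and it carries $\widehat\alpha^+_t$ to the lift $\widetilde\varphi(\partial\overline\D)$ of $\alpha^+$, a fixed compact curve, for \emph{every} $t$ (because $g_t$ restricts to the identity on the fixed surface $S_E$). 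Equivalently, as $t\to\infty^E$ the surface $Y_t$, pointed at $\widehat\alpha^+_t$, converges in the Carath\'eodory sense to $\D\setminus\{0\}$ with $\widehat\alpha^+_t\to\widetilde\varphi(\partial\overline\D)$. A normal-families (Koebe-distortion) argument then bounds, uniformly in large $t$, how far the curve $\widehat\alpha^+_t$ can reach away from the top end of $Y_t$; likewise for $\widehat\alpha^-_t$. Since $V_t^{\pm}$ is exactly the region of $Y_t$ between $\widehat\alpha^\pm_t$ and the corresponding end, this yields $V_t^+\subset\Sp^1\times(M_t-C,M_t)$ and $V_t^-\subset\Sp^1\times(0,C)$. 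I expect this to be the main obstacle: a bound on $\Mod V_t^{\pm}$ alone does not prevent $\widehat\alpha^\pm_t$ from dipping deep into $Y_t$, so one genuinely needs the rigidity coming from $g_t$ and from the fixed conformal type of the part of $S_t$ outside the grafted cylinders.

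Granting the claim, for $t\ge T$ we have
\[
C_t'=Y_t\setminus\bigl(\overline{V_t^+}\cup\overline{V_t^-}\bigr)\ \supseteq\ \Sp^1\times(C,M_t-C),
\]
and the right-hand side is precisely the hyperbolic collar of modulus $M_t-2C=\pi/\ell^\alpha(S_t)-2C$ about $\alpha_t^*$. Projecting $C_t'$ isomorphically onto $C_t^\alpha\subset S_t$, this collar lies inside the grafted cylinder; and since $\pi/\ell^\alpha(S_t)-c\le M_t-2C$ as soon as $c\ge 2C$, the collar of modulus $\pi/\ell^\alpha(S_t)-c$ does too. Taking $c$ to be the maximum over the finitely many $\alpha\in E$ of $\max\{2C,2\}$ (so that $c>1$), and $T$ correspondingly large, completes the proof. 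As a byproduct, $M_t-2C=\Mod\bigl(\Sp^1\times(C,M_t-C)\bigr)\le\Mod C_t'=t_\alpha$ gives $\pi/\ell^\alpha(S_t)\le t_\alpha+2C$, so the length of the pinched geodesic is controlled from below as well.
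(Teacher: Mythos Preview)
Your framework mirrors the paper's: pass to the annular cover $Y_t$, lift the grafted cylinder to $C_t'$, and show that $C_t'$ contains the central straight sub-cylinder of modulus $M_t-2C$. The reduction to the claim about $\widehat\alpha_t^\pm$ is correct, and the deduction of the lemma from the claim is fine (including the pleasant byproduct $\pi/\ell^\alpha(S_t)\le t_\alpha+2C$).

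The gap is precisely the claim, which you yourself flag as ``the main obstacle'' but do not prove. Your Carath\'eodory/normal-families outline is not wrong in spirit, but several steps are missing. First, the assertion that ``$g_t$ identifies $V_t^+$ together with the adjacent top half of the grafted cylinder with a region of $\D\setminus\{0\}$'' is imprecise: $g_t$ lives on $X_t\subset S_t$, while $V_t^+$ lives in the cover $Y_t$, and the covering map $Y_t\to S_t$ is \emph{not} injective on $V_t^+$ (which is the annular cover of an entire subsurface, not an embedded annulus). One can lift the composite $U_t^+\to S_t\xrightarrow{g_t}S_\infty$ to the parabolic cover $\D\setminus\{0\}$, but verifying that this lift is an embedding, choosing a consistent normalization, and then extracting a \emph{uniform} bound on the depth of $\widehat\alpha_t^+$ are all left undone. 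Second, when $|E|>1$ the conformal type of $V_t^+$ genuinely varies with $t$ (the other grafted cylinders sit inside the subsurface it covers), so there is no single fixed domain being embedded, and the compactness argument needs more care than indicated.

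The paper closes exactly this gap by a direct extremal-length estimate rather than a limiting argument. In the annular cover it looks at the set $I_t$ of heights $u$ for which the circle at height $u$ fails to lie in $C_t'$; the modulus of the corresponding family of circles (restricted to the standard embedded collar) equals the modulus of its image in $S_t$, which is contained in the family $\Gamma_t$ of closed curves homotopic to $\alpha_t$ but not contained in $C_t^\alpha$. Then $\Mod\Gamma_t$ is bounded by exhibiting an admissible metric: the hyperbolic metric on $S_\infty$ pulled back via $g_t$ has area at most $\area(S_\infty)$, and every curve in $\Gamma_t$ has length bounded below by a fixed $L>0$ in this metric (it must either lift to an essential loop exiting a fixed sub-half-cylinder $\Sp^1\times[u,\infty)$ in the cusp cover, or cross a fixed compact sub-cylinder). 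This yields the constant $c$ explicitly, with no convergence or compactness needed. Replacing your Carath\'eodory sketch with this extremal-length bound is the cleanest way to complete your argument.
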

\begin{proof}
Fix $\alpha \in E$ and let $\ell_t=\ell^\alpha(S_t)$ and $C_t=C_t^\alpha$. Assume that $t_\alpha > 1$ so that $\ell_t < \pi$ by Lemma \ref{short geodesics}.

Let $\psi_t : \ell_t \Sp^1 \times (-\frac{\pi}{2},\frac{\pi}{2}) \to S_t$ be the annulus cover corresponding to $\alpha_t$. Consider the largest subset $J_t \subset (-\frac{\pi}{2},\frac{\pi}{2})$ such that
$\psi_t (\ell_t\Sp^1 \times J_t) \subset C_t$. To prove the lemma, we have to find $c>0$ such that $(-\frac{\pi-c \ell_t}{2} , \frac{\pi-c \ell_t}{2})$ is contained in $J_t$ for $t$ large enough.

It is easy to see that $J_t$ is open. Moreover, $J_t$ is connected. Indeed, if $u,v \in J_t$ and $u<v$, then the image curves $\psi_t (\ell_t \Sp^1 \times \{u\})$ and $\psi_t (\ell_t \Sp^1 \times \{v\})$ are homotopic to each other inside $C_t$, as they are both homotopic to $\alpha_t$. Such a homotopy lifts under $\psi_t$ to a homotopy between $\ell_t \Sp^1 \times \{u\}$ and $\ell_t \Sp^1 \times \{v\}$ inside $\psi^{-1}(C_t)$. This homotopy has to sweep all points in $\ell_t \Sp^1 \times [u,v]$. Therefore $\psi_t (\ell_t \Sp^1 \times [u,v]) \subset C_t$ and $[u,v] \subset J_t$. It follows that $J_t$ is an open interval.

Let $I_t = (-\frac{\pi}{2},\frac{\pi}{2}) \setminus J_t$. If we find $c>0$ such that $|I_t| \leq c \ell_t/2$, then the result follows. If $I_t'= I_t \cap (-\frac{\pi-\ell_t}{2},\frac{\pi-\ell_t}{2})$, then $|I_t|\leq |I_t'| + \ell_t$ so 
a bound of the form $|I_t'| \leq M \ell_t$ suffices, for we can then set $c:=2(M+1)$. The problem is thus reduced to finding an upper bound $M$ for $$\Mod(\ell_t \Sp^1 \times I_t') = |I_t'|/\ell_t.$$

Let $\Gamma_t'$ denote the family of circles $\ell_t \Sp^1 \times \{ u \}$ such that $u\in I_t'$. We have
$$
\Mod (\ell_t \Sp^1 \times I_t') = \Mod \Gamma_t' = \Mod \psi_t(\Gamma_t').
$$
The last equality holds because the restriction of $\psi_t$ to $\ell_t \Sp^1 \times (-\frac{\pi-\ell_t}{2},\frac{\pi-\ell_t}{2})$ is injective by the lower bound on the modulus of collars. This is why we replaced $I_t$ with $I_t'$.

Now let $\Gamma_t$ be the family of closed curves in ${S}_t$ which are homotopic to $\alpha_t$ but are not entirely contained in the grafted cylinder $C_t$. Observe that $\psi_t(\Gamma_t') \subset \Gamma_t$ by construction. Monotonicity of modulus thus implies that $\Mod \psi_t(\Gamma_t') \leq \Mod \Gamma_t.$

Let $\rho$ be the hyperbolic metric on $S_\infty$ and let $A$ be its area $\int_{S_\infty} \rho^2$ which is finite. Define the conformal metric $\rho_t$ on $S_t$ to be $g_t^* \rho$ on $X_t=S_t \setminus \bigcup_{\alpha \in E} \alpha_t$. The area of $\rho_t$ is at most $A$. Therefore, it suffices to find a lower bound $L>0$ for the length of the curves in $\Gamma_t$ in the metric $\rho_t$, for then rescaling the metric by $1/L$ will give an upper bound of $A/L^2$ for $\Mod \Gamma_t$. 

For every $\beta \in E$, there are two corresponding grafted half-infinite cylinders $C^\beta_+$ and $C^\beta_-$ in $S_\infty$. Let $\psi_\pm^\beta : \Sp^1 \times (0,\infty) \to S_\infty$ be the corresponding covering maps with sections $s_\pm^\beta : C_\pm^\beta \to \Sp^1 \times (0,\infty)$. By Riemann's removable singularity theorem, there exists a $u>0$ such that each image $s_\pm^\beta(C_\pm^\beta)$ contains a half-infinite cylinder of the form $\Sp^1 \times [u,\infty)$. Thus $\psi_{\pm}^\beta(\Sp^1 \times [u,\infty))$ is contained in $C_\pm^\beta$ for every $\beta \in E$.    

Let $L$ be twice the injectivity radius of $\Sp^1 \times (0,\infty)$ at the point $iu$ in the hyperbolic metric. Then every essential loop in $\Sp^1 \times (0,\infty)$ which is not entirely contained in $\Sp^1 \times [u,\infty)$ has length at least $L$. 

If $\gamma \in \Gamma_t$ is contained in $Y_t$, then $g_t(\gamma)$ is not contained in either $C_+^\alpha$ or $C_-^\alpha$ but is homotopic to one of them. Therefore $g_t(\gamma)$ lifts under the corresponding covering map $\psi_\pm^\alpha$ to an essential curve in $\Sp^1 \times (0,\infty)$ not entirely contained in $\Sp^1 \times [u,\infty)$. It follows that 
$$
\length(\gamma, \rho_t) = \length(g_t(\gamma),\rho)  \geq L.
$$

If $\gamma \in \Gamma_t$ is not contained in $X_t$, then a subarc $\omega$ of $\gamma$ has to cross half of a grafted cylinder, that is one component of $C_t^\beta \setminus \beta_t$ for some $\beta \in E$. If $t$ is large enough, then any such arc is longer than $L$ in the metric $\rho_t$. To see this, consider the union of cylinders $$K=\bigcup_{\beta \in E} \left( \psi_+^\beta(\Sp^1 \times [u,e^L u])\cup \psi_-^\beta(\Sp^1 \times[u,e^L u]) \right)$$
contained in $\bigcup_{\beta \in E} C^\beta_+ \cup C^\beta_-$. As $K$ is compact, it is contained in $g_t(X_t)$ for all large enough $t$. Then every arc crossing a component of $C_t^\beta \setminus \beta_t$ has to cross a component of $g_t^{-1}(K)$ so that
\begin{equation*}
\length(\gamma, \rho_t) \geq \length(\omega, \rho_t) = \length(g_t(\omega),\rho)  \geq L. \qedhere
\end{equation*}
\end{proof}

\subsection*{The non-squeezing lemma}

Any cylinder of large modulus, when embedded in the plane, must contain a large round annulus \cite[\S 4.11]{Ahlfors} \cite[\S 2.1]{McMullen}. We give a new elementary proof.

\begin{lem} \label{straightsubcylinder}
Every essential cylinder in $\Co / \Z$ of modulus $m>1$ contains a closed straight cylinder of modulus $(m-1)$.
\end{lem}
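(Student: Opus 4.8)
The plan is to argue by contradiction using the extremal length definition of modulus together with a length-area inequality. Let $A \subset \Co/\Z$ be an essential cylinder of modulus $m > 1$, and for each $y \in \R$ let $H_y = \{\, z \in \Co/\Z : \im z = y \,\}$ be the horizontal circle at height $y$. For a straight cylinder to be contained in $A$, we need some interval $(a,b) \subset \R$ with $b - a = m - 1$ such that $H_y \subset A$ for all $y \in (a,b)$. So suppose no such interval exists; I would then derive a contradiction with $\Mod A = m$. For each $y$, either $H_y \subset A$ or $H_y$ meets the complement $\Co/\Z \setminus A$; call a height \emph{good} if $H_y \subset A$ and \emph{bad} otherwise. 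The hypothesis says the set of good heights contains no open interval of length $m-1$, i.e. every subinterval of $\R$ of length $m-1$ contains a bad height. Equivalently, consecutive bad heights are never more than $m-1$ apart; but this alone isn't quite a contradiction, so I need to be more careful about how the bad heights interact with the essential curves of $A$.

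Here is the refinement. The idea is to build an admissible metric for the family $\Gamma$ of essential loops of $A$ whose area is too small. Let $Y$ be the (closed) set of bad heights. Since every interval of length $m-1$ contains a point of $Y$, I can cover $\R$ by countably many intervals $[y_k - (m-1)/2,\, y_k+(m-1)/2]$ centered at bad heights $y_k \in Y$, chosen so that the corresponding round annuli $R_k = \{\, |\im z - y_k| < (m-1)/2 \,\} \cap A$ do \emph{not} contain any essential loop of $A$ — because $H_{y_k}$ is punctured by the complement, any essential loop of $A$ that stays inside $R_k$ would have to wind around within a simply connected piece, which is impossible. Therefore every essential loop $\gamma$ of $A$ must exit each $R_k$ it enters, and in particular $\gamma$ cannot be confined to any single strip of height $m-1$ about a bad level; since these strips cover $\R$, the vertical extent of $\gamma$ (in the lift to $\Co$, after fixing a branch) is forced to be at least... and here is where I would run the length-area estimate: assign to $A$ the flat metric $\rho \equiv 1/(m-1)$ on the union of a suitable sub-collection of disjoint strips of height $m-1$ covering the vertical range where essential loops must travel. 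Each essential loop, being forced to traverse the full height of at least one such strip of height $m-1$, has $\rho$-length at least $1$, so $\rho$ is admissible; but the $\rho$-area is $(1/(m-1))^2 \cdot (\text{total strip area})$, and if the strips can be taken disjoint and their total height is at most $m-1 + \varepsilon$... this would give area roughly $1/(m-1) < 1/m \cdot (\text{something})$, contradicting $\Mod A = m$ once the bookkeeping is done. More cleanly: I expect the right statement is that the subset of $A$ lying over good heights, together with the constraint that essential loops cannot be trapped, forces $\Mod A \le 1 + \Mod(\text{largest straight subcylinder})$, which is the contrapositive of the lemma.

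The main obstacle I anticipate is making the covering argument quantitatively sharp — getting exactly $m-1$ rather than $m - C$ for some larger constant $C$. The naive covering by strips centered at bad heights wastes a factor of $2$, so I would instead want to choose the bad heights greedily from left to right: let $y_0$ be the infimum of bad heights that essential loops must pass, then $y_1$ the next bad height with $y_1 > y_0$ chosen as large as possible subject to $(y_0, y_1)$ containing no bad height — wait, that interval is then all good, so $y_1 - y_0 \le m-1$ would be automatic only if we're assuming the conclusion fails, which we are. Iterating, I get bad heights $y_0 < y_1 < y_2 < \cdots$ with each gap $< m-1$ and each $(y_{j},y_{j+1})$ entirely good. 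Now the key geometric input: an essential loop of $A$ projecting to heights in $(y_j, y_{j+1})$ lies in a straight subcylinder, but by assumption every straight subcylinder has modulus $< m-1$, so no essential loop is confined to heights in a single gap; hence every essential loop must cross at least one bad level $y_j$, and since at $y_j$ the circle $H_{y_j}$ is incomplete in $A$, the loop must also travel vertically a definite amount near $y_j$. Summing the contributions and optimizing the test metric (concentrating it near the bad levels, with total mass controlled by the number of crossings) should yield $\Mod A \le m-1$, the contradiction. The delicate point is ensuring the test metric is admissible for \emph{all} essential loops simultaneously while keeping its area below $m$; I would handle this by noting that any essential loop crosses each bad level an odd number of times, hence at least once, and placing a bump of the metric of total integral $1$ transverse to each bad level in such a way that the bumps have disjoint support and total area $\le m - 1$. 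If this disjointness cannot be arranged directly, I would instead pass to the extremal (flat) metric on $A$ and use that its horizontal "width function" must vanish at every bad height, forcing, by a direct integration, the modulus to drop by at least the total gap length minus the longest gap, which is what we want.
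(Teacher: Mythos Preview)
Your proposal contains a genuine error. You write: ``by assumption every straight subcylinder has modulus $< m-1$, so no essential loop is confined to heights in a single gap.'' This is false. A straight cylinder of \emph{any} positive modulus contains essential loops (its horizontal circles), so the smallness of the modulus in no way prevents essential loops of $A$ from living entirely inside a single good strip. Consequently your admissible-metric constructions collapse: an essential loop sitting at a single good height has $\rho$-length zero for any metric supported near bad levels, so none of your proposed metrics is admissible.

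The structural fact you are missing is that the set of good heights is automatically a \emph{single interval}. The complement of $A$ in the sphere has two connected components, one on each side; their projections to the $y$-axis are therefore connected, i.e.\ half-lines, and the good heights are exactly the gap between them. The paper makes this explicit by passing to the plane via $z\mapsto e^{2\pi i z}$, letting $M\ni 0$ and $N\ni\infty$ be the complementary components, and taking $r=\sup_{z\in M}|z|$, $R=\inf_{z\in N}|z|$ (with a short reduction to ensure $r\le R$). Back in cylindrical coordinates this gives heights $u\le v$ with $\Sp^1\times(u,v)\subset A$ and $v-u\le b$ by hypothesis. Crucially, every essential loop of $A$ must meet both $\Sp^1\times(u,\infty)$ and $\Sp^1\times(-\infty,v)$, because otherwise it would separate $M$ or $N$. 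The admissible metric is then simply the euclidean metric restricted to $\Sp^1\times(u-\tfrac12,\,v+\tfrac12)$: a loop contained in this band has length $\ge 1$ since it is essential, and a loop leaving the band must still return to height $u$ or $v$, hence crosses a strip of height $\tfrac12$ twice. The area is $v-u+1\le b+1$, giving $\Mod A\le b+1$, which is the contrapositive. Your greedy selection of bad heights is unnecessary once you see there are only two ``extremal'' bad heights, $u$ and $v$, and the whole argument hinges on the topological fact that essential loops must visit both sides of the good band.
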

\begin{proof}
Let $b>0$ and let $X$ be an essential cylinder in $\Co / \Z$ which contains no closed straight cylinder of modulus $b$. We want to show that $\Mod X < b+1$. 

For convenience of notation, we transfer to the planar setting using the conformal map $h : \Co / \Z  \to  \Co\setminus \{0\}$ given by $h(z)=e^{2\pi i z}$. The complement of $Y=h(X)$ in the Riemann sphere has two connected components. Let $M$ be the component containing $0$ and $N$ the one containing $\infty$. For $I\subset [0,\infty]$, let us write $AI= \{\, z \in \CHAT : |z| \in I \,\}.$ There is a smallest $r\in(0,\infty)$ such that $M \subset A[0,r]$ and a largest $R\in(0,\infty)$ such that $N \subset A[R,\infty]$. Note that no essential loop in $Y$ is entirely contained in $A[0,r]$ or $A[R,\infty]$, for otherwise $M$ or $N$ would be disconnected. It may happen that $r>R$ if $M$ and $N$ intertwine. If this is the case, let $M'$ be the connected component of $M\cap A[0,R]$ containing the origin and consider the cylinder $Y':=\widehat{\Co} \setminus (M'\cup N)$. Remark that $h^{-1}(Y')$ does not contain any straight cylinder. Also, $Y \subset Y'$ and the inclusion is essential, so that $\Mod Y \leq \Mod Y'$ by monotonicity of modulus. Since our goal is to bound $\Mod Y$ from above, we can replace $Y$ with $Y'$. In other words, we can assume that $r\leq R$. 

We now go back to cylindrical coordinates. Let $u= \log r/2\pi$ and $v= \log R/2\pi$. Then every essential loop in $X$ intersects $S^1 \times (u,\infty)$ and $S^1 \times (-\infty,v)$ by a previous observation. Moreover, $S^1 \times (u,v)$ is contained in $X$ since $A(r,R)\subset Y$. By assumption, $0\leq v-u \leq b$.

Consider the cylinder $Z=S^1 \times (u-1/2, v+1/2)$. We define the conformal metric $\rho$ to be the euclidean metric $|dz|$ on $X\cap Z$ and zero on $X \setminus Z$. Let $\gamma$ be an essential loop in $X$. We claim that $\length(\gamma,\rho) \geq 1$. If $\gamma$ is contained in $Z$, this is obvious. If not, then $\gamma$ intersects $S^1 \times (-\infty,u-1/2]$ or $S^1 \times [v+1/2,\infty)$. Without loss of generality, assume that $\gamma$ intersects the former. Since $\gamma$ has to intersect $S^1 \times (u,\infty)$ as well, its length is at least $1$, for two disjoint subarcs must cross $S^1 \times(u-1/2,u]$.

Therefore, $\rho$ is admissible for the family of essential loops in $X$ and we have
\begin{equation*}
\Mod X \leq \int_X \rho^2 \leq v-u+1 \leq b+1.
\end{equation*}
It is easy to see that $\rho$ is not extremal and hence the strict inequality holds.

\end{proof}

\begin{remark}
Among essential cylinders in $\Co\setminus \{0\}$ which do not contain any closed round annulus of modulus $b$ centered at the origin, the one with largest modulus is the Teichm\"uller annulus $\Co \setminus [-1,0] \cup [e^{2\pi b} , \infty)$ The latter has modulus between $b+\frac{4\log 2}{2\pi}$ and $b+\frac{5\log 2}{2\pi}$ \cite[\S 4.12]{Ahlfors}.
\end{remark}

\subsection*{Getting across grafted cylinders}

We are ready to prove that the hyperbolic distance across either half of the grafted cylinder $C_t^\alpha$ goes to infinity as $t \to \infty^E$.

\begin{proof}[Proof of Lemma \ref{distanceacross}]
Let $\ell_t=\ell^\alpha(S_t)$, $C_t=C_t^\alpha$ and let $\psi_t : \ell_t\Sp^1 \times (-\frac{\pi}{2},\frac{\pi}{2}) \to S_t$ be annulus cover corresponding to $\alpha_t$. Also let $c$ and $T$ be as in Lemma \ref{outside collar}.

Let $C_t^\pm$ denote the two components of $C_t \setminus \alpha_t$. Each of $C_t^\pm$ has modulus $t_\alpha / 2$. The cylinders $C_t^\pm$ lift to the covering annulus $\ell_t\Sp^1 \times (-\frac{\pi}{2},\frac{\pi}{2})$. By Lemma \ref{straightsubcylinder}, those lifts contain straight cylinders of of modulus $b_t=t_\alpha / 2 - 1$ and hence euclidean height $\ell_t b_t$. Therefore any path crossing one of $C_t^\pm$ lifts to a path in $\ell_t\Sp^1 \times (-\frac{\pi}{2},\frac{\pi}{2})$ with endpoints separated by a straight cylinder of euclidean height $\ell_t b_t$. By Lemma \ref{outside collar}, if $t\geq T$ then one endpoint is also outside the collar $\ell_t\Sp^1 \times (-\frac{\pi - c \ell_t}{2},\frac{\pi - c \ell_t}{2})$. Assume that $t_\alpha$ is large enough so that $\pi-c\ell_t > 0$. The length of any crossing path is then at least
$$
s=\int_u^v \frac{1}{\cos y} \mathrm{d}y,
$$
where 
$$u = \frac{\pi-\ell_t (c+2b_t)}{2} \mbox{ and } v =  \frac{\pi-\ell_t c }{2}. $$ 

Since $1/\cos y \geq 2 / (\pi - 2|y|)$ for $y \in (-\frac{\pi}{2},\frac{\pi}{2})$, we have $$ s \geq \int_{u}^{v}\frac{2 \mathrm{d}y}{\pi - 2y} = \int_{c }^{c+2b_t}\frac{\mathrm{d}y}{y} = \log(1+2b_t/c).$$ This lower bound goes to infinity as $t \to \infty^E$.
\end{proof}

\section{Convergence of surfaces} \label{sectiongc}

The goal of this section is to prove that $S_t$ converges geometrically to $S_\infty$ as $t \to \infty^E$, which is the content of Proposition \ref{geomconv}.

\subsection*{Gromov convergence}

Fix some point $x\in S \setminus E$ and some nonzero tangent vector $v$ at $x$. For every $t \in (\R_{\geq 0})^E \cup \{\infty\}$, denote by $x_t$ and $v_t$ the images of $x$ and $v$ under the inclusion $S\setminus E \hookrightarrow S_t$. 

For every $t \in (\R_{\geq 0})^E \cup \{\infty\}$, let $p_t : \D \to S_t$ be the unique holomorphic covering map such that $p_t(0)=x_t$ and $(\mathrm{d}_0 p_t)(\partial / \partial z)=\lambda v_t$ for some $\lambda >0$. For every $t \in (\R_{\geq 0})^E$, let $\widetilde X_t$ be the connected component of $p_t^{-1}(X_t)$ containing the origin. We want to lift the conformal embedding $g_t: X_t \to S_\infty$ to universal covers. This requires the following topological lemma.

\begin{lem} \label{ess subsurface}
The following statements hold for every $t \in (\R_{\geq 0})^E$ :
\begin{enumerate}
\item $\widetilde X_t$ is simply connected;
\item the inclusion $i: X_t \to S_t$ induces an injective homomorphism $$i_* :\pi_1(X_t,x_t) \to \pi_1(S_t,x_t);$$
\item the embedding $g_t:X_t \to S_\infty$ induces an isomorphism $$(g_t)_*: \pi_1(X_t,x_t) \to \pi_1(S_\infty,x_\infty).$$
\end{enumerate} 
\end{lem}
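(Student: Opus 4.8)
The plan is to deduce statement (1) from statement (2) by a standard covering-space fact, to prove (2) by passing to the universal cover and invoking convexity, and to handle (3) separately via a chain of homotopy equivalences through the bordered surface $S_E$. First I would record why (1) and (2) are equivalent. The restriction of $p_t$ to $\widetilde X_t$ is a covering map onto the component $X_t^0$ of $X_t$ containing $x_t$, and it is the covering associated to the subgroup $\ker\big(i_*:\pi_1(X_t,x_t)\to\pi_1(S_t,x_t)\big)$: a loop in $X_t^0$ based at $x_t$ lifts, starting at the origin, to a path that remains in $\widetilde X_t$ and closes up precisely when the loop is nullhomotopic in $S_t$. Hence $\pi_1(\widetilde X_t,0)\cong\ker(i_*)$, so $\widetilde X_t$ is simply connected if and only if $i_*$ is injective. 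It therefore suffices to prove (2) and (3).

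For (2), I would first use that grafting does not change the topology: $S_t$ is homeomorphic to $S$ compatibly with the markings, and under this homeomorphism the central circle $\alpha_t=\Sp^1\times\{t_\alpha/2\}$ of the grafted cylinder $C_t^\alpha$ corresponds to $\alpha\in E$. Since the curves of $E$ are simple, essential and non-peripheral, so are the $\alpha_t$, and being disjoint they form a multicurve in $S_t$; this multicurve is isotopic to the union $\bigcup_{\alpha\in E}\alpha_t^*$ of the corresponding simple closed geodesics for the hyperbolic metric uniformized by $p_t$. As injectivity of the $\pi_1$-map into $\pi_1(S_t)$ is an isotopy invariant, I may verify it for the subsurface $S_t\setminus\bigcup_{\alpha\in E}\alpha_t^*$ instead of $X_t$. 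Now $p_t^{-1}\big(\bigcup_{\alpha\in E}\alpha_t^*\big)$ is a disjoint union of complete geodesics in $\D$, so every component of its complement is an intersection of open half-planes, hence convex, hence simply connected; by the equivalence just recorded this means exactly that $\pi_1\big(S_t\setminus\bigcup_{\alpha\in E}\alpha_t^*\big)\to\pi_1(S_t)$ is injective, which gives (2), and then (1).

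For (3), the commuting triangle in the statement reads $g_t\circ j_t=k$, where $j_t:S\setminus E\hookrightarrow X_t$ and $k:S\setminus E\hookrightarrow S_\infty$ are the canonical inclusions, so it is enough to show that $(j_t)_*$ and $k_*$ are isomorphisms. Both factor through $S_E$, and I would check: the inclusion $S\setminus E\hookrightarrow S_E$ of the interior of the bordered surface is a homotopy equivalence; the surface $X_t$ is obtained from $S_E$ by attaching a half-open cylinder $\Sp^1\times[0,t_\alpha/2)$ to each of the two boundary circles associated with each $\alpha\in E$, while $S_\infty$ is obtained from $S_E$ by attaching a half-infinite cylinder $\Sp^1\times[0,\infty)$ to each such circle; in both cases the attached pieces strong-deformation-retract onto their gluing circles, so $S_E\hookrightarrow X_t$ and $S_E\hookrightarrow S_\infty$ are deformation retracts. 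Hence $(j_t)_*$ and $k_*$, and therefore $(g_t)_*=k_*\circ(j_t)_*^{-1}$, are isomorphisms.

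The substance is entirely standard surface topology, and the steps needing care are bookkeeping ones: pinning down the direction of the kernel in the covering-space fact, invoking the classical fact that a multicurve is simultaneously isotopic to its geodesic representative, and the elementary verifications that including the interior of a bordered surface and attaching half-open or half-infinite cylinders along boundary circles are homotopy equivalences. The only conceptual convenience is the equivalence of (1) and (2), which lets me establish $\pi_1$-injectivity in whatever hyperbolic picture is convenient rather than reason about $\widetilde X_t$ directly in the uniformizing one.
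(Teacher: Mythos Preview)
Your proposal is correct and follows essentially the same approach as the paper: both establish (1) and (2) together via the covering-space identification $\pi_1(\widetilde X_t)\cong\ker(i_*)$ and the convexity of components of $\D\setminus p_t^{-1}(\bigcup\alpha_t^*)$ after isotoping the $\alpha_t$ to their geodesic representatives, and both handle (3) by deformation-retracting the attached cylinders. The only differences are organizational—the paper lifts the isotopy homeomorphism to $\D$ and argues about $\widetilde X_t$ directly rather than transferring $\pi_1$-injectivity across the isotopy, and for (3) it retracts $g_t(X_t)$ and $S_\infty$ onto a common subset rather than factoring explicitly through $S_E$—but the content is the same.
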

\begin{proof}

Let $h:S_t \to S_t$ be a homeorphism that maps each $\alpha_t$ to its geodesic representative $\alpha_t^*$ and lift $h$ to a homeomorphism $\widetilde h : \D \to \D$. 
 
The set $\widetilde{h}(\widetilde X_t)$ is a connected component of $\D \setminus p_t^{-1}(\bigcup_{\alpha \in E} \alpha_t^*)$ and is therefore hyperbolically convex. Since $\widetilde{h}$ is a homeomorphism, $\widetilde X_t$ is contractible and in particular simply connected.

The kernel of $i_* :\pi_1(X_t,x_t) \to \pi_1(S_t,x_t)$ is equal to the image under $(p_t)_*$ of $\pi_1(\widetilde X_t,0)$, which is trivial by part (1). Hence $i_*$ is injective. 

The sets $g_t(X_t)$ and $S_\infty$ deformation retract onto a common closed subset. Such a deformation retraction is obtained by contracting vertical lines in the grafted half-cylinders. It follows that the inclusion of $g_t(X_t)$ into $S_\infty$ induces an isomorphism on fundamental groups. Since $g_t$ is a homeomorphism onto its image, $(g_t)_*: \pi_1(X_t,x_t) \to \pi_1(S_\infty,x_\infty)$ is an isomorphism.
\
\end{proof}

Standard covering space theory implies that we can lift $g_t$.

\begin{lem}
For every $t \in (\R_{\geq 0})^E$, the embedding $g_t: X_t \to S_\infty$ lifts to an injective holomorphic map $\widetilde{g_t} : \widetilde X_t \to \D$ with $\widetilde{g_t}(0)=0$ and $\widetilde{g_t}'(0) > 0$.
\end{lem}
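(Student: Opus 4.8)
The statement to prove is that the conformal embedding $g_t : X_t \to S_\infty$ lifts to an injective holomorphic map $\widetilde{g_t} : \widetilde X_t \to \D$ with $\widetilde{g_t}(0)=0$ and $\widetilde{g_t}'(0)>0$. This is a routine application of the lifting criterion from covering space theory, so the proof should be short; the three parts of Lemma \ref{ess subsurface} have been set up precisely to make the hypotheses of that criterion hold.

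First I would recall the setup: $p_t : \D \to S_t$ is the universal covering with $p_t(0)=x_t$, and its restriction to $\widetilde X_t$ is a covering map $\widetilde X_t \to X_t$, because $\widetilde X_t$ is a connected component of $p_t^{-1}(X_t)$ and $X_t$ is open. By part (1) of Lemma \ref{ess subsurface}, $\widetilde X_t$ is simply connected, so $p_t|_{\widetilde X_t} : \widetilde X_t \to X_t$ is the universal covering of $X_t$. Similarly $p_\infty : \D \to S_\infty$ is the universal covering of $S_\infty$. Now consider the composition $g_t \circ (p_t|_{\widetilde X_t}) : \widetilde X_t \to S_\infty$. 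Since $\widetilde X_t$ is simply connected, the standard lifting criterion guarantees that this map lifts through $p_\infty$ to a holomorphic map $\widetilde{g_t} : \widetilde X_t \to \D$; after composing with a rotation (equivalently, choosing the lift appropriately among the $\pi_1(S_\infty)$-orbit), I can arrange $\widetilde{g_t}(0)=0$, and after a further rotation that $\widetilde{g_t}'(0)>0$ — here I should note that $\widetilde{g_t}'(0)\neq 0$ because $g_t$ and the two coverings are all local biholomorphisms, so the composition is a local biholomorphism at $0$.

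It remains to check injectivity of $\widetilde{g_t}$. Suppose $\widetilde{g_t}(a)=\widetilde{g_t}(b)$ for $a,b\in\widetilde X_t$. Applying $p_\infty$ gives $g_t(p_t(a))=g_t(p_t(b))$, and since $g_t$ is injective (it is an embedding), $p_t(a)=p_t(b)$ in $X_t$. Thus $a$ and $b$ differ by a deck transformation $\gamma$ of $p_t|_{\widetilde X_t}$, i.e.\ an element of $\pi_1(X_t,x_t)$ acting on $\widetilde X_t$. I want to show $\gamma$ is trivial. The key point is that $\widetilde{g_t}$ is equivariant: it intertwines the deck action of $\pi_1(X_t,x_t)$ on $\widetilde X_t$ with the deck action of $(g_t)_*\pi_1(X_t,x_t)=\pi_1(S_\infty,x_\infty)$ (equality by part (3)) on $\D$. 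Since $\widetilde{g_t}(\gamma\cdot a)=\widetilde{g_t}(a)$, equivariance forces $(g_t)_*(\gamma)$ to fix $\widetilde{g_t}(a)\in\D$; as a nontrivial deck transformation of the disk has no fixed points, $(g_t)_*(\gamma)$ is trivial, hence $\gamma$ is trivial by injectivity of $(g_t)_*$. Therefore $a=b$ and $\widetilde{g_t}$ is injective.

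**Main obstacle.** There is no serious obstacle: the entire difficulty has been front-loaded into Lemma \ref{ess subsurface}. The only points requiring a little care are (i) verifying that $p_t$ restricted to the component $\widetilde X_t$ is genuinely a covering of $X_t$ onto all of $X_t$ (standard: restriction of a covering to a saturated-enough open set, combined with connectedness of $X_t$), and (ii) getting the equivariance of $\widetilde{g_t}$ correct so that the injectivity argument goes through cleanly — this is where part (3) of the lemma (that $(g_t)_*$ is an \emph{isomorphism}, not merely injective) is used, since it identifies the image deck group with the full deck group of $S_\infty$ and lets me conclude via absence of fixed points in $\D$.
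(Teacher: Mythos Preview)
Your proposal is essentially correct and follows the same outline as the paper: use simple connectedness of $\widetilde X_t$ (Lemma~\ref{ess subsurface}(1)) to lift $g_t \circ p_t$ through $p_\infty$, then verify injectivity. For injectivity the paper takes a slightly different route: it first shows that the image $\widetilde{g_t}(\widetilde X_t)$ is simply connected (using that $j_*$ is bijective, i.e.\ part~(3)), and then lifts $g_t^{-1}\circ p_\infty$ back through $p_t$ to produce an explicit left inverse $h_t$ of $\widetilde{g_t}$. Your equivariance-plus-freeness argument is an equally standard alternative and uses the same ingredients.

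One point needs correction. You cannot perform a ``further rotation'' to arrange $\widetilde{g_t}'(0)>0$: once the lift is pinned down by $\widetilde{g_t}(0)=0$, it is unique (the only deck transformation of $p_\infty$ fixing $0\in\D$ is the identity), so there is no remaining freedom. Positivity of $\widetilde{g_t}'(0)$ is instead automatic from the normalizations already in place: $p_t$ and $p_\infty$ were chosen so that $(\mathrm d_0 p_t)(\partial/\partial z)$ is a positive multiple of $v_t$ and $(\mathrm d_0 p_\infty)(\partial/\partial z)$ a positive multiple of $v_\infty$, and $g_t$ sends $v_t$ to $v_\infty$; the chain rule then forces $\widetilde{g_t}'(0)>0$. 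This is how the paper argues, and it is what you should say in place of the rotation remark.
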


\begin{proof}
Since $\widetilde X_t$ is simply connected and $p_\infty$ is a holomorphic covering map, the composition $g_t\circ p_t: \widetilde X_t\to S_\infty$ lifts to a holomorphic map $\widetilde{g_t}:\widetilde X_t \to \D$ fixing the origin. The derivative $\widetilde{g_t}'(0)$ is positive because of the normalization of the covering maps and the fact that $g_t$ sends $v_t$ to $v_\infty$.

We claim that $\widetilde{g_t}(\widetilde X_t)$ is simply connected. First observe that the restriction $p_\infty : \widetilde{g_t}(\widetilde X_t) \to g_t(X_t)$ is a covering map, and hence $(p_\infty)_*$ is injective on $\pi_1(\widetilde{g_t}(\widetilde X_t),0)$. By covering space theory, the image of $\pi_1(\widetilde{g_t}(\widetilde X_t),0)$ in $\pi_1(g_t(X_t),x_\infty)$ is equal to the kernel of the homomorphism  $j_*:\pi_1(g_t(X_t),x_\infty) \to \pi_1(S_\infty,x_\infty)$ induced by the inclusion map $j:g_t(X_t) \to S_\infty$. By the last lemma, $j_*$ is a bijection. Consequently, $\pi_1(\widetilde{g_t}(\widetilde X_t),0)$ is trivial.

Since $\widetilde{g_t}(\widetilde X_t)$ is simply connected and $p_t$ is a covering map, $g_t^{-1} \circ p_\infty : \widetilde{g_t}(\widetilde X_t) \to X_t$ lifts to a map $h_t : \widetilde{g_t}(\widetilde X_t) \to \widetilde X_t$ fixing the origin. Then $h_t \circ \widetilde{g_t}$ is a lift of the identity map on $X_t$ fixing the origin and is thus the identity. This proves that $\widetilde{g_t}$ is injective.
\end{proof}

Next, we need to know that the open sets $\widetilde X_t$ exhaust the unit disk as $t \to \infty ^E$.

\begin{lem} \label{exhaustion}
Let $r_t$ and $R_t$ be the euclidean radii of the largest disks centered at the origin contained in $\widetilde X_t$ and $\widetilde{g_t}(\widetilde X_t)$ respectively. Then $r_t \to 1$, $R_t \to 1$, and hence $\widetilde{g_t}'(0)\to 1$  as $t\to \infty^E$.
\end{lem}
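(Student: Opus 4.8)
The plan is to reduce everything to the statement $r_t \to 1$, since the rest follows by standard distortion estimates. First I would show $R_t \to 1$ given $r_t \to 1$: because $\widetilde{g_t}$ is an injective holomorphic map of $\widetilde X_t$ into $\D$ fixing the origin with positive derivative, and because $\widetilde X_t$ contains the disk of radius $r_t$, the Koebe distortion theorem (or more precisely the Schwarz lemma together with the Koebe $1/4$-theorem applied to the restriction of $\widetilde{g_t}$ to this disk) forces $\widetilde{g_t}'(0) \to 1$ and $R_t \to 1$ as $r_t \to 1$. Concretely, the Schwarz lemma applied to $\widetilde{g_t}$ gives $\widetilde{g_t}'(0) \leq 1$, while the Koebe $1/4$-theorem applied to $z \mapsto \widetilde{g_t}(r_t z)/r_t$ shows its image — hence $\widetilde{g_t}(\widetilde X_t)$ — contains the disk of radius $r_t \widetilde{g_t}'(0)/4$; but $\widetilde{g_t}(\widetilde X_t) \subset \D$, and one also needs a matching lower bound on $\widetilde{g_t}'(0)$. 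That lower bound comes from the fact that $\widetilde{g_t}$ is a \emph{biholomorphism} onto its image and the image is contained in $\D$; applying the Schwarz lemma to the inverse map restricted to a disk of radius $R_t$ around $0$ gives $(\widetilde{g_t}^{-1})'(0) \leq 1/R_t$, i.e. $\widetilde{g_t}'(0) \geq R_t$. Combining, once $r_t \to 1$ one gets $R_t \to 1$ and $\widetilde{g_t}'(0) \to 1$ by a squeeze.

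So the real content is $r_t \to 1$. Here I would argue by contradiction: suppose not, so along some subsequence with $t \to \infty^E$ there is a point $z_t \in \D \setminus \widetilde X_t$ with $|z_t|$ bounded away from $1$, say $|z_t| \le \rho < 1$. Passing to a further subsequence, $z_t \to z_\infty$ with $|z_\infty| \le \rho$. Now $\widetilde X_t$ is the component of $p_t^{-1}(X_t)$ containing $0$, and $S_t \setminus X_t = \bigcup_{\alpha \in E} \alpha_t$, so $z_t$ lies over one of the central circles $\alpha_t \subset C_t^\alpha$, or on the boundary of $\widetilde X_t$ within $\D$ coming from such a circle. The key geometric input is Lemma \ref{distanceacross}: the hyperbolic distance in $S_t$ from $\alpha_t$ to $\partial C_t^\alpha$ — and in particular to the fixed basepoint region around $x_t = p_t(0)$, once $t$ is large — goes to infinity. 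Since $p_t$ is a hyperbolic local isometry (with the hyperbolic metric on $\D$ of curvature $-1$, rescaled appropriately, but conformal covering maps are hyperbolic isometries), the hyperbolic distance in $\D$ from $0$ to $z_t$ equals the hyperbolic distance in $S_t$ from $x_t$ to $p_t(z_t) \in \alpha_t$, which therefore tends to $\infty$. But a point at bounded Euclidean distance from the origin inside $\D$ is at bounded hyperbolic distance from the origin — contradiction. Hence $r_t \to 1$.

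I expect the main obstacle to be making the "distance from $x_t$ to $\alpha_t$ tends to infinity" step fully rigorous, since Lemma \ref{distanceacross} is phrased in terms of the distance \emph{across} $C_t^\alpha \setminus \alpha_t$, not the distance from $x_t$. The fix is geometric convergence of the \emph{other} parts of the surface: for large $t$ the basepoint $x_t$, together with a fixed compact piece of $S \setminus E$ containing it, embeds isometrically-up-to-controlled-distortion into $S_\infty$ via $g_t$, so $x_t$ stays at bounded distance from $\partial C_t^\alpha$ uniformly in $t$; then the triangle inequality with Lemma \ref{distanceacross} gives $d_{S_t}(x_t, \alpha_t) \to \infty$. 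More precisely, since $g_t(X_t)$ exhausts $S_\infty$ and $g_t$ is conformal, one fixes a compact $K \subset S_\infty$ containing a neighborhood of $x_\infty$ and all the images $s_\pm^\beta(C_\pm^\beta)$'s complement region, so that for $t$ large $g_t^{-1}(K) \supset $ a fixed-size collar neighborhood of the boundary circles $\alpha^\pm$ in $X_t$; any path from $x_t$ to $\alpha_t$ must cross from this fixed region across $C_t^\alpha \setminus \alpha_t$, and Lemma \ref{distanceacross} bounds the crossing part below by a quantity tending to infinity. A minor secondary point is to confirm that a point of $\widetilde X_t$'s Euclidean-boundary within $\D$ genuinely lies over $\bigcup_\alpha \alpha_t$ and not over the ideal boundary, which is immediate since $\widetilde X_t$ is relatively open in $\D$ with complement $p_t^{-1}(\bigcup_\alpha \alpha_t)$ plus possibly other components of $p_t^{-1}(X_t)$, none of which can be closer to $0$ than the nearest copy of $\bigcup_\alpha \alpha_t$.
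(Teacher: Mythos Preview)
Your argument for $r_t \to 1$ is correct and essentially the same as the paper's, though the paper handles the step ``$d_{S_t}(x_t,\bigcup_\alpha \alpha_t)\to\infty$'' more directly: since $x_t\in S\setminus E$ lies outside every open grafted cylinder $C_t^\alpha$, any path from $x_t$ to $\alpha_t$ must cross one component of $C_t^\alpha\setminus\alpha_t$, and Lemma~\ref{distanceacross} bounds that crossing length from below. No uniform control on $d_{S_t}(x_t,\partial C_t^\alpha)$ is needed.

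The genuine gap is in your deduction of $R_t\to 1$ from $r_t\to 1$. The inequalities you assemble are
\[
R_t \le \widetilde{g_t}'(0) \le 1/r_t, \qquad R_t \ge \tfrac{1}{4}\, r_t\, \widetilde{g_t}'(0),
\]
and you claim a squeeze. But these relations do not force $R_t\to 1$: the upper bound on $\widetilde{g_t}'(0)$ gives no lower bound on $R_t$, and combining the lower bound $\widetilde{g_t}'(0)\ge R_t$ with Koebe only yields $R_t\ge r_t R_t/4$, which is vacuous. Concretely, the univalent map $z\mapsto z/2$ on $\D$ satisfies all three inequalities with $r_t=1$, $R_t=1/2$, $\widetilde{g_t}'(0)=1/2$, so no function-theoretic distortion argument of this kind can work. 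You are trying to extract $R_t\to 1$ from information that simply does not determine it.

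The paper instead proves $R_t\to 1$ independently, using the exhaustion $g_t(X_t)\nearrow S_\infty$: for any closed disk $D\subset\D$ centered at $0$, the compact set $p_\infty(D)$ is eventually contained in $g_t(X_t)$, hence $D\subset p_\infty^{-1}(g_t(X_t))$, and since $\widetilde{g_t}(\widetilde X_t)$ is the component of the latter through the origin, $D\subset \widetilde{g_t}(\widetilde X_t)$. Once both $r_t\to 1$ and $R_t\to 1$ are established, the Schwarz inequalities $R_t\le\widetilde{g_t}'(0)\le 1/r_t$ give $\widetilde{g_t}'(0)\to 1$ --- that final squeeze is exactly what the paper does, and it matches your stated inequalities, but only after $R_t\to 1$ is proved by other means.
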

\begin{proof}
Let $d>0$. By Lemma \ref{distanceacross}, there is a $T \in (\R_{\geq 0})^E$ such that for every $t\geq T$, the hyperbolic distance $d(x_t,\bigcup_{\alpha \in E} \alpha_t)$ is bigger than $d$. Thus the disk $B_d(x_t)$ of radius $d$ about $x_t$ in $S_t$ is disjoint from $\bigcup_{\alpha \in E} \alpha_t$ and hence contained in $X_t$. Therefore, $B_d(0)$ is contained in $\widetilde X_t$. Since $d$ is arbitrary, $r_t \to 1$ as $t \to \infty^E$. 

Let $D$ be any closed disk inside $\D$ centered at the origin. The projection $p_\infty(D)$ is compact and thus contained in $g_t(X_t)$ for all large enough $t$. Hence $D$ is contained in $p_\infty^{-1}(g_t(X_t))$, and thus in $\widetilde{g_t}(\widetilde X_t)$, for all large enough $t$.  Therefore $R_t \to 1$ as $t\to \infty^E$.

By the Schwarz lemma, we have $R_t \leq \widetilde{g_t}'(0)\leq 1/r_t$ and hence $\widetilde{g_t}'(0)\to 1$  as $t\to \infty^E$.
\end{proof}

A normal families argument easily implies that $\widetilde{g_t}$ converges to the identity.

\begin{lem}\label{liftsconverge}
The maps $\widetilde{g_t}$ and $\widetilde{g_t}^{-1}$ converge locally uniformly to the identity map on $\D$ as $t \to \infty^E$.
\end{lem}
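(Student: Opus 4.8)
The plan is to run a standard normal-families argument and close it off with the Schwarz lemma. Since the compact-open topology on holomorphic functions on $\D$ is metrizable, it suffices to show that every sequence $t^{(n)} \to \infty^E$ admits a subsequence along which $\widetilde{g_{t^{(n)}}}$ converges locally uniformly to $\id$, and likewise for the inverse maps.

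First I would fix such a sequence, abbreviate $g_n = \widetilde{g_{t^{(n)}}}$ and $X_n = \widetilde{X}_{t^{(n)}}$, and recall from Lemma \ref{exhaustion} that the domains $X_n$ exhaust $\D$, so any compact $K \subset \D$ lies in $X_n$ for $n$ large. Because every $g_n$ maps into $\D$, the family $\{g_n\}$ is locally uniformly bounded, so Montel's theorem yields a subsequence (not relabeled) converging locally uniformly on $\D$ to a holomorphic $f : \D \to \overline{\D}$. The Weierstrass convergence theorem gives convergence of derivatives, so $f(0) = 0$ and $f'(0) = \lim_n g_n'(0) = 1$ by Lemma \ref{exhaustion}; in particular $f$ is nonconstant, hence injective by Hurwitz's theorem, and $f(\D)$ is open by the open mapping theorem, forcing $f(\D) \subset \D$. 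Thus $f$ is an injective holomorphic self-map of $\D$ fixing the origin with unit derivative there, so the Schwarz lemma forces $f = \id$. Since every subsequence of $\{g_n\}$ has a further subsequence converging locally uniformly to $\id$, the whole sequence does, and therefore $\widetilde{g_t} \to \id$ locally uniformly as $t \to \infty^E$.

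For the inverses I would repeat the same argument verbatim: $\widetilde{g_t}^{-1}$ is defined on $\widetilde{g_t}(\widetilde{X}_t)$, which exhausts $\D$ by Lemma \ref{exhaustion} (via $R_t \to 1$), takes values in $\D$, fixes the origin, and has derivative $1/\widetilde{g_t}'(0) \to 1$; Montel, Hurwitz, and Schwarz then give $\widetilde{g_t}^{-1} \to \id$ locally uniformly. The argument is essentially routine; the only inputs needing care are that the varying domains $\widetilde{X}_t$ and $\widetilde{g_t}(\widetilde{X}_t)$ genuinely exhaust $\D$ and that the normalized derivatives tend to $1$, both of which are already supplied by Lemma \ref{exhaustion}, together with the (standard) reduction from convergence of the net to convergence along arbitrary sequences. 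I do not expect a serious obstacle.
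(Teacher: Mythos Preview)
Your proposal is correct and follows essentially the same approach as the paper: apply Montel's theorem to get subsequential limits, use the normalization $\widetilde{g_t}(0)=0$ and $\widetilde{g_t}'(0)\to 1$ from Lemma \ref{exhaustion}, and conclude via the Schwarz lemma that any limit must be the identity. Your treatment is slightly more careful in one place (you invoke Hurwitz and the open mapping theorem to ensure the limit maps into $\D$ rather than $\overline{\D}$, which the paper glosses over), but the argument is otherwise identical.
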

\begin{proof}
Let us prove that $\widetilde{g_t} \to \id$. By Montel's theorem, every subnet of the net $(\widetilde{g_t})$ admits a subnet which converges locally uniformly to some holomorphic limit $g : \D \to \D$. We must have $g(0)=0$, since $\widetilde{g_t}(0)=0$ for all $t$. By Cauchy's integral formula, derivatives also converge pointwise, and so $g'(0)=1$ by the last lemma. By the Schwarz lemma, $g$ is the identity. Therefore, the net $(\widetilde{g_t})$ converges to the identity map. The proof for $\widetilde{g_t}^{-1}$ is identical.
\end{proof}

From this theorem and Cauchy's integral formula, it follows that the derivatives $\widetilde{g_t}'$ and $(\widetilde{g_t}^{-1})'$ converge locally uniformly to $1$. Therefore, $\widetilde{g_t}^{-1}$ and hence $g_t^{-1}$ is as close as we want to being a hyperbolic isometry on compact sets.

\begin{prop}
The surface $(S_t,x_t)$ converges to $(S_\infty,x_\infty)$ in Gromov's bilipschitz metric as $t\to \infty^E$.
\end{prop}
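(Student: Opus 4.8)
The plan is to show that $g_t^{-1}$ itself serves as the required almost-isometry between pointed balls. The maps $g_t^{-1}\colon g_t(X_t)\to X_t\subset S_t$ and $\widetilde{g_t}^{-1}$ are related by $p_t\circ\widetilde{g_t}^{-1}=g_t^{-1}\circ p_\infty$ on $\widetilde{g_t}(\widetilde X_t)$, which follows from the defining identity $p_\infty\circ\widetilde{g_t}=g_t\circ p_t$. Since $p_t$ and $p_\infty$ are local isometries for the hyperbolic metrics, the hyperbolic operator norm of $d(g_t^{-1})$ at a point $p_\infty(w)$ equals that of $d(\widetilde{g_t}^{-1})$ at $w$, namely $\tfrac{\rho_\D(\widetilde{g_t}^{-1}(w))}{\rho_\D(w)}\,|(\widetilde{g_t}^{-1})'(w)|$ where $\rho_\D$ is the Poincar\'e density. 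By Lemma~\ref{liftsconverge} and the convergence of derivatives noted afterwards, this tends to $1$ uniformly for $w$ in any fixed hyperbolic ball $B_\D(0,\rho)$; combined with Lemma~\ref{exhaustion} (so that $B_\D(0,\rho)\subset\widetilde{g_t}(\widetilde X_t)$ for $t$ large) I get: for every $\rho>0$ and $\epsilon>0$ there is $T$ such that $\|d(g_t^{-1})\|\in[1-\epsilon,1+\epsilon]$ on $B_{S_\infty}(x_\infty,\rho)=p_\infty(B_\D(0,\rho))$ whenever $t\geq T$.

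Symmetrically, I would establish the dual estimate on the $S_t$ side: for every $D>0$ and $\epsilon>0$ one has $\|dg_t\|\in[(1+\epsilon)^{-1},(1-\epsilon)^{-1}]$ on $B_{S_t}(x_t,D)$ for $t$ large. The point is that any $y\in B_{S_t}(x_t,D)$ can be joined to $x_t$ by a minimizing geodesic, which stays in $X_t$ for $t$ large (by Lemma~\ref{distanceacross}, exactly as in the proof of Lemma~\ref{exhaustion}) and lifts through $p_t$ to a path from $0$ inside $\widetilde X_t$ of the same length; hence $y$ has a lift $\widetilde y\in\widetilde X_t\cap B_\D(0,D)$, and the distortion of $g_t$ at $y$ is again computed by $\widetilde{g_t}$ evaluated on $B_\D(0,D)$, where it is under control. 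The two estimates are of course consistent on the overlap of their domains.

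It then remains to assemble these infinitesimal bounds into a genuine bilipschitz statement and do the radius bookkeeping. Fix $R>0$ and apply the first estimate with $\rho=3R$. For $a,b\in B_{S_\infty}(x_\infty,R)$ the minimizing $S_\infty$-geodesic stays in $B_{S_\infty}(x_\infty,3R)$, so pushing it through $g_t^{-1}$ gives $d_{S_t}(g_t^{-1}(a),g_t^{-1}(b))\leq(1+\epsilon)\,d_{S_\infty}(a,b)$; in particular $g_t^{-1}$ maps $B_{S_\infty}(x_\infty,R)$ into $B_{S_t}(x_t,2R)$. For the reverse inequality, a minimizing $S_t$-geodesic between $g_t^{-1}(a)$ and $g_t^{-1}(b)$ lies in $B_{S_t}(x_t,6R)$ by the triangle inequality, so the second estimate (with $D=6R$) applies and $g_t$ carries it to a path in $S_\infty$ from $a$ to $b$ of length at most $(1-\epsilon)^{-1}\,d_{S_t}(g_t^{-1}(a),g_t^{-1}(b))$, whence $d_{S_t}(g_t^{-1}(a),g_t^{-1}(b))\geq(1-\epsilon)\,d_{S_\infty}(a,b)$. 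Thus $g_t^{-1}$ restricts to a bilipschitz embedding of $B_{S_\infty}(x_\infty,R)$ into $S_t$ with constants $1+\epsilon$ and $(1-\epsilon)^{-1}$ sending $x_\infty$ to $x_t$; and since $B_{S_t}(x_t,R)\subset X_t$ for $t$ large, $g_t$ maps this ball into $B_{S_\infty}(x_\infty,(1+\epsilon)R)$, so the image of the embedding contains $B_{S_t}(x_t,R)$. This is exactly convergence in Gromov's bilipschitz metric. The one genuinely delicate point --- the reason Lemma~\ref{distanceacross} is needed rather than mere conformal exhaustion --- is ruling out that a minimizing $S_t$-geodesic between two nearby points takes a long excursion around or through the grafted cylinders; forcing $B_{S_t}(x_t,D)$ to lie inside $X_t$ is precisely what excludes this.
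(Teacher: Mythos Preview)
Your proof is correct and follows the same approach as the paper: exhibit $g_t^{-1}$ as the comparison map and compute its hyperbolic derivative via the lift $\widetilde{g_t}^{-1}$, which converges locally uniformly to the identity. You are considerably more thorough than the paper about the bilipschitz bookkeeping---the paper simply asserts that the derivative bound on $B_R(x_\infty)$ gives a bilipschitz constant $\leq K$, leaving implicit the control of $S_t$-geodesics (your use of Lemma~\ref{distanceacross} to keep $B_{S_t}(x_t,D)\subset X_t$) and the radius arithmetic that you spell out.
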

\begin{proof}
Let $R>0$ and $K>1$. For $t$ large enough, the map $g_t^{-1}$ is defined on the ball $B_R(x_\infty)$ and its restriction to that disk has bilipschitz constant less than or equal to $K$. This is because the norm of the derivative of $g_t^{-1}$ at a point $p_\infty(z)$ with respect to hyperbolic metrics is equal to
$$
\frac{|(\widetilde{g_t}^{-1})'(z)|}{1-|\widetilde{g_t}^{-1}(z)|^2} \cdot (1-|z|^2)
$$
and the latter converges to $1$ uniformly on compact sets. Thus the Gromov distance between $(S_t,x_t)$ and $(S_\infty,x_\infty)$ is at most $\log K$.
\end{proof}

\subsection*{Convergence of deck groups}

For every $t\in(\R_{\geq 0})^E \cup \{\infty\}$, let $G_t$ be the deck group for the covering map $p_t:\D \to S_t$ and let
$$
\Theta_t : \pi_1(S_t,x_t) \to G_t
$$
be the isomorphism where $\Theta_t([\beta])$ is the unique deck transformation $h$ such that $h(0)$ is the endpoint of the lift of the loop $\beta$ based at $0$. 

Recall from Lemma \ref{ess subsurface} that the inclusion $i : X_t \to S_t$ is injective on fundamental groups and that $g_t : X_t \to S_\infty$ is bijective on fundamental groups. We thus have a faithful representation
$$
\Phi_t : \pi_1(S_\infty,x_\infty) \to \Aut(\D)
$$
defined by $\Phi_t = \Theta_t \circ i_* \circ (g_t)_*^{-1}$.

It is well-known that Gromov convergence implies Chabauty convergence of deck groups. We include a proof for completeness.

\begin{prop}\label{Chabauty convergence}
The deck group $G_t$ converges geometrically to $G_\infty$ and the representation $\Phi_t$ converges algebraically and geometrically to $\Theta_\infty$ as $t\to \infty^E$.
\end{prop}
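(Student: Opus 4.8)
The plan is to deduce Chabauty convergence of the deck groups from the Gromov convergence established in the previous proposition, using the fact that $\Phi_t$ and $\Theta_\infty$ are built from the same marking on $\pi_1(S_\infty,x_\infty)$. First I would recall the definitions: $\Phi_t$ converges \emph{algebraically} to $\Theta_\infty$ if $\Phi_t(\gamma) \to \Theta_\infty(\gamma)$ in $\Aut(\D)$ for every fixed $\gamma \in \pi_1(S_\infty,x_\infty)$; it converges \emph{geometrically} if in addition every convergent sequence $\Phi_{t_n}(\gamma_n) \to A$ with $A \neq \id$ has limit $A \in G_\infty = \Theta_\infty(\pi_1(S_\infty,x_\infty))$, and every element of $G_\infty$ arises as such a limit. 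Since $\Phi_t$ is an isomorphism onto $G_t^{0} := \Theta_t(i_*\pi_1(X_t,x_t))$ (the deck group of the cover corresponding to the subsurface $X_t$), geometric convergence of $\Phi_t$ to $\Theta_\infty$ together with algebraic convergence will immediately give that $G_t^{0}$, hence also $G_t \supseteq G_t^{0}$, converges in the Chabauty topology to $G_\infty$ — here one uses that $X_t$ exhausts $S_t$ in the appropriate sense, so that the "extra" generators of $G_t$ (those corresponding to the pinched curves $\alpha_t$) escape every compact subset of $\Aut(\D)$ as $t \to \infty^E$, by Lemma \ref{distanceacross}.

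The heart of the argument is algebraic convergence. Fix $\gamma = [\beta] \in \pi_1(S_\infty,x_\infty)$ and represent it by a loop $\beta$ based at $x_\infty$. For $t$ large, $\beta$ lies in the compact set $g_t(X_t)$ (since these exhaust $S_\infty$ by the remark after the definition of $g_t$, or by Lemma \ref{exhaustion}), so $(g_t)_*^{-1}(\gamma)$ is represented by $g_t^{-1}\circ\beta$ in $X_t$, and $i_*(g_t)_*^{-1}(\gamma)$ by the same loop pushed into $S_t$. Lifting to $\D$: the lift of $g_t^{-1}\circ\beta$ starting at $0$ is exactly $\widetilde{g_t}^{-1}$ applied to the lift $\widetilde\beta$ of $\beta$ under $p_\infty$ starting at $0$. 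By Lemma \ref{liftsconverge}, $\widetilde{g_t}^{-1}\to\id$ locally uniformly, so the endpoint of this lifted path converges to the endpoint of $\widetilde\beta$, which is $\Theta_\infty(\gamma)(0)$. Hence $\Phi_t(\gamma)(0)\to\Theta_\infty(\gamma)(0)$. To upgrade convergence of the translates of $0$ to convergence of the Möbius transformations themselves, I would run the same argument with the loop $\beta$ and a second loop obtained by concatenating $\beta$ with a small loop based at $x_\infty$ representing some fixed noncommuting element, or more simply note that an element of $\Aut(\D)$ is determined by its value and derivative at $0$, and the derivative statement follows the same way using that $\widetilde{g_t}^{-1}$ converges in $C^1$ on compacta (Cauchy's formula, as already noted in the excerpt). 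This gives $\Phi_t(\gamma)\to\Theta_\infty(\gamma)$, i.e.\ algebraic convergence.

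For geometric convergence I would argue as follows. Upper semicontinuity: suppose $\Phi_{t_n}(\gamma_n)\to A$ with $t_n\to\infty^E$ and $A\neq\id$. Then $A(0)$ is the limit of endpoints of lifts of loops $\beta_n$ in $g_{t_n}(X_{t_n})\subseteq S_\infty$; since $A(0)\neq 0$ has bounded hyperbolic distance to $0$, the loops $\beta_n$ have bounded length, hence (by Lemma \ref{liftsconverge} again, reversing the comparison) correspond to loops in $S_{t_n}$ that lift to paths from $0$ to a point near $A(0)$ — and because $g_t$ is essentially an isometry on the relevant compact set and the markings agree, $A$ must lie in the closure of $G_\infty$, which is $G_\infty$ itself as $G_\infty$ is discrete. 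Lower semicontinuity: every $h\in G_\infty$ equals $\Theta_\infty(\gamma)$ for some $\gamma$, and then $\Phi_t(\gamma)\to h$ by algebraic convergence, exhibiting $h$ as a limit. Finally, since $G_t$ contains $G_t^{0}\cong\Phi_t(\pi_1(S_\infty,x_\infty))$ as a subgroup and the remaining deck transformations correspond to loops crossing a pinching curve — which by Lemma \ref{distanceacross} have hyperbolic length (hence translation length, hence $\Aut(\D)$-distance from $\id$) tending to $\infty$ — no sequence of such extra elements can subconverge to a nontrivial limit, so the Chabauty limit of $G_t$ is exactly $G_\infty$. The main obstacle I anticipate is the bookkeeping in the geometric (as opposed to merely algebraic) convergence statement — specifically, making the "loops of bounded length survive, loops crossing the thin part escape" dichotomy precise and uniform, which is exactly where Lemma \ref{distanceacross} and the uniform bilipschitz control from the previous proposition must be invoked carefully rather than just Lemma \ref{liftsconverge}.
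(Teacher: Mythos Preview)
Your algebraic convergence argument is sound: you correctly identify $\Phi_t(\gamma)(0)$ as the endpoint of $\widetilde{g_t}^{-1}(\widetilde\beta)$ and use Lemma~\ref{liftsconverge}. The paper sharpens this by noting that in fact $\Phi_t(\gamma)=\widetilde{g_t}^{-1}\circ\Theta_\infty(\gamma)\circ\widetilde{g_t}$ on all of $\widetilde X_t$ (both sides lift $p_t$ and agree at $0$), which gives convergence of the M\"obius transformations immediately without your separate ``upgrade'' via derivatives.

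Your geometric convergence argument, however, is both more complicated than the paper's and not fully justified. In the upper semicontinuity step for $G_t^0$ you write that ``$A(0)\neq 0$ has bounded hyperbolic distance to $0$, so the loops $\beta_n$ have bounded length \ldots\ $A$ must lie in the closure of $G_\infty$,'' but the logic is reversed: what is given is a bound on $d(0,\Phi_{t_n}(\gamma_n)(0))$, i.e.\ a short geodesic loop in $S_{t_n}$, and you must transfer this via $g_{t_n}$ to a short loop in $S_\infty$, deduce that $\Theta_\infty(\gamma_n)(0)$ stays in a compact set, use discreteness of $G_\infty$ to pass to a subsequence with $\gamma_n$ constant, and then invoke algebraic convergence. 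That argument works, but it is not what you wrote, and it also requires checking that the short geodesic loop in $S_{t_n}$ stays inside $X_{t_n}$ so that $g_{t_n}$ applies.

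The paper avoids all of this. Given any convergent sequence $h_{t^n}\in G_{t^n}$ with limit $h\in\Aut(\D)$, one has $h_{t^n}(0)\to h(0)\in\D$, so by Lemma~\ref{exhaustion} eventually $h_{t^n}(0)\in\widetilde X_{t^n}$; since $\widetilde X_{t^n}$ and $h_{t^n}(\widetilde X_{t^n})$ are components of the same preimage, they coincide. Then the relation $p_\infty\circ\widetilde{g_{t^n}}=g_{t^n}\circ p_{t^n}=g_{t^n}\circ p_{t^n}\circ h_{t^n}=p_\infty\circ\widetilde{g_{t^n}}\circ h_{t^n}$ passes to the limit to give $p_\infty=p_\infty\circ h$, so $h\in G_\infty$. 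This handles all of $G_t$ at once: your decomposition into $G_t^0$ and ``extra'' elements, and the separate collar--lemma argument that the latter escape, are unnecessary.
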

\begin{proof}
Suppose we have a sequence $(h_{t^n})_{n\in\N}$ with $h_{t^n} \in G_{t^n}$ and $t^n \to \infty^E$ which converges locally uniformly to a M\"obius transformation $h$. Observe that $\widetilde X_{t^n}$ and $h_{t^n}(\widetilde X_{t^n})$ either coincide or are disjoint, as they are connected components of the inverse image $p_{t^n}^{-1}(X_{t^n})$. Let $U$ be a neighborhood of $h(0)$ with compact closure in $\D$. By Lemma \ref{exhaustion}, $U$ is contained in $\widetilde X_{t^n}$ for all large enough $n$. Since $h_{t^n}(0)$ converges to $h(0)$, we eventually have $h_{t^n}(0) \in \widetilde X_{t^n}$ and hence $h_{t^n}(\widetilde X_{t^n})=\widetilde X_{t^n}$. Then on $\widetilde X_{t^n}$ we have $$p_\infty \circ \widetilde{g_{t^n}} = g_{t^n} \circ p_{t^n} =  g_{t^n} \circ p_{t^n} \circ h_{t^n}=p_\infty \circ \widetilde{g_{t^n}} \circ h_{t^n}.$$
Since $\widetilde{g_{t^n}}$ converges locally uniformly to the identity map, we get $p_\infty = p_\infty \circ h$ in the limit and therefore $h$ belongs to the deck group $G_\infty$.

Let $h \in G_\infty$. We have to show that some net $(h_t)_{t\in (\R_{\geq 0})^E}$ with $h_t \in G_t$ converges locally uniformly to $h$ as $t \to \infty^E$. Assume that each component of $t$ is large enough so that $h(0) \in \widetilde{g_t}(\widetilde{X_t})$. Then $h \circ \widetilde{g_t}(\widetilde{X_t})$ equals $\widetilde{g_t}(\widetilde{X_t})$, since both are connected components of $p_{\infty}^{-1}(g_t(X_{t}))$ and their intersection is non-empty. Therefore, the map $m_t:=\widetilde{g_t}^{-1}\circ h \circ \widetilde{g_t}$ is well-defined on $\widetilde{X_t}$. We have
$$
p_t \circ m_t = p_t \circ \widetilde{g_t}^{-1}\circ h \circ \widetilde{g_t} = {g_t}^{-1}\circ p_\infty \circ h \circ \widetilde{g_t} = {g_t}^{-1}\circ p_\infty \circ \widetilde{g_t} = p_t.
$$
Since $p_t: \D \to S_t$ is a regular covering map, for every $z\in \widetilde{X_t}$ there exists a unique deck transformation $h_t^z\in G_t$ such that $h_t^z(z)=m_t(z)$. The equation $$p_t \circ h_t^z  = p_t = p_t \circ m_t$$ together with the local injectivity of $p_t$ implies that $h_t^z(w)=m_t(w)$ for all $w$ in some neighborhood of $z$. Therefore, the map $z \mapsto h_t^z$ is locally constant. As $\widetilde{X_t}$ is connected, $h_t=h_t^z$ does not depend on $z\in \widetilde{X_t}$ and we have $h_t=m_t$ on all of $\widetilde{X_t}$. Since $h_t =\widetilde{g_t}^{-1}\circ h \circ \widetilde{g_t}$ on $\widetilde{X_t}$ and $\widetilde{g_t} \to \id$, we have $h_t \to h$ as $t \to \infty^E$.

We now prove that $\Phi_t \to \Theta_\infty$ algebraically. Let $[\beta]\in \pi_1(S_\infty,x_\infty)$, let
$h_t=\Phi_t([\beta])$, and let $h = \Theta_\infty([\beta])$. Then $h(0)$ is the endpoint of the lift $\widetilde{\beta}$ of $\beta$ to $\D$ based at $0$. Suppose that all components of $t$ are large enough so that $\beta \subset g_t(X_t)$. Then by definition $h_t$ equals $\Theta_t([g_t^{-1}(\beta)])$ and hence $h_t(0)$ is the endpoint of the lift of $g_t^{-1}(\beta)$ to $\D$ based at 0. This lift is equal to $\widetilde{g_t}^{-1}(\widetilde{\beta})$,  so $$h_t(0)=\widetilde{g_t}^{-1}(h(0))=\widetilde{g_t}^{-1}\circ h \circ \widetilde{g_t}(0).$$ 
As in the previous paragraph, it follows that $h_t=\widetilde{g_t}^{-1}\circ h \circ \widetilde{g_t}$ on $\widetilde{X_t}$ and thus $h_t\to h$ as $t \to \infty^E$.

Since $\Phi_t(\pi_1(S_\infty,x_\infty))$ is a subgroup of $G_t$, and the latter converges to $G_\infty$, the only possible limits of sequences $(h_{t^n})_{n\in \N}$ with $h_{t^n} \in \Phi_{t^n}(\pi_1(S_\infty,x_\infty))$ and $t^n \to \infty^E$ are contained in $G_\infty$. By the previous paragraph, every element in $G_\infty$ arises as a limit. Therefore $\Phi_t(\pi_1(S_\infty,x_\infty))$ converges to $ \Theta_\infty(\pi_1(S_\infty,x_\infty))$ geometrically.
\end{proof}

\subsection*{Convergence of Fenchel-Nielsen coordinates in the thick part}

From Cha\-bauty convergence of the deck groups, we can easily deduce that the Fenchel-Nielsen coordinates for $S_t$ about curves that do not get pinched converge to the corresponding coordinates on $S_\infty$.

\begin{prop} \label{convergenceFN}
For every $\alpha \in F\setminus E$, we have $\ell^\alpha(S_t)\to\ell^\alpha(S_\infty)$ and $\theta^\alpha(S_t) \to \theta^\alpha(S_\infty)$ as $t\to \infty^E$.
\end{prop}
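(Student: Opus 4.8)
The plan is to read both Fenchel--Nielsen coordinates off the marked Fuchsian group and then invoke the algebraic convergence $\Phi_t \to \Theta_\infty$ from Proposition \ref{Chabauty convergence}. Fix a loop representing $\alpha$ and set $a_t = \Phi_t([\alpha]) \in \Aut(\D)$ for $t \in (\R_{\geq 0})^E \cup \{\infty\}$. Since $\alpha \in F \setminus E$, the transformation $a_t$ is hyperbolic, its axis $A_t$ projects to the closed geodesic $\alpha^*$ in $S_t$, and its translation length equals $\ell^\alpha(S_t)$. By Proposition \ref{Chabauty convergence} the $a_t$ converge locally uniformly, equivalently as elements of $\mathrm{PSL}_2(\R)$, to $a_\infty$; since the translation length of a hyperbolic element is a continuous function of the transformation, we get $\ell^\alpha(S_t) \to \ell^\alpha(S_\infty)$. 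This settles the length coordinate.

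For the twist I would first set up a dictionary between the definition of $\theta^\alpha$ and the group. The geodesic $\alpha^*$ bounds two pairs of pants $P^+$ and $P^-$ in $S_t$, and likewise in $S_\infty$ (these pants may be identified if $\alpha$ is non-separating, which causes no trouble). Using the fixed markings, choose peripheral loops $b^+$ and $b^-$ representing, for each sign, one of the two boundary curves of $P^\pm$ other than $\alpha$, conjugated so that $\axis(\Phi_t(b^\pm))$ is the lift of that boundary geodesic adjacent to $A_t$ across a lift of $P^\pm$. Then the seam of $P^\pm$ joining $\alpha^*$ to that boundary curve lifts to the common perpendicular of $A_t$ and $\axis(\Phi_t(b^\pm))$; let $f_t^\pm$ be its foot on $A_t$. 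Unwinding the definitions, $\theta^\alpha(S_t)$ is the signed distance from $f_t^+$ to $f_t^-$ along $A_t$ divided by $\ell^\alpha(S_t)$, read in $\R/\frac{1}{2}\Z$ --- the $\frac{1}{2}$ being precisely the shift by $\ell^\alpha(S_t)/2$ produced by replacing a chosen seam by the other (diametrically opposite) seam of $P^\pm$ meeting $\alpha^*$. Each of $A_t$, $f_t^\pm$ and $\ell^\alpha(S_t)$ depends continuously on the finitely many transformations $a_t, \Phi_t(b^+), \Phi_t(b^-)$, all of which converge algebraically to their analogues for $\Theta_\infty$; hence $\theta^\alpha(S_t) \to \theta^\alpha(S_\infty)$ in $\R/\frac{1}{2}\Z$.

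The step needing the most care --- which I would isolate as a small lemma --- is the continuity of the foot $f_t^\pm$ when the other boundary curve of $P^\pm$ lies in $E$, i.e.\ is getting pinched. In that case $\Phi_t(b^\pm)$ is hyperbolic with translation length tending to $0$ and converges to the \emph{parabolic} element $\Theta_\infty(b^\pm)$, whose fixed point $p$ is the common limit of the two endpoints of $\axis(\Phi_t(b^\pm))$. One must check that the foot on $A_t$ of the common perpendicular still converges, with limit the foot on $A_\infty$ of the geodesic through $A_\infty$ perpendicular to $A_\infty$ and landing at $p$ --- which is exactly the foot of the seam running into the cusp of $P^\pm$ in $S_\infty$. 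This is a short computation in the upper half-plane: after normalizing $A_\infty$ to the imaginary axis, the common perpendicular of the imaginary axis and the geodesic with endpoints $p^-, p^+$ of the same sign meets the imaginary axis at $i\sqrt{p^- p^+}$, which extends continuously to the diagonal $p^- = p^+ = p$. Beyond this degeneration and the bookkeeping that matches the geometric definition of $\theta^\alpha$ with the common-perpendicular description (signs, the orientation of $A_t$, the mod-$\frac{1}{2}$ ambiguity), every step is a routine continuity argument layered on top of Proposition \ref{Chabauty convergence}.
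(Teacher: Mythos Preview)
Your proposal is correct and follows essentially the same line as the paper's proof: both read the length and twist coordinates off the deck group via the algebraic convergence of Proposition~\ref{Chabauty convergence}, identifying $\ell^\alpha$ with the translation length of the element corresponding to $\alpha$ and $\theta^\alpha$ with the normalized signed distance between the feet of the common perpendiculars from $\axis(a_t)$ to the axes (or parabolic fixed points) of the neighboring peripheral elements. The paper handles your isolated lemma in one sentence, asserting that ``the function which to two possibly degenerate disjoint geodesics in $\overline{\D}$ assigns their orthogeodesic is continuous''; your explicit $i\sqrt{p^-p^+}$ computation is a concrete verification of that claim in the hyperbolic-to-parabolic degeneration.
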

\begin{proof}
Fix $\alpha \in F\setminus E$, and choose an arc $\gamma$ from the basepoint $x$ to $\beta$ in $S\setminus E$. For any $t \in (\R_{\geq }0)^E \cup \{\infty\}$, let $\sigma_t$ denote the inclusion of the loop $\sigma = \gamma * \alpha * \overline \gamma$ in $S_t$. By Proposition \ref{Chabauty convergence}, the deck transformation $h_t = \Theta_t([\sigma_t])$ converges to $h_\infty= \Theta_\infty([\sigma_\infty])$ as $t \to \infty^E$. In particular, the translation length $\ell^\alpha(S_t)$ of $h_t$ converges to the translation length $\ell^\alpha(S_\infty)$ of $h$ as $t \to \infty^E$.

For every $\beta \in F \setminus E$ and every $t \in (\R_{\geq 0})^E \cup \{ \infty\}$, let $\beta_t^*$ be the closed geodesic homotopic to $\beta$ in $S_t$. Recall that $\theta^\alpha(S_t)$ is defined as the normalized distance between the feet of two seams $\eta_t^+$ and $\eta_t^-$ modulo one half. Each seam runs from $\alpha_t^*$ to either another simple closed geodesic in $S_t$ or a cusp. Let $\sigma^\pm$ be a loop based at $x$ in $S \setminus E$ such that its inclusion $\sigma_t^\pm$ in $S_t$ is homotopic to either the geodesic or the puncture at the other end of $\eta_t^\pm$, and let $h_t^\pm = \Theta_t([\sigma_t^\pm])$.

If $h \in \Aut(\D)$ is hyperbolic, we let $\axis(h)$ be its translation axis together with endpoints. If $h$ is parabolic, we let $\axis(h)$ be the fixed point of $h$ on $\partial \D$. By Proposition \ref{Chabauty convergence}, we have $h_t \to h_\infty$, $h_t^+ \to h_\infty^+$ and $h_t^- \to h_\infty^-$ as $t \to \infty^E$. It follows that the corresponding axes also converge. Let $\nu_t^\pm$ be the orthogeodesic between $\axis(h_t)$ and $\axis(h_t^\pm)$ in $\overline{\D}$. The arc $\nu_t^\pm \cap \D$ is a lift of $\eta_t^\pm$. Since the function which to two possibly degenerate disjoint geodesics in $\overline \D$ assigns their orthogeodesic is continuous, we have $\nu_t^\pm \to \nu_\infty^\pm$. In particular, the distance $\Delta_t$ between the feet of $\nu_t^+$ and $\nu_t^-$ on $\axis(h_t)$ converges to the distance $\Delta_\infty$ between the feet of  $\nu_\infty^+$ and $\nu_\infty^-$ on $\axis(h_\infty)$. Modulo one half, we have $\theta^\alpha(S_t)= \Delta_t / \ell^\alpha(S_t)$ and $\theta^\alpha(S_\infty)= \Delta_\infty / \ell^\alpha(S_\infty)$. Since lengths converge, we have $\theta^\alpha(S_t) \to \theta^\alpha(S_\infty)$.

\end{proof}
 
The Fenchel-Nielsen twist coordinate $\widetilde \theta^\alpha(S_t)$ also converges as $t\to \infty^E$ since the quotient map $\R \to \R / \frac{1}{2}\Z$ is a covering map.

Recall that our ultimate goal is to prove that the half-twists about curves in $E$ converge. The natural coordinate system in which to measure the half-twist $\theta^\alpha(S_t)$ is the annulus cover corresponding to $\alpha_t^*$. In that annulus, the geodesic $\alpha_t^*$ is longitudinal and the orthogeodesics $\eta_t^+$ and $\eta_t^-$ are latitudinal. However, our only way to compare what happens near $\alpha_t^*$ in $S_t$ with what happens in $S_\infty$ is via grafted cylinder coordinates. Therefore, we need to understand how distances get distorted when we change coordinates from the grafted cylinder $C_t^\alpha$ to the corresponding annulus cover.  
 
\section{A distortion theorem for cylinders} \label{longcylinders}

Every conformal embedding from the infinite cylinder $\Co / \Z$ to itself is an isometry for the Euclidean metric. In this section, we prove that every essential conformal embedding from a sufficiently long cylinder into $\Co / \Z$ is as close as we wish to an isometry away from the boundary. For $b>0$, we write $B(b)=\left\{ z\in \Co : |\im z|<b \right\}$ for the strip of height $2b$ centered on the real line.

\begin{thm} \label{long cylinders cant twist}
Let $\varepsilon>0$. There exists $c>0$ such that for every $b>c$, and every conformal embedding $\varphi : B(b) / \Z \to \Co/\Z $ with $\varphi(0)=0$ and $\varphi_*(1)=1$ on fundamental groups, the restriction of $\varphi$ to $B(b-c) / \Z$ is within $\varepsilon$ of the identity.
\end{thm}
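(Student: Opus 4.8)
The plan is to lift $\varphi$ to the universal covers, where it becomes an injective holomorphic self-map $\tilde\varphi$ of the strip $B(b)$ commuting with the translation $z\mapsto z+1$; then to prove a bound on $\tilde\varphi'$ that is \emph{uniform in $b$}; and finally to run an elementary Fourier estimate on $g=\tilde\varphi-\id$.

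First I would set up the lift. The universal covers of $B(b)/\Z$ and $\Co/\Z$ are $B(b)$ and $\Co$, with deck groups generated by $z\mapsto z+1$. Since $\varphi_*(1)=1$, the map $\varphi$ lifts to a holomorphic $\tilde\varphi:B(b)\to\Co$ with $\tilde\varphi(z+1)=\tilde\varphi(z)+1$, and after composing with a deck transformation of $\Co$ we may assume $\tilde\varphi(0)=0$. This $\tilde\varphi$ is injective: if $\tilde\varphi(z_1)=\tilde\varphi(z_2)$ then $z_2=z_1+n$ for some $n\in\Z$ because $\varphi$ is injective, and then $\tilde\varphi(z_1)=\tilde\varphi(z_1)+n$ forces $n=0$. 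Setting $g=\tilde\varphi-\id$, the function $g$ is holomorphic and $\Z$-periodic on $B(b)$, and the theorem reduces to producing, for each $\varepsilon>0$, a constant $c$ depending only on $\varepsilon$ such that $|g|<\varepsilon$ on $B(b-c)$ whenever $b>c$.

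The heart of the argument is the claim that $|\tilde\varphi'|\le 4$ on $B(b-1)$, with the constant independent of $b$. Fix $z_0\in B(b-1)$ and set $\Delta=\{\,|z-z_0|<1\,\}$, so $\Delta$ and its horizontal translate $\Delta+2$ are disjoint subsets of $B(b)$. By the Koebe one-quarter theorem applied to $\tilde\varphi|_\Delta$, the image $\tilde\varphi(\Delta)$ contains the open disk of radius $\tfrac14|\tilde\varphi'(z_0)|$ about $\tilde\varphi(z_0)$; by equivariance $\tilde\varphi(\Delta+2)=\tilde\varphi(\Delta)+2$; and by injectivity $\tilde\varphi(\Delta)$ and $\tilde\varphi(\Delta)+2$ are disjoint. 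Two disjoint open disks of the same radius with centers at distance $2$ have radius at most $1$, so $|\tilde\varphi'(z_0)|\le 4$. Hence $|g'|=|\tilde\varphi'-1|\le 5$ on $B(b-1)$. The essential point is that the Koebe disk used here has a \emph{fixed} size (radius $1$), so the estimate does not degenerate as $b\to\infty$; this uniformity is precisely what lets the final $c$ depend on $\varepsilon$ alone. (A matching lower bound $|\tilde\varphi'|\ge m_0$ follows similarly via the Koebe distortion theorem, but is not needed.)

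To finish, observe that the $\Z$-periodic holomorphic function $g$ has a Fourier expansion $g(z)=\sum_{n\in\Z}c_n e^{2\pi i nz}$ converging locally uniformly on $B(b)$, so $g'(z)=\sum_{n\ne 0}2\pi i n\,c_n e^{2\pi i nz}$ has vanishing mean. Integrating $g'(x+iy)e^{-2\pi i nx}$ over $x\in[0,1]$ at heights $y$ approaching $-(b-1)$ for $n>0$ and $b-1$ for $n<0$, and using $|g'|\le 5$ there, gives $2\pi|n|\,|c_n|\le 5\,e^{-2\pi|n|(b-1)}$ for every $n\ne 0$; and $g(0)=0$ yields $|c_0|\le\sum_{n\ne0}|c_n|$. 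Thus for $z\in B(b-c)$ with $c>1$, using $|e^{2\pi i nz}|\le e^{2\pi|n|(b-c)}$ and $b>c$,
\[
|g(z)|\;\le\;|c_0|+\sum_{n\ne 0}|c_n|\,e^{2\pi|n|(b-c)}\;\le\;\frac{10}{\pi}\sum_{k\ge 1}\frac{e^{-2\pi k(c-1)}}{k},
\]
a quantity tending to $0$ as $c\to\infty$, independently of $b$. Choosing $c$ large enough makes this $<\varepsilon$; translated back down to $\Co/\Z$, $\varphi$ moves every point of $B(b-c)/\Z$ by less than $\varepsilon$, and the same estimate applied to $g'$ gives $C^1$-closeness if desired. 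There is no deep obstacle here: the one thing that must be handled carefully is exactly the uniformity in $b$ of the derivative bound, which is what keeps the cutoff $c$ from depending on $b$. I do not expect to need the non-squeezing Lemma~\ref{straightsubcylinder}, though it gives an alternative route by trapping the image of $\varphi$.
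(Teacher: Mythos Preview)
Your argument is correct, and it is a genuinely different route from the paper's.

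Both proofs lift to the $\Z$-equivariant map $\tilde\varphi:B(b)\to\Co$ and then exploit periodicity to convert a uniform bound on the derivative into exponential decay of $\tilde\varphi-\id$ away from the boundary. The two differences are in how each step is executed. For the uniform derivative bound, the paper goes through the non-squeezing Lemma~\ref{straightsubcylinder} to show that unit squares map to sets of area at most $2$, and then invokes subharmonicity of $|f'|^2$; your use of Koebe's quarter theorem on a unit disk together with the disjointness of $\tilde\varphi(\Delta)$ and $\tilde\varphi(\Delta)+2$ is shorter and bypasses Lemma~\ref{straightsubcylinder} entirely (as you anticipated). For the decay, the paper applies Cauchy's residue theorem with the periodized kernel $1/\sin^2(\pi(w-z))$ to bound $f''$ and $f'-1$ separately, then integrates; you go straight to the Fourier expansion of $g=\tilde\varphi-\id$ and read off the coefficient bounds. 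These are two presentations of the same mechanism---the $1/\sin^2$ kernel is essentially the generating function for the Fourier projection---but your version is more transparent. What the paper's route buys is explicit constants and a self-contained argument that does not quote Koebe; what yours buys is brevity and a clearer view of why the estimate is exponential in the distance to the boundary.
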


The theorem and its proof are similar to Koebe's distortion theorem \cite[\S 2.3]{Duren} \cite[\S 5.1]{Ahlfors}. In fact, one can deduce a weak version of Koebe's theorem from the above.

The first step is to make use of the non-squeezing lemma for cylinders, which plays the role of Koebe's one quarter theorem. For $b>0$, let $\mathcal{F}_b$ be the set of all conformal embeddings $\varphi: B(b)/\Z \to \Co / \Z$ with $\varphi(0)=0$ and $\varphi_*(1)=1$ on fundamental groups. 

\begin{lem} \label{height M}
Let $b>2$ and $\varphi \in \mathcal{F}_b$. For every straight cylinder $T$ of modulus $1$ contained in $B(b-2)/\Z$, the image $\varphi(T)$ is contained in a straight cylinder of modulus at most $2$.
\end{lem}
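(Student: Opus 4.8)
The plan is to bound the image $\varphi(T)$ using the non-squeezing lemma (Lemma \ref{straightsubcylinder}) together with monotonicity of modulus. First I would recall that a straight cylinder $T$ of modulus $1$ sitting inside $B(b-2)/\Z$ leaves room on both sides: since $B(b-2)$ has total height $2(b-2) > 2$, and $T$ has height $1$, the complement $B(b-2)/\Z \setminus T$ contains two disjoint straight subcylinders, one above $T$ and one below, whose moduli sum to $2(b-2)-1 = 2b-5$. Label these $T^+$ (above) and $T^-$ (below), and note each is essential in $B(b)/\Z$. Applying $\varphi$, which is a conformal embedding, we obtain three disjoint essential subcylinders $\varphi(T^-)$, $\varphi(T)$, $\varphi(T^+)$ of $\Co/\Z$, appearing in that cyclic order, with $\Mod\varphi(T)=1$ and $\Mod\varphi(T^\pm)=\Mod T^\pm$.

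The key point is to locate $\varphi(T)$ vertically. Because $\varphi$ fixes the origin and is normalized on fundamental groups, $\varphi(T^-)$ and $\varphi(T^+)$ each contain an essential loop, and these loops separate $\Co/\Z$; let $y^-$ be the infimum of the imaginary parts attained on $\varphi(T^+)$ and $y_+$ the supremum attained on $\varphi(T^-)$ — more precisely I would use the non-squeezing lemma (Lemma \ref{straightsubcylinder}) on the cylinders $\varphi(T^\pm)$: each contains a closed straight cylinder of modulus $\Mod T^\pm - 1$, hence of Euclidean height $\Mod T^\pm -1$. The essential loop of $\varphi(T)$ lies strictly between these two straight cylinders in $\Co/\Z$. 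Therefore $\varphi(T)$ is contained in the open straight cylinder $S^1 \times (a,a')$ bounded by those two straight subcylinders of $\varphi(T^\pm)$. Its modulus $a'-a$ satisfies $a'-a \le (2b-5) - (\Mod T^+ - 1) - (\Mod T^- - 1) + (\text{slack})$; since the straight subcylinders of $\varphi(T^\pm)$ together with $\varphi(T)$ are disjoint and all sit inside $\Co/\Z$ of infinite modulus, this does not immediately bound $a'-a$. Instead I would argue directly: the straight cylinder $S^1\times(a,a')$ containing $\varphi(T)$ must be disjoint from the interiors of the two straight subcylinders of $\varphi(T^\pm)$, and these three together with the requirement that $\varphi(T^+)$ lies entirely above and $\varphi(T^-)$ entirely below forces — once $b>2$ so that $\Mod T^\pm \geq 1$ — that $\varphi(T)$ is squeezed into a straight cylinder of modulus at most $2$. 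The quantitative constant $2$ should come from the following cleaner estimate: replace $T^\pm$ by unit-modulus subcylinders $T_1^\pm$ adjacent to $T$ (possible since the complement of $T$ in $B(b-2)/\Z$ has height $> 1$ once $b>2$); then $\varphi(T_1^\pm)$ each contains no straight cylinder of modulus $1$ would be false — rather, $\varphi(T_1^+\cup T \cup T_1^-)$ is an essential cylinder of modulus $3$ in $\Co/\Z$, so by Lemma \ref{straightsubcylinder} it contains a closed straight cylinder of modulus $2$, and $\varphi(T)$ must avoid the portions corresponding to $T_1^\pm$; a short separation argument then places $\varphi(T)$ inside a straight cylinder of modulus at most $2$.

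The main obstacle I anticipate is the bookkeeping in the last step: translating "contains a straight subcylinder of modulus $2$" for the larger embedded cylinder into a genuine vertical confinement of the middle piece $\varphi(T)$, being careful that the straight subcylinder provided by Lemma \ref{straightsubcylinder} could a priori be positioned anywhere inside $\varphi(T_1^+\cup T\cup T_1^-)$ and need not respect the tripartition. The resolution is that the \emph{core loops} of $\varphi(T_1^+)$ and $\varphi(T_1^-)$ bound a straight cylinder (the one from the non-squeezing lemma must lie between any two disjoint essential loops it separates them from — more simply, any straight cylinder of modulus $2$ inside a modulus-$3$ cylinder whose boundary loops are $\varphi(\partial T_1^+)$ and $\varphi(\partial T_1^-)$ is confined to lie between appropriate translates), and $\varphi(T)$ lies inside that confining region. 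Once the vertical extent of $\varphi(T)$ is bounded by the height $2$ of such a straight cylinder, the lemma follows. I would keep the hypothesis $b>2$ exactly to guarantee the adjacent unit-modulus cylinders $T_1^\pm$ exist.
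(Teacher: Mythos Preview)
Your proposal has a genuine gap that you correctly identify but do not resolve. The second approach---taking unit-modulus neighbors $T_1^\pm$ and applying Lemma~\ref{straightsubcylinder} to $\varphi(T_1^-\cup T\cup T_1^+)$---produces a straight subcylinder $W$ of modulus $2$ inside that image, but this says nothing about the vertical extent of $\varphi(T)$: the non-squeezing lemma gives a straight cylinder \emph{contained in} the image, not one \emph{containing} a prescribed piece of it. Your proposed resolution fails because the core loops of $\varphi(T_1^\pm)$ are images of horizontal circles under $\varphi$ and have no reason to be horizontal themselves; they do not bound any straight cylinder, and the parenthetical about $W$ being ``confined between appropriate translates'' does not make sense for the same reason. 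Notice also that your argument uses only the restriction of $\varphi$ to the modulus-$3$ cylinder $T_1^-\cup T\cup T_1^+$, throwing away the rest of the domain $B(b)/\Z$; this should already make you suspicious that the argument cannot close with the constant $2$.

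The paper's proof uses the full domain in an essential way. It first applies Lemma~\ref{straightsubcylinder} to $\varphi(B(b)/\Z)$ to obtain a straight cylinder $C$ of modulus $2b-1$, and then again to $\varphi^{-1}(C)$ to conclude $B(b-2)/\Z\subset\varphi^{-1}(C)$, hence $\varphi(T)\subset C\subset\varphi(B(b)/\Z)$. The crucial consequence is that the \emph{minimal} straight cylinder $U$ containing $\varphi(T)$ lies inside the image of $\varphi$, so that $\varphi^{-1}$ is defined on all of $U$. The bound $\Mod U\le 2$ then comes from a minimality contradiction: if $\Mod U>2$, apply Lemma~\ref{straightsubcylinder} once more to $\varphi^{-1}(U)$ to obtain its maximal straight subcylinder $V$, of modulus exceeding $1$; since $T\subset\varphi^{-1}(U)$ is itself straight, $V\supset T$, and since $\Mod V>\Mod T$ the cylinder $V$ overhangs one boundary of $T$; then $\varphi(V)\subset U$ is an open neighborhood of the corresponding boundary circle of $\varphi(T)$, contradicting minimality of $U$. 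The idea you are missing is precisely this inversion step: one must first trap $U$ inside the image of $\varphi$ (which requires the two-pass use of Lemma~\ref{straightsubcylinder} on the full domain), and only then pull back and exploit minimality.
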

\begin{proof}
The image $\varphi(B(b)/\Z)$ is an essential subcylinder of $\Co / \Z$ of modulus $2b$. By Lemma \ref{straightsubcylinder}, $\varphi(B(b)/\Z)$ contains a straight cylinder $C$ of modulus $2b-1$, and the inverse image $\varphi^{-1}(C)$ contains a straight cylinder of modulus $2b-2$. In particular, $\varphi^{-1}(C)$ contains $B(b-2)/\Z$. Therefore $\varphi(B(b-2)/\Z)$ is contained in the straight cylinder $C$ of finite modulus which is itself contained in $\varphi(B(b)/\Z)$. 

Let $T$ be a straight cylinder of modulus $1$ contained in $B(b-2)/\Z$, and let $U$ be the smallest straight cylinder containing $\varphi(T)$. By the above discussion, the modulus $M$ of $U$ is finite and $\varphi^{-1}$ is defined on $U$. Suppose $M>2$ and let $\varepsilon:=(M-2)/2$. Then $\varphi^{-1}(U)$ is an essential cylinder in $\Co/\Z$ of modulus $M>2+\varepsilon$. The largest straight cylinder $V$ that $\varphi^{-1}(U)$ contains has modulus at least $1+\varepsilon$ by the Lemma \ref{straightsubcylinder}. As $T$ is a straight cylinder in $\varphi^{-1}(U)$, $V$ contains $T$. Since $\Mod V > \Mod T$, the cylinder $V$ forms an open neighborhood of one of the two boundary components of $T$. Now, $U$ contains $\varphi(V)$ and is thus an open neighborhood of one boundary component of $\varphi(T)$. This shows that $U$ is not minimal, a contradiction. Therefore $U$ has modulus at most $2$.  
\end{proof}

Each $\varphi \in \mathcal{F}_b$ lifts to a unique conformal embedding $f : B(b) \to \Co$ with $f(0)=0$ and $f(z+1)-f(z)=1$. Conversely, any $\Z$-equivariant univalent map $f : B(b) \to \Co$ fixing the origin descends to an element in $\mathcal{F}_b$ under the projection map $p:\Co \to \Co/\Z$. Let $\widetilde{\mathcal{F}_b}$ denote the set of such maps $f$. It will be more convenient to do calculations with $\widetilde{\mathcal{F}_b}$.

\begin{lem}
Let $b>2$, $f \in \widetilde{\mathcal{F}_b}$, and let $Q$ be an open unit square in $B(b-2)$ whose edges are parallel to the coordinate axes. Then the area of $f(Q)$ is at most $2$.
\end{lem}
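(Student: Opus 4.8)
\emph{Proof proposal.} The plan is to transfer the entire picture down to the quotient cylinder $\Co/\Z$, where the square $Q$ becomes (almost) a straight cylinder of modulus one, and then to invoke Lemma~\ref{height M} together with the elementary fact that a straight cylinder of modulus $m$ in $\Co/\Z$ has area $m$.

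Write $Q=(x_0,x_0+1)\times(y_0,y_0+1)$ and let $p:\Co\to\Co/\Z$ be the projection. First I would check that $p$ is injective on $Q$: two points of $Q$ cannot differ by a nonzero integer, since $Q$ has width exactly one in the real direction. The same holds for $f(Q)$: because $f$ is univalent and $f(z+1)=f(z)+1$, the translates $f(Q)+n=f(Q+n)$, $n\in\Z$, are pairwise disjoint, so no two points of $f(Q)$ differ by a nonzero integer either. As $p$ is a local isometry for the Euclidean metrics, this will give $\area f(Q)=\area p\bigl(f(Q)\bigr)$, the area on the right being computed in $\Co/\Z$.

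Next, let $\varphi\in\mathcal{F}_b$ be the element to which $f$ descends, so that $\varphi\circ p=p\circ f$, and let $T=\{\,[x+iy]:y_0<y<y_0+1\,\}$ be the straight cylinder of modulus one containing $p(Q)$ (which is $T$ with one vertical slit removed). Since $Q\subset B(b-2)$ we have $T\subset B(b-2)/\Z$, so Lemma~\ref{height M} provides a straight cylinder $U$ of modulus at most $2$ with $\varphi(T)\subset U$. Then
$$
p\bigl(f(Q)\bigr)=\varphi\bigl(p(Q)\bigr)\subset\varphi(T)\subset U,
$$
and therefore
$$
\area f(Q)=\area p\bigl(f(Q)\bigr)\le \area U=\Mod U\le 2.
$$

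I do not expect a genuine obstacle here; the argument is short, and the only point deserving care is the injectivity of $p$ on $f(Q)$, which is precisely where the normalization $f(z+1)=f(z)+1$ and the univalence of $f$ are used. (This lemma plays the role, in the cylinder distortion theorem, of the area bound underlying the classical Koebe estimates.)
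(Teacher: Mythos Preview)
Your proof is correct and follows essentially the same route as the paper: descend via $p$ to $\Co/\Z$, enclose $p(Q)$ in a straight cylinder $T$ of modulus $1$ inside $B(b-2)/\Z$, apply Lemma~\ref{height M} to trap $\varphi(T)$ in a straight cylinder of modulus (hence area) at most $2$, and then pull the area bound back to $f(Q)$ using that $p$ is a local isometry injective on $f(Q)$. The only difference is in how injectivity of $p$ on $f(Q)$ is justified: the paper argues that $\varphi(p(Q))$ is simply connected and deduces injectivity from that, whereas you use the equivariance $f(Q)+n=f(Q+n)$ together with univalence of $f$ to see directly that the integer translates of $f(Q)$ are pairwise disjoint; your argument is arguably the more transparent of the two.
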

\begin{proof}
Let $\varphi : B(b)/\Z \to \Co/\Z$ be the unique map such that $p \circ f = \varphi \circ p$. The image $p(Q)$ is included in a straight cylinder $T$ of modulus 1 contained in $B(b-2)/\Z$. By the previous lemma, $\varphi(T)$ is contained in a straight cylinder of modulus, and hence area, at most $2$. Therefore $\varphi(p(Q))$ has area at most $2$. We have $\varphi(p(Q))=p(f(Q)).$ Since $p(Q)$ is simply connected and $\varphi$ is an embedding, $\varphi(p(Q))$ is also simply connected. It follows that $p: f(Q) \to \varphi(p(Q))$ is injective. Since $p$ is a local isometry, the area of $f(Q)$ is bounded by $2$.
\end{proof}

For the rest of this section, suppose that $b>3$ and $f\in\widetilde{\mathcal{F}_b}$. We let $C_1 = \sqrt{8/\pi}$, $C_2 = 8\pi e^{5\pi}(C_1+1)$ and write $z=x+iy$ and $w=u+iv$ throughout.

\begin{lem} \label{derivative bounded}
The inequality $|f'(z)|\leq C_1$ holds for every $z\in B(b-5/2)$.
\end{lem}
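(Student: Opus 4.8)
The plan is to bound $|f'(z)|$ pointwise by feeding the area estimate of the previous lemma into the sub-mean-value property of $|f'|^2$; this is the cylindrical analogue of how one passes from an area bound to a derivative bound for univalent functions in the plane.

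First I would fix $z = x + iy \in B(b-5/2)$ and produce an appropriate unit square around it. Let $Q = (x-\tfrac12,\,x+\tfrac12)\times(y-\tfrac12,\,y+\tfrac12)$, the open unit square centered at $z$ with edges parallel to the coordinate axes. Since $|y| < b - \tfrac52$, every point of $Q$ has imaginary part of absolute value less than $|y| + \tfrac12 < b-2$, so $Q \subset B(b-2)$; the previous lemma therefore gives $\area(f(Q)) \le 2$. Note also that the open disk $D(z,\tfrac12)$ is contained in $Q$, and that the closed disk $\overline{D(z,\tfrac12)}$ lies in $B(b-2) \subset B(b)$, which is all we will need for the mean-value step. (This geometric bookkeeping — a radius-$\tfrac12$ disk about a point of height $<b-\tfrac52$ sits inside a unit square inside $B(b-2)$ — is the only place the exact constant $\tfrac52$ enters, and is the only thing requiring any care; the rest is routine.)

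Next I would apply subharmonicity: $f'$ is holomorphic on $B(b)$, so $|f'|^2$ is subharmonic there, and the sub-mean-value inequality over $\overline{D(z,\tfrac12)}$ reads
$$|f'(z)|^2 \;\le\; \frac{1}{\pi (1/2)^2}\int_{D(z,1/2)} |f'(w)|^2 \,dA(w).$$
Since $f$ is univalent, the area formula identifies $\int_{D(z,1/2)}|f'|^2\,dA$ with the Euclidean area of $f(D(z,\tfrac12))$, and because $D(z,\tfrac12)\subset Q$ this is at most $\area(f(Q)) \le 2$. Substituting,
$$|f'(z)|^2 \;\le\; \frac{2}{\pi/4} \;=\; \frac{8}{\pi} \;=\; C_1^2,$$
so $|f'(z)|\le C_1$, as claimed. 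I do not anticipate any genuine obstacle: the content of the lemma is entirely absorbed into the area bound already established, and the present step is the standard "area $\Rightarrow$ derivative" estimate.
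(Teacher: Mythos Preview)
Your proof is correct and follows essentially the same approach as the paper: apply the sub-mean-value inequality for $|f'|^2$ on the disk of radius $1/2$ centered at $z$, enlarge to the unit square $Q\subset B(b-2)$, and invoke the previous lemma's area bound $\area(f(Q))\le 2$ to conclude $|f'(z)|^2\le 8/\pi$. Your write-up is in fact a bit more explicit than the paper's about why $Q\subset B(b-2)$, but the argument is otherwise identical.
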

\begin{proof}
Let $z\in B(b-5/2)$. Since $|f'|^2$ is subharmonic, we have
$$
|f'(z)|^2 \leq \frac{4}{\pi} \int_D |f'(u+iv)|^2 \mathrm{d}u\mathrm{d}v,
$$
where $D$ is the disk of radius one-half about $z$. Alternatively, this follows from Cauchy's formula. If $Q$ is the open unit square centered at $z$, we get
$$
|f'(z)|^2  \leq   \frac{4}{\pi} \int_Q |f'(u+iv)|^2 \mathrm{d}u\mathrm{d}v   =  \frac{4}{\pi} \int_{f(Q)} 1\ \mathrm{d}u\mathrm{d}v \leq  \frac{8}{\pi}. \qedhere
$$
\end{proof}

By integration we easily obtain the following bound.
\begin{cor} \label{sublinear}
The inequality $|f(z)|\leq C_1|z|$ holds for every $z\in B(b-5/2)$.
\end{cor}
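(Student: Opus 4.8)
The plan is to integrate the derivative bound of Lemma \ref{derivative bounded} along a straight segment. Fix $z\in B(b-5/2)$. The key observation is that the horizontal strip $B(b-5/2)=\{w : |\im w|<b-5/2\}$ is convex, and it contains the origin (since $b>3$); hence the line segment $\{tz : t\in[0,1]\}$ joining $0$ to $z$ lies entirely inside $B(b-5/2)$. In particular, Lemma \ref{derivative bounded} applies at every point of this segment.

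Since $f$ is holomorphic on $B(b)\supset B(b-5/2)$ and $f(0)=0$, the fundamental theorem of calculus along this segment gives
$$
f(z)=\int_0^1 f'(tz)\,z\,\mathrm{d}t .
$$
Taking absolute values and using $|f'(tz)|\leq C_1$ for all $t\in[0,1]$ from Lemma \ref{derivative bounded}, we obtain
$$
|f(z)|\leq \int_0^1 |f'(tz)|\,|z|\,\mathrm{d}t \leq C_1|z|,
$$
which is the desired inequality.

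There is essentially no obstacle here: the only thing to check is that the segment of integration stays in the region where the derivative bound holds, and this is immediate from the convexity of the strip together with $0\in B(b-5/2)$. (If one preferred to avoid even mentioning convexity, one could instead integrate first horizontally from $0$ to $\re z$ and then vertically from $\re z$ to $z$, but the straight-segment version is cleanest.)
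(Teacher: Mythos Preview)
Your proof is correct and is exactly the argument the paper has in mind: it simply says ``By integration we easily obtain the following bound,'' and the convexity of the strip together with $f(0)=0$ makes the straight-segment integration work immediately.
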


The main trick appears in the next lemma. We apply Cauchy's residue theorem to a well-chosen function and get a good bound on $f''$.
\begin{lem} \label{second derivative}
The inequality
$$
|f''(z)|\leq C_2\, e^{2\pi(|y|-b)}
$$
holds for every $z\in B(b-3)$.
\end{lem}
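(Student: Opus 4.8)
The plan is to exploit the fact that $f'$ is $1$-periodic. Differentiating $f(z+1)=f(z)+1$ gives $f'(z+1)=f'(z)$, so $f'$ is a $1$-periodic holomorphic function on $B(b)$, and by Lemma~\ref{derivative bounded} it is bounded by $C_1$ on $B(b-5/2)$ (recall that $b>3$ throughout). To extract $f''$ at a point $z_0=x_0+iy_0\in B(b-3)$ I would apply Cauchy's residue theorem to the meromorphic function
\[
\Psi(w)=\frac{\pi^2 f'(w)}{\sin^2\pi(w-z_0)}
\]
on the rectangle $R$ with vertices $x_0\pm\tfrac12\pm i(b-\tfrac52)$. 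Near $w=z_0$ one has $\pi^2/\sin^2\pi(w-z_0)=(w-z_0)^{-2}+O(1)$, so $\operatorname{Res}_{w=z_0}\Psi=f''(z_0)$, and $z_0$ is the only pole of $\Psi$ inside $R$ since the poles of the kernel lie in $z_0+\mathbb{Z}$.

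First I would note that the integrals of $\Psi$ over the two vertical sides of $\partial R$ cancel: the sides are a unit distance apart, and both $f'$ and $w\mapsto\sin^2\pi(w-z_0)$ are $1$-periodic (the latter because $\sin(\theta+\pi)=-\sin\theta$), so $\Psi$ takes equal values on the two sides while they are traversed in opposite directions. Hence $2\pi i\,f''(z_0)=\int_{\mathrm{top}}\Psi+\int_{\mathrm{bottom}}\Psi$. On each horizontal side one has $|\im w|=b-\tfrac52$, hence $|f'(w)|\le C_1$, and the imaginary part of $w-z_0$ there has modulus at least $(b-\tfrac52)-|y_0|>\tfrac12$ because $z_0\in B(b-3)$. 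Using the identity $|\sin\pi(u+ih)|^2=\sin^2\pi u+\sinh^2\pi h\ge\sinh^2\pi h$ together with $\sinh\pi h\ge\tfrac12(1-e^{-\pi})e^{\pi h}$ for $h\ge\tfrac12$, one obtains $|\Psi(w)|\le\frac{4\pi^2 C_1}{(1-e^{-\pi})^2}e^{-2\pi|\im(w-z_0)|}$ on each horizontal side. Since each side has euclidean length $1$, summing the top and bottom contributions gives
\[
2\pi\,|f''(z_0)|\le\frac{4\pi^2 C_1}{(1-e^{-\pi})^2}\,e^{-2\pi(b-5/2)}\bigl(e^{2\pi y_0}+e^{-2\pi y_0}\bigr)\le\frac{8\pi^2 C_1}{(1-e^{-\pi})^2}\,e^{-2\pi(b-5/2)}e^{2\pi|y_0|},
\]
so $|f''(z_0)|\le\frac{4\pi C_1}{(1-e^{-\pi})^2}e^{5\pi}e^{2\pi(|y_0|-b)}$. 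Since $(1-e^{-\pi})^2>\tfrac12$, this is at most $8\pi C_1 e^{5\pi}e^{2\pi(|y_0|-b)}<C_2\,e^{2\pi(|y_0|-b)}$, which is the claimed bound.

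The hard part is not the computation but getting the setup right. The first point is that one should work with $f'$ rather than with $g(w)=f(w)-w$: although $g$ is also $1$-periodic, the only bound available for it, $|g(w)|\le(C_1+1)(1+|\im w|)$ from Corollary~\ref{sublinear}, grows along the cylinder, and carrying it through the argument would pollute the final estimate with a factor of order $b$; since $f'$ is genuinely bounded, the constant stays $b$-independent. The second point is the choice of kernel and of the rectangle: the $\sin^{-2}$ kernel is exactly what makes the vertical sides cancel; the height $2(b-\tfrac52)$ is the largest one for which Lemma~\ref{derivative bounded} still controls $f'$ on the horizontal sides; and the hypotheses $b>3$ and $z\in B(b-3)$ are precisely what force those sides to lie at distance more than $\tfrac12$ from the horizontal line through $z_0$, so that the kernel is simultaneously bounded and of size $\sim e^{-2\pi(b-5/2)}=e^{5\pi}e^{-2\pi b}$ there. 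Once these choices are made, the residue computation, the cancellation of the vertical sides, and the estimate of $1/|\sin|$ along a horizontal line are all routine.
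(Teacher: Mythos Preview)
Your proof is correct and follows essentially the same approach as the paper: apply Cauchy's residue theorem to $f'(w)/\sin^2(\pi(w-z_0))$ on the rectangle of width $1$ and height $2(b-5/2)$ centered at $x_0$, use periodicity to cancel the vertical sides, bound $|f'|$ by $C_1$ via Lemma~\ref{derivative bounded}, and estimate $|\sin|$ from below on the horizontal sides. The only cosmetic difference is in how the lower bound on $|\sin|$ is obtained---the paper writes $|\sin\zeta|^2\ge e^{2|\im\zeta|}/8$ for $|\im\zeta|\ge\log 2$, while you use the identity $|\sin(a+ib)|^2=\sin^2 a+\sinh^2 b$ and then $\sinh(\pi h)\ge\tfrac12(1-e^{-\pi})e^{\pi h}$---but both routes yield the bound $8\pi C_1 e^{5\pi}e^{2\pi(|y|-b)}<C_2\,e^{2\pi(|y|-b)}$.
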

\begin{proof}

As $f''$ is $\Z$-periodic, it suffices to prove the inequality for $|x| < 1/2$.  Fix $z=x+iy\in B(b-3)$ with $|x|< 1/2$ and let $$R=\left\{ w \in \Co : |\re w-x|< 1/2, |\im w|<b-5/2 \right\}.$$ 

The function 
$$w \mapsto \frac{f'(w)}{\sin^2(\pi(w-z))}$$
is holomorphic in $R\setminus \{ z \}$ and has residue $f''(z)/\pi^2$ at $z$.

By Cauchy's residue theorem, we have
$$
f''(z)=\frac{\pi}{2 i} \oint_{\partial R} \frac{f'(w)}{\sin^2(\pi(w-z))}\mathrm{d}w.
$$ 

Moreover, since the integrand is $\Z$-periodic, the two vertical sides of the integral cancel out and we are left with
$$
f''(z)=\frac{\pi}{2 i}   \int_{x - 1/2}^{x + 1/2} \left[ \frac{f'(u+ih)}{\sin^2(\pi(u+ih-z))} +  \frac{f'(u-ih)}{\sin^2(\pi(u-ih-z))} \right]\mathrm{d}u,
$$
where $h= b - 5/2$.

For $\zeta=s+it$ with $|t|\geq \log 2$, we have
$$
|\sin \zeta |= \left|\frac{e^{i\zeta}-e^{-i\zeta}}{2i} \right|\geq \frac{e^{|t|}-e^{-|t|}}{2},
$$
so that
$$
|\sin \zeta|^2 \geq \frac{e^{2|t|}+e^{-2|t|}-2}{4} \geq \frac{e^{2|t|}-2}{4} \geq \frac{e^{2|t|}}{8}.
$$
It follows that
$$
|\sin^2(\pi(u\pm ih-z))| = |\sin^2( \pi(u - x - i(y\mp h))| \geq \frac{e^{2\pi(h-|y|)}}{8}.
$$
Using this inequality and the bound $|f'|\leq C_1$ from Lemma \ref{derivative bounded} in the above integral yields

$$
|f''(z)|\leq \frac{8\pi C_1}{e^{2\pi(h-|y|)}} = 8 \pi e^{5\pi} C_1 e^{2\pi(|y|-b)} \leq C_2 e^{2\pi(|y|-b)}. \qedhere
$$
\end{proof}

The same trick can be applied to get a good bound on $f'$ on the real line, better than the one from Lemma \ref{derivative bounded}.

\begin{lem} \label{derivative on real line}
The inequality
$$
|f'(x)-1|\leq C_2\,(b+1)e^{-2\pi b}
$$
holds for all $x\in\R$.
\end{lem}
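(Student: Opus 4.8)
The plan is to follow the strategy of Lemma \ref{second derivative}: realize $f'(x)-1$ as the residue of a well-chosen $\Z$-periodic meromorphic function, and then bound the corresponding contour integral over two horizontal segments that lie far from the real line, where the kernel decays exponentially. Set $g(w)=f(w)-w$. Since $f(w+1)=f(w)+1$, the function $g$ is $\Z$-periodic and holomorphic on $B(b)$, and $g'(w)=f'(w)-1$ is precisely the quantity to be estimated on $\R$; as $g'$ is $\Z$-periodic it suffices to take $|x|\leq 1/2$. The function $w\mapsto g(w)/\sin^2(\pi(w-x))$ is then $\Z$-periodic, holomorphic on $B(b)\setminus(x+\Z)$, and has residue $g'(x)/\pi^2$ at $x$ (expand $\sin^{-2}$ near the pole exactly as in the proof of Lemma \ref{second derivative}).

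I would then apply Cauchy's residue theorem on the rectangle $R=\{\,w:|\re w-x|<1/2,\ |\im w|<b-5/2\,\}$, inside which $x$ is the only pole of the integrand, to obtain
$$
f'(x)-1 = g'(x) = \frac{\pi}{2i}\oint_{\partial R}\frac{g(w)}{\sin^2(\pi(w-x))}\,\mathrm{d}w .
$$
By $\Z$-periodicity of the integrand the two vertical sides of $\partial R$ cancel, so only the integrals over the segments $\im w=\pm(b-5/2)$, $|\re w-x|\leq 1/2$, remain. On those segments Corollary \ref{sublinear} gives $|g(w)|\leq|f(w)|+|w|\leq(C_1+1)|w|$ (using the bound on the closed strip $\overline{B(b-5/2)}$, which is legitimate by continuity of $f$, just as Lemma \ref{second derivative} uses Lemma \ref{derivative bounded} on $\partial B(b-5/2)$), and since $|\re w|\leq 1$ (as $|x|\leq 1/2$) we have $|w|\leq 1+(b-5/2)<b+1$, so $|g(w)|\leq(C_1+1)(b+1)$. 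On the other hand, the estimate $|\sin\zeta|^2\geq e^{2|\im\zeta|}/8$ for $|\im\zeta|\geq\log 2$, established in the proof of Lemma \ref{second derivative}, applies because $b>3$ makes $\pi(b-5/2)>\log 2$, giving $|\sin^2(\pi(u\pm i(b-5/2)-x))|\geq e^{2\pi(b-5/2)}/8=e^{-5\pi}e^{2\pi b}/8$. Inserting both bounds into the integral over an interval of length $1$ yields
$$
|f'(x)-1|\leq \frac{\pi}{2}\cdot 2\cdot\frac{8(C_1+1)(b+1)e^{5\pi}}{e^{2\pi b}}=8\pi e^{5\pi}(C_1+1)(b+1)e^{-2\pi b}=C_2(b+1)e^{-2\pi b}.
$$

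I do not expect a real obstacle, as this is a routine variant of Lemma \ref{second derivative}; the one substantive decision is which meromorphic function to integrate. It is essential to use $g(w)/\sin^2(\pi(w-x))$ rather than, say, $g'(w)\cot(\pi(w-x))$: the latter would require controlling $f'$ (not just $f$) on the far-away horizontal segments, where only the non-decaying bound $|f'|\leq C_1$ of Lemma \ref{derivative bounded} is available, whereas pairing the at-most-linear growth of $g$ against the exponential decay of $\sin^{-2}$ there is exactly what produces the $e^{-2\pi b}$ gain. The only remaining care is cosmetic: choosing the rectangle of height exactly $b-5/2$ so that the resulting constant is literally $C_2$.
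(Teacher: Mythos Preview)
Your proof is correct and follows essentially the same route as the paper: represent $f'(x)-1$ as the residue of $(f(w)-w)/\sin^2(\pi(w-x))$, cancel the vertical sides by periodicity, and bound the remaining horizontal integrals using Corollary \ref{sublinear} together with the exponential decay of $\sin^{-2}$. The only difference is the height of the rectangle: the paper integrates at $|\im w|=b$ and obtains the sharper constant $8\pi(C_1+1)$ before majorizing by $C_2$, whereas you integrate at $|\im w|=b-5/2$, which picks up the extra factor $e^{5\pi}$ and lands exactly on $C_2$; your choice has the virtue that Corollary \ref{sublinear} applies there without any limiting argument.
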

\begin{proof}
It suffices to prove the inequality on the interval $\left[-\frac{1}{2},\frac{1}{2}\right]$ as $f'$ is $\Z$-periodic. Fix $x\in \left[-\frac{1}{2},\frac{1}{2}\right]$ and let $R=\left\{ w \in \Co : |\re w - x|<\frac{1}{2}, |\im w|<b \right\}$. Since $f(w)-w$ has period 1, we have 
\begin{eqnarray*}
f'(x)-1 & = & \frac{\pi}{2 i} \oint_{\partial R} \frac{f(w)-w}{\sin^2(\pi(w-x))}\mathrm{d}w \\
& = & \frac{\pi}{2 i}   \int_{x - \frac{1}{2}}^{x + \frac{1}{2}} \left[\frac{f(u+ib)-(u+ib)}{\sin^2(\pi(u+ib-x))} + \frac{f(u-ib)-(u-ib)}{\sin^2(\pi(u-ib-x))} \right]\mathrm{d}u
\end{eqnarray*}
as in the previous lemma. By Corollary \ref{sublinear}, we have $$|f(u\pm ib)|\leq C_1|u\pm ib| \leq C_1 (b+1)$$ and thus
$$
|f(u\pm ib)-(u\pm ib)|\leq (C_1+1)(b+1).
$$
Combining this with the lower bound $|\sin (\pi(u\pm ib-x))|^2\geq e^{2\pi b}/8$ in the integral yields
$$
|f'(x)-1| \leq 8\pi(C_1+1)(b+1)e^{-2\pi b} \leq C_2 (b+1) e^{-2\pi b}. \qedhere
$$

\end{proof}

A line integral combined with the previous two lemmas yields the bound we need for the difference between $f$ and the identity. 

\begin{lem} \label{degree zero}
The inequality
$$
|f(z)-z|\leq C_2\,\frac{e^{2\pi|y|}+(b+1)^2}{e^{2\pi b}}
$$
holds for all $z \in B(b-3)$.
\end{lem}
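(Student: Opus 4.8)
The plan is to bound $|f(z)-z|$ by integrating $f'-1$ along a path from a point on the real line to $z$, splitting the path into a horizontal segment (where Lemma~\ref{derivative on real line} applies) and a vertical segment (where Lemma~\ref{second derivative} applies after integration). Concretely, fix $z=x+iy\in B(b-3)$ and write
$$
f(z)-z = \bigl(f(x)-x\bigr) + \int_0^y \bigl(\partial_v f(x+iv) - i\bigr)\,i\,\mathrm{d}v = \bigl(f(x)-x\bigr) + \int_0^y \bigl(f'(x+iv)-1\bigr)\,i\,\mathrm{d}v,
$$
using that $f$ is holomorphic so the directional derivative in the imaginary direction is $i f'$. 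The first term is controlled by Corollary~\ref{sublinear} or directly: since $f(0)=0$, integrating $f'-1$ along the real axis from $0$ to $x$ and using Lemma~\ref{derivative on real line} gives $|f(x)-x|\leq |x|\,C_2(b+1)e^{-2\pi b} \leq \tfrac12 C_2(b+1)e^{-2\pi b}$ for $|x|\leq 1/2$ (and $f(x)-x$ is periodic, so this bound holds for all real $x$).

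For the vertical term, the idea is that $f'(x+iv)-1$ is small on the real line and grows in a controlled exponential way as $|v|$ increases, which is exactly what Lemma~\ref{second derivative} quantifies: writing $f'(x+iv) - 1 = (f'(x)-1) + \int_0^v f''(x+is)\,i\,\mathrm{d}s$, we get
$$
|f'(x+iv)-1| \leq |f'(x)-1| + \int_0^{|v|} |f''(x+is)|\,\mathrm{d}s \leq C_2(b+1)e^{-2\pi b} + C_2\int_0^{|v|} e^{2\pi(s-b)}\,\mathrm{d}s \leq C_2(b+1)e^{-2\pi b} + \frac{C_2}{2\pi}\,e^{2\pi(|v|-b)}.
$$
Integrating this over $v$ from $0$ to $y$ and absorbing constants (noting $|y|<b$, so $|y|e^{-2\pi b}\leq (b+1)e^{-2\pi b}$ and $\int_0^{|y|}e^{2\pi(s-b)}\,\mathrm{d}s \leq \tfrac{1}{2\pi}e^{2\pi(|y|-b)}$), one obtains a bound of the shape $\text{const}\cdot\bigl((b+1)^2 + e^{2\pi|y|}\bigr)e^{-2\pi b}$. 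Combining with the horizontal estimate and being generous with the constant $C_2 = 8\pi e^{5\pi}(C_1+1)$ — which was deliberately chosen large in the earlier lemmas precisely to dominate these accumulated factors — yields $|f(z)-z|\leq C_2\,\dfrac{e^{2\pi|y|}+(b+1)^2}{e^{2\pi b}}$.

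The only real obstacle is bookkeeping: verifying that the specific constant $C_2$ genuinely absorbs all the $2\pi$, $8\pi$, $(b+1)$ and $\tfrac{1}{2\pi}$ factors that appear when the horizontal and vertical pieces are added. Since $b>3$ is assumed, there is plenty of slack (e.g. $e^{-2\pi b}$ is tiny and $C_2$ carries a factor $e^{5\pi}$), so this is routine; the structural content is entirely in the path decomposition plus Lemmas~\ref{second derivative} and~\ref{derivative on real line}. One should also double-check the orientation and periodicity reductions (that it suffices to treat $|x|\leq 1/2$, and that $f(z)-z$ being $\Z$-periodic lets us reduce the real part), but these are immediate from $f\in\widetilde{\mathcal{F}_b}$, i.e. $f(z+1)-f(z)=1$.
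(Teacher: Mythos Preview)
Your approach is essentially identical to the paper's: both decompose $f(z)-z$ into a horizontal integral from $0$ to $x$ (controlled by Lemma~\ref{derivative on real line}), the term $iy(f'(x)-1)$ (same lemma), and a double integral of $f''$ along the vertical segment (controlled by Lemma~\ref{second derivative}). The only issue is your final bookkeeping: you bound the horizontal piece by $\tfrac12 C_2(b+1)e^{-2\pi b}$ and separately bound $|y|\,C_2(b+1)e^{-2\pi b}\leq C_2(b+1)^2 e^{-2\pi b}$, then add them, which overshoots the stated constant (for $y=0$ and large $b$ the sum exceeds $C_2(b+1)^2 e^{-2\pi b}$). The fix is to group them as the paper does: $(|x|+|y|)\,C_2(b+1)e^{-2\pi b}\leq C_2(b+1)^2 e^{-2\pi b}$ since $|x|+|y|\leq \tfrac12+(b-3)<b+1$; together with the double-integral bound $\leq C_2 e^{2\pi(|y|-b)}$ this gives exactly the claimed inequality with the fixed constant $C_2$, no ``generosity'' needed.
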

\begin{proof}
By periodicity, we can again assume that $x \in \left[-\frac{1}{2},\frac{1}{2}\right]$, where $z=x+iy$. Then
\begin{eqnarray*}
f(z)-z & = & \int_0^x (f'(s)-1)\mathrm{d}s + \int_x^z (f'(\zeta)-1)\mathrm{d}\zeta \\
& = & \int_0^x (f'(s)-1)\mathrm{d}s  + \int_x^z \left(\int_x^\zeta f''(w)\mathrm{d}w+f'(x)-1\right)\mathrm{d}\zeta \\
& = & \int_0^x (f'(s)-1)\mathrm{d}s  +  iy(f'(x)-1 )+ \int_x^z \int_x^\zeta f''(w)\mathrm{d}w\mathrm{d}\zeta,
\end{eqnarray*}
where the integral paths are taken to be straight line segments.

By Lemma \ref{derivative on real line} we have 
\begin{eqnarray*}
\left|\int_0^x (f'(s)-1)\mathrm{d}s +  iy(f'(x)-1 )\right| &\leq& (|x|+|y|)\max_{s\in \R}|f'(s)-1| \\
& \leq & C_2\,(b+1)^2 e^{-2\pi b},
\end{eqnarray*}
and by Lemma \ref{second derivative} we obtain
\begin{eqnarray*}
\left|\int_x^z \int_x^\zeta f''(w)\mathrm{d}w\mathrm{d}\zeta\right| & \leq & \int_0^y \int_0^t |f''(x+iv)|\ \mathrm{d}v \mathrm{d}t  \\
 & \leq & C_2\,e^{-2\pi b} \int_0^y \int_0^{t} e^{2\pi |v|}\ \mathrm{d}v \mathrm{d}t  \\
 & \leq &  C_2\,e^{2\pi(|y|-b)}.
\end{eqnarray*}
Adding these two yields the result.
\end{proof}

The upper bound in Lemma \ref{degree zero} goes to zero as $b-|y|$ goes to infinity. Therefore, if $c$ is chosen large enough, $|f(z)-z|$ is as small as we wish where $b-|y| > c$, that is, on $B(b-c)$. Also, since the projection map $p : \Co \to \Co/\Z$ is a local isometry, it does not increase distances and the result holds for the class $\mathcal{F}_b$ as well. This completes the proof of Theorem \ref{long cylinders cant twist}.

\section{Convergence of twist coordinates in the thin part} \label{the proof}

Fix some curve $\alpha \in E$. We denote by $\eta_t^+$ and $\eta_t^-$ the seams orthogonal to $\alpha_t^*$ used to measure the half-twist $\theta^\alpha(S_t)$, and by $\eta^+$ and $\eta^-$ the corresponding seams in $S_\infty$. The geometric convergence $S_t \to S_\infty$ implies that $\eta_t^\pm \to \eta^\pm$ on compact sets.

\begin{lem} \label{orthocompact}
Let $K \subset S_\infty$ be compact. Then $g_t(\eta_t^\pm \cap g_t^{-1}(K)) \to \eta^\pm \cap K$ as $t \to \infty^E$.
\end{lem}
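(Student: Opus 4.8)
The plan is to realize both $\eta_t^\pm$ and $\eta^\pm$ as orthogeodesics between translation axes in the universal cover $\D$, and then invoke the algebraic convergence of the deck groups together with the local convergence $\widetilde{g_t} \to \id$ established in Lemma \ref{liftsconverge}. This is essentially the same mechanism used in the proof of Proposition \ref{convergenceFN}, but applied to a curve $\alpha \in E$ rather than a curve outside $E$; the key difference is that the geodesic $\alpha_t^*$ is no longer the axis of an element coming from $\pi_1(S_\infty)$, since $\alpha$ becomes a pair of cusps. We work around this using the half-infinite cylinder coordinates on $S_\infty$.

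First I would set up the picture upstairs. Fix a basepoint and the covering maps $p_t : \D \to S_t$ and $p_\infty : \D \to S_\infty$ as in section \ref{sectiongc}, together with the lifts $\widetilde{g_t} : \widetilde X_t \to \D$. The seam $\eta_t^\pm$ runs from $\alpha_t^*$ to either a closed geodesic or a cusp at its far end; pick a loop $\sigma_t^\pm$ whose class represents that far end, and a loop whose class, call it $a_t$, is the core $\alpha_t$. Lift to $\D$: the seam $\eta_t^\pm$ lifts to the orthogeodesic $\nu_t^\pm \cap \D$ between $\axis(\Theta_t([a_t]))$ and $\axis(\Theta_t([\sigma_t^\pm]))$ in $\overline\D$, where for parabolic elements $\axis$ means the fixed point on $\partial\D$. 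On the $S_\infty$ side, the corresponding limiting seam $\eta^\pm$ lifts to the orthogeodesic $\nu_\infty^\pm$ between $\axis(\Phi_\cdot([a])) $ --- here $a$ is the cusp, so this axis is a single boundary point --- and $\axis(\Theta_\infty([\sigma_\infty^\pm]))$. By Proposition \ref{Chabauty convergence}, $\Theta_t([\sigma_t^\pm]) \to \Theta_\infty([\sigma_\infty^\pm])$ and, crucially, the elements representing $a_t$ (which are parabolic in the limit) converge to the parabolic fixing the cusp point; hence their axes converge in $\overline\D$ in the sense that degenerate or non-degenerate geodesics converge to a possibly degenerate geodesic. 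Since the orthogeodesic between two disjoint (possibly degenerate) geodesics in $\overline\D$ depends continuously on the pair, we get $\nu_t^\pm \to \nu_\infty^\pm$ locally uniformly as subsets of $\overline\D$, and therefore $\eta_t^\pm \to \eta^\pm$ uniformly on compact subsets of $S_\infty$ after pushing down and comparing via $\widetilde{g_t} \to \id$ (Lemma \ref{liftsconverge}).

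To extract the precise statement about $g_t(\eta_t^\pm \cap g_t^{-1}(K))$, I would argue as follows. Given a compact $K \subset S_\infty$, enlarge it slightly to a compact $K'$ with $K$ in the interior of $K'$. For $t$ large, $K' \subset g_t(X_t)$ by the exhaustion statement after Lemma \ref{exhaustion}, so $g_t^{-1}(K')$ makes sense and lies in $X_t$. Lift $K'$ to a compact $\widetilde{K'} \subset \D$ (a finite union of translates covering it). On $\widetilde{K'}$ the maps $\widetilde{g_t}$ and $\widetilde{g_t}^{-1}$ converge uniformly to the identity, and by the previous paragraph the lifted seams $\nu_t^\pm$ converge uniformly to $\nu_\infty^\pm$ on $\widetilde{K'}$. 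Combining these two convergences, the set $\widetilde{g_t}(\nu_t^\pm \cap \widetilde{g_t}^{-1}(\widetilde{K'}))$ converges (in the Hausdorff sense) to $\nu_\infty^\pm \cap \widetilde{K'}$; projecting by $p_\infty$ and intersecting with $K$ gives $g_t(\eta_t^\pm \cap g_t^{-1}(K)) \to \eta^\pm \cap K$, using that $p_\infty$ is a local homeomorphism and that the part of the seam meeting $K$ stays away from $\partial K'$ for large $t$.

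The main obstacle is the degeneration of the axis of $\alpha_t^*$: one must make sense of "the orthogeodesic from $\alpha_t^*$ to the far end converges to the orthogeodesic from the cusp of $\alpha$ to the far end" when the first geodesic is shrinking to a point on $\partial\D$. The cleanest way to handle this is to note that $\axis(\Theta_t([a_t]))$, with both endpoints on $\partial\D$, converges to the single parabolic fixed point (both endpoints collapse together), which is exactly the degenerate limit allowed in the continuity statement for orthogeodesics invoked in Proposition \ref{convergenceFN}; the convergence $\Theta_t([a_t]) \to$ parabolic follows from Proposition \ref{Chabauty convergence} applied to the loop class $a$, since $a$ lies in $\pi_1(S\setminus E) \subset \pi_1(S_\infty)$ and its image under $\Phi_t = \Theta_t \circ i_* \circ (g_t)_*^{-1}$ is represented by $\alpha_t$ in $S_t$. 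Once this degeneration is absorbed into the already-cited continuity of the orthogeodesic map, the rest is the routine normal-families bookkeeping above.
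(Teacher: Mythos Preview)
Your argument is correct and follows essentially the same route as the paper: lift the seams to orthogeodesics $\nu_t^\pm$ in $\D$, use Proposition \ref{Chabauty convergence} to get convergence of the relevant axes (including the degeneration of $\axis(\Theta_t([a_t]))$ to the parabolic fixed point of the cusp), invoke continuity of the orthogeodesic map to obtain $\nu_t^\pm \to \nu_\infty^\pm$, combine with $\widetilde{g_t}\to\id$ from Lemma \ref{liftsconverge}, and project. The paper compresses all of this into the phrase ``as in the proof of Proposition \ref{convergenceFN}'' and simply chooses a compact $L\subset\D$ with $K\subset p_\infty(L)$ rather than your enlarged $K'$, but the substance is identical; your explicit treatment of the hyperbolic-to-parabolic degeneration is exactly the point the paper is silently absorbing into that cross-reference.
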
 
\begin{proof}
Let $L \subset \D$ be a compact set such that $K \subset p_\infty(L)$. As in the proof of  Proposition \ref{convergenceFN}, there are lifts $\nu_t^\pm$ of $\eta_t^\pm$ and $\nu^\pm$ of $\eta^\pm$ such that $\nu_t^\pm \to \nu^\pm$ as $t \to \infty^E$. Moreover, by Lemma \ref{liftsconverge}, the maps $\widetilde{g_t}$ and $\widetilde{g_t}^{-1}$ converge to the identity on compact sets. Therefore, we have $\widetilde{g_t}(\nu_t^\pm \cap \widetilde{g_t}^{-1}(L)) \to \nu^\pm \cap L$ as $t \to \infty^E$. The result follows by projecting everything down to $S_\infty$
\end{proof}

We can now conclude with the proof of our main result, which is that $\theta^\alpha(S_t)$ converges to $ \theta_\infty^+ - \theta_\infty^-$ as $t \to \infty^E$.

\begin{proof}[Proof of Theorem \ref{precise twists}]
Let $\varepsilon>0$. We choose a buffer region $B^-=\Sp^1 \times [0,c]$ inside the grafted cylinder $C^-=\Sp^1 \times [0,+\infty)$ in $S^\infty$, and similarly $B^+ = \Sp^1 \times [-c,0]$ in $C^+ = \Sp^1 \times (-\infty,0]$, where $c>0$ is chosen in terms of $\varepsilon$. We chose $c$ large enough so that two things hold. First, we want that the angle coordinate $\theta^\pm(s)$ of $\eta^\pm(s)$ is within $\varepsilon$ of its limiting value $\theta_\infty^\pm$ whenever $\eta^\pm(s)$ is past $B^\pm$ into the cusp. This is possible by Lemma \ref{lem:asymangle}. Let $B_t^\pm = g_t^{-1}(B^\pm)$ be the top and bottom parts of $C_t^\alpha$ of height $c$ each. Let $\psi_t :A_t\to S_t$ be the covering annulus corresponding to the curve $\alpha_t$. There is a conformal embedding $s_t : C_t^\alpha \to A_t$ such that $\psi_t \circ s_t$ is the inclusion map $C_t^\alpha \hookrightarrow S_t$. We use the euclidean metric on $A_t$ instead of the hyperbolic metric, normalized so that $A_t$ has circumference $1$ like $C_t^\alpha$. Then we can think of $A_t$ as a subset of $\Co/\Z$. By Theorem \ref{long cylinders cant twist}, we can choose $c$ such that the restriction of $s_t$ to $C_t^\alpha \setminus B_t^+ \cup B_t^-$ is $\varepsilon$ close to an isometry. 

\begin{figure}[htbp]
\centering
\includegraphics[scale=.8]{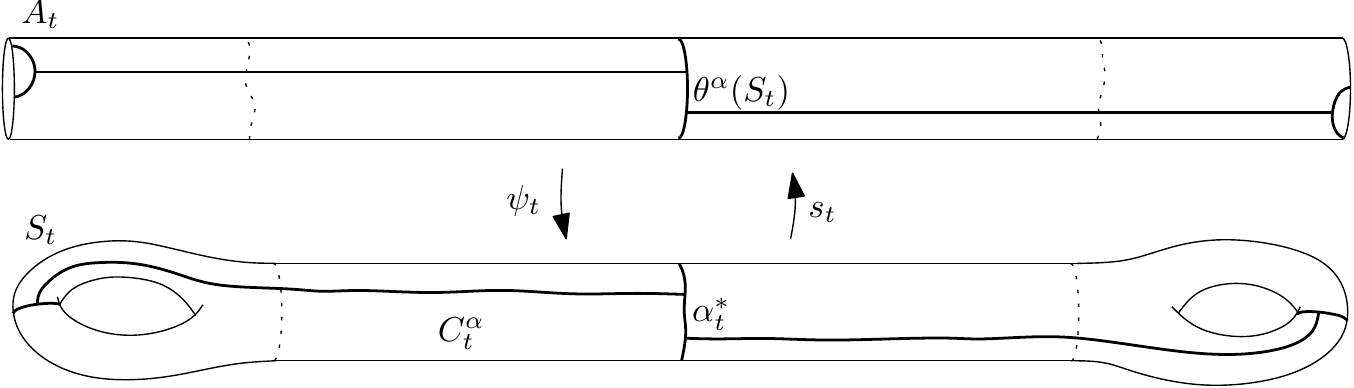}
\caption{The covering annulus $A_t \to S_t$ corresponding to $\alpha_t$.}
\end{figure}

By Lemma \ref{outside collar} and Lemma \ref{straightsubcylinder}, there exists some $T$ such that if $t\geq T$, then the geodesic $\alpha_t^*$ is between $B_t^-$ and $B_t^+$. Let $b_t^\pm$ be the last point of intersection of $\eta_t^\pm$ with $B_t^\pm$, and let $b^\pm$ be the last point of intersection of $\eta^\pm$ with $B^\pm$. By Lemma \ref{orthocompact}, we have $g_t(b_t^\pm) \to b^\pm$ as $t \to \infty^E$. Thus we can choose $T$ large enough so that if $t \geq T$, the distance between $g_t(b_t^\pm)$ and $b^\pm$ is at most $\varepsilon$.

We can now show that if $t \geq T$, then $\theta^\alpha(S_t)$ is within $6\varepsilon$ of $\theta_\infty^+ - \theta_\infty^-$. Let $z_t^\pm$ denote the feet of $\eta_t^\pm$ on $\alpha_t^*$. The half-twist $\theta^\alpha(S_t)$ is equal to the angular distance from $s_t(z_t^-)$ to $s_t(z_t^+)$ in $A_t$. In turn, this is also equal to the angular distance from $s_t(b_t^-)$ to $s_t(b_t^+)$ since the seams $s_t(\eta_t^-)$ and $s_t(\eta_t^+)$ are longitudinal in $A_t$. As $b_t^\pm$ is on the boundary of $C_t^\alpha \setminus B_t^+ \cup B_t^-$ on which $s_t$ is $\varepsilon$-close to an isometry, the latter distance differs by at most $2\varepsilon$ from the angular distance from $b_t^-$ to $b_t^+$ in $C_t^\alpha$. The angular coordinate of $b_t^\pm$ in $C_t^\alpha$ is equal to the angular coordinate of $g_t(b_t^\pm)$ in $C^\pm$, and the latter differs from the angular coordinate of $b^\pm$ by at most $\varepsilon$. Thus $\theta^\alpha_t$ is within $4 \varepsilon$ of the angular distance from $b^-$ to $b^+$. Finally, recall that the angular coordinate of $\eta^\pm$ does not change by more than $\varepsilon$ past the point $b^\pm$, so that the difference $\theta_\infty^+ - \theta_\infty^-$ between the limiting angular coordinates of $\eta^+$ and $\eta^-$ is at most $6 \varepsilon$ away from $\theta^\alpha(S_t)$. 
\end{proof}

This also proves Theorem \ref{twists converge}, as we observed in Section 2.

\begin{acknowledgements}
The author thanks Jeremy Kahn for suggesting this problem and an outline of its solution. Thanks to Joe Adams and Chandrika Sadanand for listening to some parts of the proof, and to Paul Carter for reading an early draft.
\end{acknowledgements}

\bibliographystyle{amsalpha}

\end{document}